\pgfplotsset{compat=1.15}
\DeclareMathOperator{\Aut}{Aut}
\DeclareMathOperator{\spn}{span}
\DeclareMathOperator{\Ann}{Ann}
\theoremstyle{plain}
\newtheorem{thrm}{Theorem}[section]
\newtheorem{cor}[thrm]{Corollary}
\newtheorem{prop}[thrm]{Proposition}
\newtheorem{lem}[thrm]{Lemma}
\theoremstyle{definition}
\newtheorem{rem}[thrm]{Remark}
\crefname{thrm}{Theorem}{Theorems}
\crefname{theorem}{Theorem}{Theorems}
\crefname{lem}{Lemma}{Lemmas}
\crefname{cor}{Corollary}{Corollaries}
\crefname{prop}{Proposition}{Propositions}
\crefname{defn}{Definition}{Definitions}
\crefname{exm}{Example}{Examples}
\crefname{rem}{Remark}{Remarks}
\crefname{conj}{Conjecture}{Conjectures}
\crefname{quest}{Question}{Questions}
\crefname{section}{Section}{Sections}
\crefname{equation}{\unskip}{\unskip}
\crefname{enumi}{\unskip}{\unskip}
\crefname{subsection}{Subsection}{Subsections}
\renewcommand{\iff}{\Leftrightarrow}
\newcommand{\impl}{\Rightarrow}
\newcommand{\af}{\alpha}
\newcommand{\bt}{\beta}
\newcommand{\gm}{\gamma}
\newcommand{\dl}{\delta}
\newcommand{\lb}{\lambda}
\newcommand{\sg}{\sigma}
\newcommand{\vf}{\varphi}
\newcommand{\A}{\mathcal{A}}
\newcommand{\M}{\mathbf{M}}
\newcommand{\I}{\mathbf{I}}
\newcommand{\T}{\mathbf{T}}
\newcommand{\sst}{\subseteq}
\newcommand{\id}{\mathrm{id}}
\newcommand{\m}{{}^{-1}}
\begin{document}
	\title[$\sg$-matching and interchangeable structures on $UT_n(K)$]{$\sg$-matching and interchangeable structures\\ on the strictly upper triangular matrix algebra}	

\author{Mykola Khrypchenko}
\address{Departamento de Matem\'atica, Universidade Federal de Santa Catarina,  Campus Reitor Jo\~ao David Ferreira Lima, Florian\'opolis, SC, CEP: 88040--900, Brazil}
\email[Corresponding author]{nskhripchenko@gmail.com}

\subjclass[2020]{Primary: 16W99, 16S50; secondary: 16N40}
\keywords{compatible structures; $\sg$-matching structures; interchangeable structures; totally compatible structures; strictly upper triangular matrix algebra}

\begin{abstract}
	We describe $\sg$-matching, interchangeable and, as a consequence, totally compatible structures on the strictly upper triangular matrix algebra $UT_n(K)$ for all $n\ge 3$.
\end{abstract}

\maketitle


	\section*{Introduction}	
	
	Given two structures from a same variety of algebras $\A$ (e.g. associative, Lie, Leibniz algebras) defined on a same vector space, in general their sum does not necessarily belong to $\A$. If it does, such structures are called \textit{compatible}.
	The notion of compatibility goes back to old papers in mathematical physics~\cite{Magri78,Reyman-Semenov89,Bolsinov91}, in which context it has been used for quite a while~\cite{Carinena-Grabowski-Marmo2000,Bolsinov-Borisov02,Golubchik-Sokolov02,Golubchik-Sokolov05,Odesskii-Sokolov06}. More precisely,~\cite{Carinena-Grabowski-Marmo2000,Odesskii-Sokolov06} dealt with compatible associative algebras, while~\cite{Golubchik-Sokolov02,Golubchik-Sokolov05} studied compatible Lie algebras. Since then, the interest in compatible algebras has gone beyond mathematical physics: thus, there appeared compatible Leibniz algebras~\cite{Makhlouf-Saha23}, pre-Lie algebras~\cite{Abdelwahab-Kaygorodov-Makhlouf24}, anti-pre-Lie algebras~\cite{Normatov24}, Hom-Lie algebras~\cite{Das23CompHomLie} and Hom-Lie triple systems~\cite{Teng-Long-Zhang-Lin23} and so on. For the algebraic classification of nilpotent compatible algebras of small dimensions see~\cite{Abdelwahab-Kaygorodov-Makhlouf24,Ladra-Cunha-Lopes24}.
	  
	The compatibility of a pair of bilinear associative products $\cdot_1$ and $\cdot_2$ on a same vector space $V$ is equivalent to the following identity on $V$:
	 \begin{align}\label{compatible-ass-identity}
	 	(a\cdot_1 b)\cdot_2 c + (a\cdot_2 b)\cdot_1 c = a\cdot_1(b\cdot_2 c) + a\cdot_2(b\cdot_1 c).
	 \end{align}
	 Such structures first appeared (under the name ``quantum bi-Hamiltonian systems'') in~\cite{Carinena-Grabowski-Marmo2000}. Given an associative algebra $(A,\cdot)$, it is natural to ask: what are the associative products on $A$ that are compatible with $\cdot$? In general, one can point out some specific classes of such products (see, for example,~\cite{Carinena-Grabowski-Marmo2000}), but their full description seems to be a difficult task. For instance, when $A$ is the matrix algebra, such a description is given in~\cite{Odesskii-Sokolov06}. 
	 
	 However, there are several particular cases of \cref{compatible-ass-identity} based on equalities of pairs monomials. If the $i$-th monomial of the left hand side of \cref{compatible-ass-identity} equals the $i$-th monomial of the right hand side of \cref{compatible-ass-identity} for all $i\in\{1,2\}$, then the products $\cdot_1$ and $\cdot_2$ are said to be \textit{matching} (in this case $(V,\cdot_1,\cdot_2)$ is a so-called \textit{matching dialgebra}~\cite{zbg12}, also known as \textit{$As^{(2)}$-algebra}~\cite{Zinbiel}). In~\cite{zgg23}, this notion was generalized to \textit{$\sg$-matching} products, where $\sg$ is a permutation of $\{1,2\}$ (so that the $i$-th monomial of the left hand side of \cref{compatible-ass-identity} equals the $\sg(i)$-th monomial of the right hand side of \cref{compatible-ass-identity}). If all the $4$ monomials in \cref{compatible-ass-identity} are equal, then $\cdot_1$ and $\cdot_2$ are \textit{totally compatible} (see~\cite{zbg13,Zhang13}). In~\cite{Khr2024}, we also studied one more ``compatibility-like'' condition, called \textit{interchangeability}, whose idea comes from~\cite{Strohmayer08} and which means that one can permute the operations $\cdot_1$ and $\cdot_2$ in each of the monomials in \cref{compatible-ass-identity} without changing the brackets. It turns out that, whenever $A$ is unital, $\sg$-matching, interchangeable and totally compatible structures on $A$ admit easy descriptions~\cite{Khr2024}, so it makes sense to study them on algebras that are far from being unital, for example, on nilpotent ones.
	
	In this paper we characterize $\sg$-matching, interchangeable and totally compatible structures on the strictly upper triangular matrix algebra $UT_n(K)$, $n\ge 3$, which is a classical example of a nilpotent associative algebra. In \cref{sec-prelim} we recall the definitions of different types of compatibility, give examples and discuss isomorphic and antiisomorphic structures on an arbitrary associative algebra $A$. We also give some technical details on $UT_n(K)$ that will be used below. \cref{sec-id-match,sec-(12)-match,sec-interchangeable} are devoted to $\id$-matching, $(12)$-matching and interchangeable structures on $UT_n(K)$, respectively. Each of these sections has a theorem that describes the corresponding structure as a linear combination of some specific products on $UT_n(K)$ (these are \cref{id-matching-UT_n-full-descr,(12)-matching-UT_n-full-descr,interchangeable-UT_n-descr}, respectively) and a subsection, where we apply the theorem to give a complete list of pairwise non-isomorphic structures on $UT_3(K)$ (see \cref{id-matching-UT_3-full-descr,(12)-matching-UT_3-full-descr,interchangeable-UT_3-full-descr}). As a consequence, in \cref{sec-totally-comp} we obtain a description of totally compatible structures on $UT_n(K)$.
	\section{Definitions and preliminaries}\label{sec-prelim}
	
	In what follows $K$ will be an arbitrary field. All the algebras and vector spaces will be over $K$, all the maps between them will be $K$-linear and all the products on them will be $K$-bilinear. 
	
	\subsection{Matching compatibilities and interchangeability}
	
		Recall from \cite[Definition 1.1]{Khr2024} (based on~\cite{zgg23}) that 
		two associative products $\cdot_1$ and $\cdot_2$ on a same vector space $V$ are 
			\begin{enumerate}
				\item \textit{$\sg$-matching} (where $\sg\in S_2=\{\id,(12)\}$), if
				\begin{align}\label{matching-ass-identity}
					(a\cdot_1 b)\cdot_2c = a\cdot_{\sg(1)}(b\cdot_{\sg(2)}c) \text{ and } (a\cdot_2b)\cdot_1
					 c = a\cdot_{\sg(2)}(b\cdot_{\sg(1)}c),
				\end{align}
				\item \textit{totally compatible}, if 
				\begin{align}\label{totally-comp-ass-identity}
					(a\cdot_1 b)\cdot_2 c = (a\cdot_2 b)\cdot_1 c = a\cdot_1 (b\cdot_2 c) = a\cdot_2 (b\cdot_1 c),
				\end{align}
			\end{enumerate}
			for all $a,b,c\in V$. 
			
			 Furthermore, two (not necessarily associative) products $\cdot_1$ and $\cdot_2$ on $V$ are said to be \textit{interchangeable} if
		\begin{align*}
			(a\cdot_1 b)\cdot_2c = (a\cdot_2 b)\cdot_1 c \text{ and } a\cdot_1(b\cdot_2 c) = a\cdot_2 (b\cdot_1 c).
		\end{align*}
	It is clear that $\cdot_1$ and $\cdot_2$ are totally compatible $\iff$ $\cdot_1$ and $\cdot_2$ are $\sg$-matching for all $\sg\in S_2$ $\iff$ $\cdot_1$ and $\cdot_2$ are interchangeable and $\sg$-matching for some $\sg\in S_2$.

Given an associative algebra $(A,\cdot)$, by a \textit{$\sg$-matching} (resp. \textit{interchangeable} or \textit{totally compatible}) \textit{structure} on $A$ we mean an associative product $*$ on $A$ that is $\sg$-matching (resp. interchangeable or totally compatible) with $\cdot$. It is obvious that $\cdot$ itself is totally compatible with $\cdot$. More generally, the \textit{mutation}~\cite{ElduqueMyung} $\cdot_x$ of $\cdot$ by an arbitrary $x\in A$ (i.e., $a\cdot_x b=a\cdot x\cdot b$) is $\id$-matching with $\cdot$. If, moreover, $x$ is central (more generally, belongs to the \textit{centroid} of $A$), then $\cdot_x$ is totally compatible with $\cdot$. For \textit{unital} algebras $A$, all the $\id$-matching structures on $A$ are mutations of $\cdot$ (see \cite[Proposition 2.2]{Khr2024}), and all the $(12)$-matching or interchangeable structures on $A$ are mutations of $\cdot$ by central elements of $A$ (see \cite[Proposition 2.6]{Khr2024}). In particular, any $(12)$-matching or interchangeable structure on a unital algebra $(A,\cdot)$ is always totally compatible with $\cdot$. If $A$ is non-unital (for example, if $A$ is \textit{nilpotent}), all these matching compatibilities may be different as it was shown in~\cite[Examples 1.4 and 1.5]{Khr2024}. 

In order to give another general example of totally compatible structures (which makes sense, in particular, for nilpotent algebras), recall some notations. Let $(A,\cdot)$ be a (not necessarily associative) algebra. The \textit{annihilator} of $A$ is the ideal $\Ann(A)=\{a\in A: a\cdot b=b\cdot a=0\text{ for all }b\in A\}$. As usual, we denote by $A^2$ the subspace $\spn_K\{a\cdot b: a,b\in A\}$.
\begin{rem}
	Let $(A,\cdot)$ be an algebra and $V$ be a vector space complement of $A^2$. Then any bilinear map $\mu:V\times V\to\Ann(A)$ extends to the product $*$ on $A$ given by
	\begin{align}\label{annihilator-valued-structure}
		(v_1+a_1)*(v_2+a_2)=\mu(v_1,v_2),
	\end{align}
	where $v_i\in V$, $a_i\in A^2$, $i=1,2$. Moreover, $(a\cdot b)* c = (a*b)\cdot c = a\cdot (b*c) = a*(b\cdot c)=0$. In particular, $\cdot$ and $*$ are totally compatible, whenever they are associative.
\end{rem}

\subsection{Isomorphic and antiisomorphic structures}
Let $*_1$ and $*_2$ be two $\sg$-matching, interchangeable or totally compatible structures  on $(A,\cdot)$. We say that $*_1$ and $*_2$ are \textit{isomorphic}, if there exists $\phi\in\Aut(A,\cdot)$ such that 
\begin{align*}
	\phi(a*_1b)=\phi(a)*_2\phi(b)
\end{align*}
for all $a,b\in A$ (this means that $(A,\cdot,*_1)$ and $(A,\cdot,*_2)$ are isomorphic as algebras with two multiplications). In particular, given a $\sg$-matching (resp. interchangeable or totally compatible) structure $*$ on $(A,\cdot)$ and $\phi\in\Aut(A,\cdot)$, one can define the new product $\star$ on $A$ by means of
\begin{align}\label{a-star-b=phi(phi-inv(a)*phi-inv(b))}
	a\star b=\phi(\phi\m(a)*\phi\m(b))
\end{align}
for all $a,b\in A$. Clearly, $\star$ is also a $\sg$-matching (resp. interchangeable or totally compatible) structure on $(A,\cdot)$ that is isomorphic to $*$.

An antiautomorphism of $(A,\cdot)$ also induces a new structure on $A$ of the same matching type, which is, however, not isomorphic to $*$.\footnote{More precisely, the resulting algebras $(A,\cdot,*_1)$ and $(A,\cdot,*_2)$ are antiisomorphic.}
\begin{lem}\label{antiauto=>structure-of-the-same-type}
	Let $(A,\cdot)$ be an algebra and $\vf$ be an antiautomorphism of $(A,\cdot)$. Given a product $*$ on $A$, let $\star$ be the new product on $A$ given by 
	\begin{align}\label{a-star-b=phi(phi-inv(b)*phi-inv(a))}
		a\star b=\phi(\phi\m(b)*\phi\m(a))
	\end{align}
	for all $a,b\in A$.
	\begin{enumerate}
		\item\label{star-sg-matching} If $*$ satisfies \cref{matching-ass-identity} with some $\sg\in S_2$, then so does $\star$.
		\item\label{star-interchangeable} If $*$ is interchangeable with $\cdot$, then so is $\star$.
		\item\label{star-associative} If $*$ is associative, then so is $\star$.
	\end{enumerate}
\end{lem}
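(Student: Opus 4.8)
The plan is to verify each of the three assertions by a direct substitution of the definition (\ref{a-star-b=phi(phi-inv(b)*phi-inv(a))}) into the relevant identity, using that $\vf$ is a \emph{bijective} anti\-homomorphism so that $\vf(a\cdot b)=\vf(b)\cdot\vf(a)$ and $\vf$ commutes with nothing but reverses the order of $\cdot$-products. The key bookkeeping device I would set up first is a clean formula for an arbitrary length-three $\star$-monomial in terms of $*$. Writing $x=\vf\m(a)$, $y=\vf\m(b)$, $z=\vf\m(c)$, one computes from (\ref{a-star-b=phi(phi-inv(b)*phi-inv(a))}) that
\begin{align*}
	(a\star b)\star c &= \vf\bigl(z*\vf\m(a\star b)\bigr)=\vf\bigl(z*(y*x)\bigr),\\
	a\star(b\star c) &= \vf\bigl(\vf\m(b\star c)*x\bigr)=\vf\bigl((z*y)*x\bigr).
\end{align*}
So $\vf$ converts each \emph{left-normed} $\star$-monomial into the $\vf$-image of the corresponding \emph{right-normed} $*$-monomial (with the three arguments in reversed order), and vice versa. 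This single observation is the engine behind all three parts, and I would state it as the opening step.

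For part (\ref{star-associative}) the associativity of $*$ gives $z*(y*x)=(z*y)*x$, whence the two displayed expressions coincide and $\star$ is associative. For part (\ref{star-sg-matching}), I would carry out the analogous computation mixing $\star_1:=\star$ built from $*_1:=*$ and the fixed product $\cdot$; the point is that the anti\-automorphism $\vf$ turns $\cdot$ into $\cdot$ with reversed order just as it turns $*$ into $\star$, so a $\star$-monomial of the form $(a\star_i b)\cdot c$ becomes $\vf$ of a $*$-monomial $c\,\cdot_{?}(b*_{?}a)$ of the \emph{opposite} bracketing and argument order. Feeding the hypothesis (\ref{matching-ass-identity}) for $*$ into these reversed monomials yields exactly (\ref{matching-ass-identity}) for $\star$, possibly after relabelling $a\leftrightarrow c$; here one must check that the permutation $\sg$ is preserved and not composed with $(12)$, which is the one genuinely delicate bit of index-tracking. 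Part (\ref{star-interchangeable}) is then handled by the same reversal applied to both identities defining interchangeability, and since interchangeability is symmetric in the two factors of each product, the reversed identities are again precisely the interchangeability conditions for $\star$.

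The main obstacle I anticipate is purely the combinatorics of keeping the two subscripts ($1$ for $\star$ versus the unlabelled $\cdot$) and the bracketing straight under the order-reversal, so that the permutation $\sg$ comes out unchanged in part (\ref{star-sg-matching}); it is easy to accidentally introduce a spurious transposition. I would mitigate this by fixing the reversal formula once and for all at the start and then applying it mechanically to each monomial, rather than re-deriving it inside each of the two equations of (\ref{matching-ass-identity}). Everything else is routine: each claim reduces to applying associativity, the $\sg$-matching identity, or the interchangeability identity for $*$ to a single reversed monomial and then re-expressing the result via $\vf$.
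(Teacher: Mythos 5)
Your proposal is correct and takes essentially the same route as the paper: a direct substitution of \cref{a-star-b=phi(phi-inv(b)*phi-inv(a))} using the antihomomorphism property of $\phi$, which is precisely how the paper's proof begins (it first computes $(a\cdot b)\star c$ and $(a\star b)\cdot c$ in terms of $*$, then applies the relevant identity for $*$ in each case, your ``reversal dictionary'' in slightly less systematic form). The index-tracking you flag as delicate does work out exactly as you predict: under the reversal the two identities of \cref{matching-ass-identity} swap with each other when $\sg=\id$ and each maps to itself when $\sg=(12)$, so $\sg$ is preserved in both cases.
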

\begin{proof}
	Let $a,b,c\in A$. We first calculate
	\begin{align*}
		(a\cdot b)\star c&=\phi(\phi\m(c)*\phi\m(a\cdot b))=\phi(\phi\m(c)*(\phi\m(b)\cdot\phi\m(a))),\\
		(a\star b)\cdot c&=\phi(\phi\m(b)*\phi\m(a))\cdot \phi(\phi\m(c))=\phi(\phi\m(c)\cdot(\phi\m(b)*\phi\m(a))).
	\end{align*}
	
	\textit{\cref{star-sg-matching}}. Assume that $*$ satisfies \cref{matching-ass-identity} with $\sg=\id$. Then
	\begin{align*}
		\phi(\phi\m(c)*(\phi\m(b)\cdot\phi\m(a)))
		&=\phi((\phi\m(c)*\phi\m(b))\cdot\phi\m(a))
		=\phi(\phi\m(a))\cdot\phi(\phi\m(c)*\phi\m(b))=a\cdot(b\star c),\\
		\phi(\phi\m(c)\cdot(\phi\m(b)*\phi\m(a)))
		&=\phi((\phi\m(c)\cdot\phi\m(b))*\phi\m(a))
		=\phi(\phi\m(b\cdot c)*\phi\m(a))=a\star(b\cdot c).
	\end{align*}
	Similarly, if $*$ satisfies \cref{matching-ass-identity} with $\sg=(12)$, then
	\begin{align*}
		\phi(\phi\m(c)*(\phi\m(b)\cdot\phi\m(a)))
		&=\phi((\phi\m(c)\cdot\phi\m(b))*\phi\m(a))
		=\phi(\phi\m(b\cdot c)*\phi\m(a))=a\star(b\cdot c),\\
		\phi(\phi\m(c)\cdot(\phi\m(b)*\phi\m(a)))
		&=\phi((\phi\m(c)*\phi\m(b))\cdot\phi\m(a))
		=\phi(\phi\m(a))\cdot\phi(\phi\m(c)*\phi\m(b))=a\cdot(b\star c).
	\end{align*}
	
	\textit{\cref{star-interchangeable}}. Assume that $*$ is interchangeable with $\cdot$. Then
	\begin{align*}
		\phi(\phi\m(c)*(\phi\m(b)\cdot\phi\m(a)))
		&=\phi(\phi\m(c)\cdot(\phi\m(b)*\phi\m(a)))
		=\phi(\phi\m(b)*\phi\m(a))\cdot\phi(\phi\m(c))=(a\star b)\cdot c,\\
		\phi(\phi\m(c)\cdot(\phi\m(b)*\phi\m(a)))
		&=\phi(\phi\m(c)*(\phi\m(b)\cdot\phi\m(a)))
		=\phi(\phi\m(c)*\phi\m(a\cdot b))=(a\cdot b)\star c.
	\end{align*}
	
	\textit{\cref{star-associative}}. Assume that $*$ is associative. Then
	\begin{align*}
		(a\star b)\star c&=\phi(\phi\m(c)*\phi\m(a\star b))=\phi(\phi\m(c)*(\phi\m(b)*\phi\m(a)))\\
		&=\phi((\phi\m(c)*\phi\m(b))*\phi\m(a))=\phi(\phi\m(b\star c)*\phi\m(a))=a\star(b\star c).\qedhere
	\end{align*}
\end{proof}

\subsection{The algebras $T_n(K)$ and $UT_n(K)$}

Let $(T_n(K),\cdot)$ be the algebra of $n\times n$ upper triangular matrices over $K$. As a $K$-space, $T_n(K)$ admits the canonical basis consisting of the matrix units $e_{ij}$ (denoted also $e_{i,j}$ when needed), $1\le i\le j\le n$, where $e_{ij}\cdot e_{kl}=\dl_{jk}e_{il}$. Given $a\in T_n(K)$, we write $a=\sum_{i\le j}a(i,j)e_{ij}$ and call $a(i,j)$ the \textit{value} of $a$ at $(i,j)$. The following straightforward formulas will be used throughout the text without any reference: 
\begin{align*}
	e_{kl}\cdot a=\sum_{j=l}^{n}a(l,j)e_{kj}\text{ and }a\cdot e_{kl}=\sum_{i=1}^{k}a(i,k)e_{il}.
\end{align*}
When needed, we will write $e_i$ for $e_{ii}$, $1\le i\le n$.

Denote by $UT_n(K)$ the ideal of $T_n(K)$ consisting of strictly upper triangular matrices. Thus, $UT_n(K)$ is a nilpotent associative algebra. Clearly, 
\begin{align*}
	UT_n(K)=\spn_K\{e_{ij}: 1\le i<j\le n\}.
\end{align*}
As a $K$-algebra, $UT_n(K)$ is generated by $\{e_{i,i+1}:  1\le i<n\}$.

\begin{rem}
	We have $\Ann(UT_n(K))=\spn_K\{e_{1n}\}$ and $UT_n(K)^2=\spn_K\{e_{ij}: 2\le i+1<j\le n\}$.
\end{rem}

 The following obvious involution of $UT_n(K)$ in combination with \cref{antiauto=>structure-of-the-same-type} will permit us to reduce a lot of calculations in \cref{sec-(12)-match,sec-interchangeable}.
 
\begin{rem}\label{inv-phi}
	The linear map $\phi:UT_n(K)\to UT_n(K)$ given for all $1\le i<j\le n$ by 
	\begin{align}\label{vf(e_kl)=e_(n-l+1_n-k+1)}
		\phi(e_{ij})=e_{n-j+1,n-i+1}
	\end{align}
	is an involution of $UT_n(K)$.
\end{rem}

\begin{cor}\label{from-*-to-star}
	Let $\phi$ be the involution \cref{vf(e_kl)=e_(n-l+1_n-k+1)} of $UT_n(K)$. Given a product $*$ on $UT_n(K)$, let $\star$ be the new product on $A$ given by \cref{a-star-b=phi(phi-inv(b)*phi-inv(a))}.
	\begin{enumerate}
		\item If $*$ satisfies \cref{matching-ass-identity} with some $\sg\in S_2$, then so does $\star$.
		\item If $*$ is interchangeable with $\cdot$, then so is $\star$.
		\item If $*$ is associative, then so is $\star$.
	\end{enumerate}
\end{cor}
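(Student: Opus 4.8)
The plan is to observe that this corollary is simply the specialization of \cref{antiauto=>structure-of-the-same-type} to the concrete map $\phi$ introduced in \cref{inv-phi}. An involution of an associative algebra is by definition an antiautomorphism, so as soon as we know that $\phi$ is an involution of $(UT_n(K),\cdot)$ — which is exactly the content of \cref{inv-phi} — the hypotheses of \cref{antiauto=>structure-of-the-same-type} are satisfied by $\phi$.

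First I would record, as already asserted in \cref{inv-phi}, that $\phi$ is an antiautomorphism of $(UT_n(K),\cdot)$. On the canonical basis this amounts to the identity $\phi(e_{ij}\cdot e_{kl})=\phi(e_{kl})\cdot\phi(e_{ij})$: both sides equal $\dl_{jk}e_{n-l+1,n-i+1}$, once one notes that $\dl_{jk}=\dl_{n-k+1,n-j+1}$, and the general case follows by bilinearity. Since $\phi$ is an involution, we have $\phi\m=\phi$, and hence the product $\star$ defined in the statement through \cref{a-star-b=phi(phi-inv(b)*phi-inv(a))} coincides verbatim with the twisted product appearing in \cref{antiauto=>structure-of-the-same-type}.

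With this identification, the three assertions of the corollary are precisely parts \cref{star-sg-matching}, \cref{star-interchangeable} and \cref{star-associative} of \cref{antiauto=>structure-of-the-same-type} applied to the antiautomorphism $\phi$. I expect no genuine obstacle here: the computational content has already been dispatched in the proof of the lemma, and the only additional point is the verification that $\phi$ is an antiautomorphism, which is immediate from the defining formula \cref{vf(e_kl)=e_(n-l+1_n-k+1)}.
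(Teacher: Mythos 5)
Your proposal is correct and takes essentially the same route as the paper: the corollary is stated there without proof precisely because it is the specialization of \cref{antiauto=>structure-of-the-same-type} to the involution $\phi$ of \cref{inv-phi}, whose inverse is $\phi$ itself, so \cref{a-star-b=phi(phi-inv(b)*phi-inv(a))} applies verbatim. Your explicit verification that $\phi$ is an antiautomorphism — both $\phi(e_{ij}\cdot e_{kl})$ and $\phi(e_{kl})\cdot\phi(e_{ij})$ equal $\dl_{jk}e_{n-l+1,n-i+1}$ since $\dl_{jk}=\dl_{n-k+1,n-j+1}$ — is exactly the routine check the paper leaves implicit in calling the involution ``obvious''.
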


The case $n=2$ is trivial, since $UT_2(K)$ is a one-dimensional algebra with zero multiplication, so any bilinear product on $UT_2(K)$ is totally compatible with $\cdot$. Thus, we assume $n\ge 3$ below.

The description of automorphisms of $UT_3(K)$ will be used several times in the applications, so let us give it here.

\begin{lem}
	The automorphisms of $UT_3(K)$ are the linear maps $\phi:UT_3(K)\to UT_3(K)$ of the form
	\begin{align}\label{aut-UT_3-on-e_12_e_23_e_13}
		\phi(e_{12})=a_{11}e_{12}+a_{31}e_{13},\ \phi(e_{23})=a_{22}e_{23}+a_{32}e_{13},\ \phi(e_{13})=a_{11}a_{22}e_{13},
	\end{align}
	where $a_{11},a_{22}\in K^*$ and $a_{31},a_{32}\in K$.
\end{lem}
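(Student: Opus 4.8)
The plan is to determine $\Aut(UT_3(K))$ by exploiting the fact that $UT_3(K)$ is generated as an algebra by $e_{12}$ and $e_{23}$, so any automorphism $\phi$ is completely determined by its values on these two generators. First I would record the multiplication table: $e_{12}\cdot e_{23}=e_{13}$, while $e_{12}\cdot e_{12}=e_{12}\cdot e_{13}=e_{23}\cdot e_{23}=e_{23}\cdot e_{12}=e_{13}\cdot(\text{anything})=(\text{anything})\cdot e_{13}=0$; in particular $e_{13}$ spans $\Ann(UT_3(K))=UT_3(K)^2$. Since $\phi$ preserves both $UT_3(K)^2$ and $\Ann(UT_3(K))$, it must send $e_{13}$ to a scalar multiple of itself, $\phi(e_{13})=c\,e_{13}$ for some $c\in K$; and $\phi$ being invertible forces $c\in K^*$.

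Next I would write the most general linear images of the generators compatible with the filtration. Because $e_{12}$ and $e_{23}$ lie outside $UT_3(K)^2$ but their images may pick up an $e_{13}$-component, I would set $\phi(e_{12})=a_{11}e_{12}+b_{11}e_{23}+a_{31}e_{13}$ and $\phi(e_{23})=a_{12}e_{12}+a_{22}e_{23}+a_{32}e_{13}$. The key structural constraint is that $\phi$ must respect the product, in particular the vanishing products. From $e_{12}\cdot e_{12}=0$ I would compute $\phi(e_{12})\cdot\phi(e_{12})$ and read off $a_{11}b_{11}=0$; similarly $e_{23}\cdot e_{23}=0$ gives $a_{12}a_{22}=0$, and $e_{23}\cdot e_{12}=0$ gives $a_{12}b_{11}=0$. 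Combined with the requirement that $\{\phi(e_{12}),\phi(e_{23})\}$ be linearly independent modulo $\Ann(UT_3(K))$ (so that $\phi$ is surjective), these relations force the cross terms $b_{11}$ and $a_{12}$ to vanish, leaving $\phi(e_{12})=a_{11}e_{12}+a_{31}e_{13}$ and $\phi(e_{23})=a_{22}e_{23}+a_{32}e_{13}$.

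Then I would pin down $c$ by using the one nontrivial product: applying $\phi$ to $e_{13}=e_{12}\cdot e_{23}$ gives $c\,e_{13}=\phi(e_{12})\cdot\phi(e_{23})=a_{11}a_{22}\,e_{13}$, so $c=a_{11}a_{22}$, which matches the claimed formula \cref{aut-UT_3-on-e_12_e_23_e_13}. The invertibility $c\in K^*$ then yields $a_{11},a_{22}\in K^*$. Finally, for the converse I would check that every linear map of the stated form is multiplicative (the only product to verify is $e_{12}\cdot e_{23}\mapsto e_{13}$, which works by construction) and bijective (its matrix in the basis $e_{12},e_{23},e_{13}$ is upper triangular with nonzero diagonal entries $a_{11},a_{22},a_{11}a_{22}$). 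I expect the main—though still routine—obstacle to be the bookkeeping in the linear-algebra step that eliminates the cross terms $a_{12}$ and $b_{11}$: one must argue carefully that surjectivity, not just the vanishing relations, is what rules out the degenerate solutions where $a_{11}=0$ or $a_{22}=0$.
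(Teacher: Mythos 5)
Your proposal is correct and takes essentially the same route as the paper: write $\phi(e_{12})$ and $\phi(e_{23})$ as general linear combinations, observe that $\phi(e_{13})$ must be a nonzero scalar multiple of $e_{13}$ since $\spn_K\{e_{13}\}=\Ann(UT_3(K))$, and use multiplicativity to kill the cross terms and identify the scalar as $a_{11}a_{22}$. The only cosmetic difference is the order of deductions—the paper first gets $a_{11},a_{22}\in K^*$ from $\phi(e_{12})\phi(e_{23})=\phi(e_{13})$ and then eliminates the cross terms via $\phi(e_{12})^2=\phi(e_{23})^2=0$, whereas you eliminate them using the vanishing relations together with linear independence of the images modulo the annihilator—but both amount to the same computation.
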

\begin{proof}
	It is easy to check that any linear map of the form \cref{aut-UT_3-on-e_12_e_23_e_13} is an automorphism of $UT_3(K)$. Conversely, let $\phi\in\Aut(UT_3(K))$. Clearly, $\phi(e_{13})=a_{33}e_{13}$ for some $a_{33}\in K^*$, because $\spn_K\{e_{13}\}=\Ann(UT_3(K))$. Write $\phi(e_{12})=a_{11}e_{12}+a_{21}e_{23}+a_{31}e_{13}$ and $\phi(e_{23})=a_{12}e_{12}+a_{22}e_{23}+a_{32}e_{13}$. Then $\phi(e_{12})\phi(e_{23})=\phi(e_{13})$ gives $a_{11}a_{22}=a_{33}$. In particular, $a_{11},a_{22}\in K^*$. Moreover, $\phi(e_{12})^2=0\iff a_{11}a_{21}=0\iff a_{21}=0$ and similarly $\phi(e_{23})^2=0\iff a_{12}a_{22}=0\iff a_{12}=0$.
\end{proof}
	\section{$\id$-matching structures on $UT_n(K)$}\label{sec-id-match}

We begin this section with the following general fact.
\begin{lem}\label{*_1=*_2-on-generators}
	Let $(A,\cdot)$ be an associative algebra and $X$ a set of generators of $A$. Any bilinear product on $A$ satisfying \cref{matching-ass-identity} with $\sg=\id$ is uniquely determined by its values on $X\times X$.
\end{lem}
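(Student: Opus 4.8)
The plan is to read the two $\id$-matching identities as rewriting rules. Writing the fixed associative product as $\cdot_1=\cdot$ and the unknown structure as $\cdot_2=*$, \cref{matching-ass-identity} with $\sg=\id$ becomes $(a\cdot b)*c=a\cdot(b*c)$ and $(a*b)\cdot c=a*(b\cdot c)$. Since $X$ generates $(A,\cdot)$ as a $K$-algebra, every element of $A$ is a $K$-linear combination of $\cdot$-monomials $x_1\cdot x_2\cdots x_k$ with $x_i\in X$ and $k\ge 1$ (single generators being the length-one monomials; no empty product occurs, so the possible non-unitality of $A$ is harmless). As $*$ is bilinear, it suffices to show that $u*v$ is determined by the values of $*$ on $X\times X$ for all such monomials $u=x_1\cdots x_m$ and $v=y_1\cdots y_n$.

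First I would strip the left argument down to a single generator using the first identity. Writing $u=x_1\cdot(x_2\cdots x_m)$ and applying $(a\cdot b)*c=a\cdot(b*c)$ with $a=x_1$, $b=x_2\cdots x_m$, $c=v$ gives $u*v=x_1\cdot\bigl((x_2\cdots x_m)*v\bigr)$. An induction on $m$ peels the leftmost factors off one at a time and yields
\[
u*v=x_1\cdot\bigl(x_2\cdot(\cdots(x_m*v)\cdots)\bigr),
\]
so that the innermost $*$-product has the single generator $x_m$ as its left argument.

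Next I would strip the right argument using the second identity in the form $a*(b\cdot c)=(a*b)\cdot c$. For $x_m\in X$ and $v=(y_1\cdots y_{n-1})\cdot y_n$ this gives $x_m*v=\bigl(x_m*(y_1\cdots y_{n-1})\bigr)\cdot y_n$, and an induction on $n$ produces
\[
x_m*v=\bigl(\cdots(x_m*y_1)\cdot y_2\cdots\bigr)\cdot y_n.
\]
Combining the two reductions expresses $u*v$ through the single $*$-evaluation $x_m*y_1$ on $X\times X$ together with left and right $\cdot$-multiplications, all of which are fixed by $(A,\cdot)$ and independent of $*$. Hence any two $\id$-matching products that agree on $X\times X$ agree on every pair of monomials, and therefore, by bilinearity, everywhere; this is the claimed uniqueness.

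I expect no essential obstacle: the argument is a clean double induction. The only points demanding care are to invoke the correct identity in each step — the first pulls factors out on the left, the second pushes them out on the right — and to note that single generators are legitimate length-one monomials, so the inductions bottom out at a genuine $X\times X$ value. It is exactly the $\id$-matching case that makes both reduction directions simultaneously available; for $\sg=(12)$ this mechanism shifts the left argument onto the right one rather than down onto $X\times X$, which is consistent with the separate analysis of that case in \cref{sec-(12)-match}.
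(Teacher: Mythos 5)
Your proposal is correct and takes essentially the same route as the paper: both exploit the two $\id$-matching identities $(a\cdot b)*c=a\cdot(b*c)$ and $(a*b)\cdot c=a*(b\cdot c)$ to slide the $\cdot$-factors of each argument out of the $*$-product until only a value on $X\times X$ remains. The paper compresses this into the single computation $(x_1\cdot x_2)*(y_1\cdot y_2)=x_1\cdot(x_2*y_1)\cdot y_2$ together with an appeal to bilinearity, while your double induction on monomial lengths writes out the general (and mixed-length) case explicitly -- a more detailed rendering of the same argument.
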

\begin{proof}
	Let $*_1$ and $*_2$ be two bilinear products on $A$ satisfying \cref{matching-ass-identity} with $\sg=\id$ and such that $*_1|_{X\times X}=*_2|_{X\times X}$. Then for all $x_1,x_2,y_1,y_2\in X$ we have 
	\begin{align*}
		(x_1\cdot x_2)*_1(y_1\cdot y_2)=x_1\cdot (x_2*_1y_1)\cdot y_2=x_1\cdot (x_2*_2y_1)\cdot y_2=(x_1\cdot x_2)*_2(y_1\cdot y_2).
	\end{align*}
	By bilinearity, $*_1=*_2$ on $A^2$.
\end{proof}

In view of \cref{*_1=*_2-on-generators}, it suffices to know how an $\id$-matching structure on $UT_n(K)$ acts on the generators $e_{i,i+1}$, $1\le i<n$.

	\begin{lem}\label{e_ii+1*e_jj+1-possible-cases}
		Let $*$ be a bilinear product on $UT_n(K)$ satisfying \cref{matching-ass-identity} with $\sg=\id$. Given arbitrary $1\le i<n$ and $1\le j<n$, we have
		\begin{enumerate}
			\item\label{e_i_i+1*e_j_j+1-1<i<j+1<n} if $1<i<j+1<n$, then $e_{i,i+1}*e_{j,j+1}\in\spn_K\{e_{1,j+1},e_{1n},e_{in},e_{i,j+1}\}$;
			\item\label{e_i_i+1*e_j_j+1-1=i<j+1<n} if $1=i<j+1<n$, then $e_{i,i+1}*e_{j,j+1}\in\spn_K\{e_{1,j+1},e_{1n}\}$;
			\item\label{e_i_i+1*e_j_j+1-1<i<j+1=n} if $1<i<j+1=n$, then $e_{i,i+1}*e_{j,j+1}\in\spn_K\{e_{1n},e_{in}\}$;
			\item\label{e_i_i+1*e_j_j+1-1=i<j+1=n} if $1=i<j+1=n$, then $e_{i,i+1}*e_{j,j+1}\in\spn_K\{e_{1n}\}$;
			\item if $i\ge j+1$, then $e_{i,i+1}*e_{j,j+1}\in\spn_K\{e_{1,j+1},e_{1n},e_{in}\}$.
		\end{enumerate}
	\end{lem}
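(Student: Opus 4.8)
Throughout write $\cdot_1=\cdot$ and $\cdot_2=*$, so that the defining identities \cref{matching-ass-identity} with $\sg=\id$ read
\[
(a\cdot b)*c=a\cdot(b*c)\quad\text{and}\quad(a*b)\cdot c=a*(b\cdot c)
\]
for all $a,b,c\in UT_n(K)$. Fix the generators $p=e_{i,i+1}$ and $q=e_{j,j+1}$ and write $p*q=\sum_{k<l}(p*q)(k,l)\,e_{kl}$. The plan is to pin down which values of $p*q$ are forced to vanish by multiplying $p*q$ by matrix units on either side and invoking the two identities.

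First I would multiply on the right. Using the second identity with $a=p$, $b=q$, $c=e_{kl}$ together with $e_{j,j+1}\cdot e_{kl}=\dl_{j+1,k}e_{jl}$ gives $(p*q)\cdot e_{kl}=p*(q\cdot e_{kl})=\dl_{j+1,k}(p*e_{jl})$, so $(p*q)\cdot e_{kl}=0$ whenever $k\ne j+1$. Since $(p*q)\cdot e_{kl}=\sum_{i'=1}^{k}(p*q)(i',k)e_{i',l}$ only sees the $k$-th column of $p*q$, taking any admissible $l>k$ and using linear independence of the $e_{i',l}$ forces the whole $k$-th column to vanish for every $k\ne j+1$ with $k<n$; the column $k=n$ carries no constraint, since there is no $l>n$. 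Thus only columns $j+1$ and $n$ of $p*q$ can be nonzero.

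Symmetrically I would multiply on the left, now using the first identity with $a=e_{kl}$, $b=p$, $c=q$ together with $e_{kl}\cdot e_{i,i+1}=\dl_{l,i}e_{k,i+1}$ to obtain $e_{kl}\cdot(p*q)=(e_{kl}\cdot p)*q=\dl_{l,i}(e_{k,i+1}*q)$, so $e_{kl}\cdot(p*q)=0$ whenever $l\ne i$. As $e_{kl}\cdot(p*q)=\sum_{m=l}^{n}(p*q)(l,m)e_{km}$ only sees the $l$-th row of $p*q$, taking any admissible $k<l$ forces the $l$-th row to vanish for every $l\ne i$ with $l>1$; the row $l=1$ carries no constraint, since there is no $k<1$. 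Hence only rows $1$ and $i$ of $p*q$ can be nonzero.

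Combining the two reductions, $p*q$ is supported on the four positions $(1,j+1)$, $(1,n)$, $(i,j+1)$, $(i,n)$, so $p*q\in\spn_K\{e_{1,j+1},e_{1n},e_{i,j+1},e_{in}\}$ whenever all four are strictly upper triangular. The statement then follows by discarding, in each case, the $e_{kl}$ that are not strictly upper triangular or that coincide: case (i) keeps all four; when $i=1$ the rows $1$ and $i$ collapse, killing the two entries in row $i$ (case (ii)); when $j+1=n$ the columns $j+1$ and $n$ collapse, killing the two entries in column $j+1$ (case (iii)); case (iv) combines both collapses; and when $i\ge j+1$ the position $(i,j+1)$ fails $i<j+1$, so $e_{i,j+1}$ is dropped (note here $i\ge j+1\ge 2$, so rows $1$ and $i$ stay distinct), yielding case (v). I expect the only delicate point to be this bookkeeping of the boundary cases — in particular remembering that the extreme column $n$ and the extreme row $1$ escape the vanishing argument precisely because they admit no further multiplication, which is exactly what produces the terms $e_{1n}$, $e_{in}$ and $e_{1,j+1}$.
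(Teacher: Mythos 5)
Your proposal is correct and takes essentially the same approach as the paper: both proofs use the two $\id$-matching identities to show that left multiplication by matrix units kills every row of $e_{i,i+1}*e_{j,j+1}$ except rows $1$ and $i$, that right multiplication kills every column except columns $j+1$ and $n$, and then sort the resulting four positions $(1,j+1)$, $(1,n)$, $(i,j+1)$, $(i,n)$ into the five cases according to coincidences and strict upper-triangularity. The only cosmetic difference is that the paper multiplies by the specific units $e_{1k}$ and $e_{ln}$ to read off individual entries $(k,l)$, while you multiply by general $e_{kl}$ and argue that whole rows and columns vanish, including the correct observation that row $1$ and column $n$ escape the argument.
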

\begin{proof}
	Let $1<k<l$, $k\ne i$. Then
	\begin{align*}
		(e_{i,i+1}*e_{j,j+1})(k,l)=(e_{1k}\cdot(e_{i,i+1}*e_{j,j+1}))(1,l)=((e_{1k}\cdot e_{i,i+1})*e_{j,j+1})(1,l)=0.
	\end{align*}
	Similarly, for $k<l<n$, $l\ne j+1$, one has
	\begin{align*}
		(e_{i,i+1}*e_{j,j+1})(k,l)=((e_{i,i+1}*e_{j,j+1})\cdot e_{ln})(k,n)=(e_{i,i+1}*(e_{j,j+1}\cdot e_{ln}))(k,n)=0.
	\end{align*}
	Thus,
	\begin{align*}
		e_{i,i+1}*e_{j,j+1}\in
		\begin{cases}
			\spn_K\{e_{1,j+1},e_{1n},e_{in},e_{i,j+1}\}, & i<j+1,\\
			\spn_K\{e_{1,j+1},e_{1n},e_{in}\}, & i\ge j+1.
		\end{cases}
	\end{align*} 
Observe that in the case $i<j+1$ some of the elements $e_{1,j+1}$, $e_{1n}$, $e_{in}$, $e_{i,j+1}$ may coincide. More precisely, we have $4$ different subcases \cref{e_i_i+1*e_j_j+1-1<i<j+1<n,e_i_i+1*e_j_j+1-1=i<j+1<n,e_i_i+1*e_j_j+1-1<i<j+1=n,e_i_i+1*e_j_j+1-1=i<j+1=n} listed above. 
\end{proof}

We are going to show that the product $e_{i,i+1}*e_{j,j+1}$ can indeed be an arbitrary element of the corresponding span from \cref{e_ii+1*e_jj+1-possible-cases}. To this end, we need one more general lemma. For a non-unital associative algebra $A$ denote by $A^1$ its unitization. As a $K$-space, $A^1=A\oplus K$ and $(a,k)(b,l)=(ab+kb+la,kl)$ for all $(a,k),(b,l)\in A^1$. Then $A$ is an ideal in $A^1$. For any $a\in A$, the $K$-space $A^1a$ (resp. $aA^1$) is the left (resp. right) ideal of $A$ generated by $a$.

\begin{lem}\label{id-matching-structure-determined-by-a*b=c}
	Let $A$ be an associative algebra and $a,b,c\in A$ satisfying the following properties:
	\begin{enumerate}
		\item\label{xa=ya=>xc=yc-and-bx=by=>cx=cy} for all $x,y\in A^1$: $xa=ya\impl xc=yc$ and $bx=by\impl cx=cy$;
		\item\label{A=A^1a-oplus-I=bA^1-oplus-J} there is a left ideal $I$ in $A$ such that $A=A^1a\oplus I$ (as left $A$-modules) and there is a right ideal $J$ in $A$ such that $A=bA^1\oplus J$ (as right $A$-modules). 
	\end{enumerate}
Then, given $x,y\in A$, write 
\begin{align}\label{x=pa+u-and-y=bq+v}
	x=pa+u\text{ and }y=bq+v
\end{align}
with $p,q\in A^1$ and $u\in I$, $v\in J$. The formula
\begin{align}\label{(xa+u)*(by+v)-is-xcy}
	x*y=pcq
\end{align}
defines a bilinear product $*$ on $A$. Moreover it satisfies \cref{matching-ass-identity} with $\sg=\id$.
\end{lem}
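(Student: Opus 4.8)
The plan is to verify three things about the formula \cref{(xa+u)*(by+v)-is-xcy}: that it is \emph{well-defined}, that it is \emph{bilinear}, and that it satisfies \cref{matching-ass-identity} with $\sg=\id$. Well-definedness is the first concern because the decompositions in \cref{x=pa+u-and-y=bq+v} are generally not unique: the element $p\in A^1$ with $x=pa+u$ is determined only modulo the left annihilator of $a$, and similarly for $q$. So I would first argue that although $p$ and $q$ are not unique, the product $pcq$ is. Here is where hypothesis \cref{xa=ya=>xc=yc-and-bx=by=>cx=cy} does the work: if $pa+u=p'a+u'$ with $u,u'\in I$, then since $A=A^1a\oplus I$ the summands must agree, giving $pa=p'a$; by the first implication in \cref{xa=ya=>xc=yc-and-bx=by=>cx=cy} this yields $pc=p'c$, hence $pcq=p'cq$. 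Symmetrically, if $bq+v=bq'+v'$ with $v,v'\in J$, the direct sum $A=bA^1\oplus J$ forces $bq=bq'$, and the second implication gives $cq=cq'$, so $pcq=pcq'$. Thus $x*y=pcq$ depends only on $x$ and $y$.

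Bilinearity should then be essentially formal. Given the direct-sum decompositions, the map $x\mapsto pa$ (the projection of $x$ onto $A^1a$ along $I$) is $K$-linear, and I would use well-definedness to promote this to a linear assignment $x\mapsto pc$; concretely, if $x=pa+u$ and $x'=p'a+u'$ then $x+\lb x'=(p+\lb p')a+(u+\lb u')$ is a valid decomposition, so $(x+\lb x')*y$ is computed from $p+\lb p'$, and well-definedness lets me conclude $(x+\lb x')*y=(p+\lb p')cq=x*y+\lb(x'*y)$. The argument in the second variable is identical using the right-module decomposition.

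The main step, and the one I expect to require the most care, is checking the $\id$-matching identities \cref{matching-ass-identity}, namely $(x\cdot y)*z=x\cdot(y*z)$ and $(x*y)\cdot z=x*(y\cdot z)$ for all $x,y,z\in A$. Take decompositions $x=pa+u$, $y=bq+v=p'a+u'$, $z=bq'+v'$ as needed. For $(x*y)\cdot z=x*(y\cdot z)$: the left side is $(pcq)\cdot z$, and I would compute a decomposition of $y\cdot z$ to evaluate the right side, using that $y\cdot z=bq\cdot z+v\cdot z$ so that $b(q\cdot z)$ is the relevant $bA^1$-part modulo $J$ (since $J$ is a right ideal, $v\cdot z\in J$), giving $x*(y\cdot z)=pc(q\cdot z)=(pcq)\cdot z$ by associativity of $\cdot$. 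The symmetric identity $(x\cdot y)*z=x\cdot(y*z)$ is handled dually: $x\cdot y=x\cdot pa+x\cdot u$ with $x\cdot u\in I$ (as $I$ is a left ideal), so the $A^1a$-part is $(x\cdot p)a$, whence $(x\cdot y)*z=(x\cdot p)cq'=x\cdot(pcq')=x\cdot(y*z)$, again invoking only associativity of $\cdot$. The delicate point throughout is keeping track of which summands land in $I$ or $J$ so that the projections are read off correctly; once the direct-sum structure is used to isolate the correct $p$, $q$ at each stage, the two identities reduce to associativity of the ambient product $\cdot$ together with the already-established well-definedness.
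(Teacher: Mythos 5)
Your proof is correct and takes essentially the same route as the paper: well-definedness via hypothesis \cref{xa=ya=>xc=yc-and-bx=by=>cx=cy} together with the direct sums in \cref{A=A^1a-oplus-I=bA^1-oplus-J}, and the two $\id$-matching identities obtained by observing that $I$ absorbs left multiplication and $J$ absorbs right multiplication, so the $A^1a$- and $bA^1$-components of $x\cdot y$ and $y\cdot z$ can be read off and everything reduces to associativity of $\cdot$, exactly as in the paper's computation $zx=(zp)a+zu$ and $yz=b(qz)+vz$. The only blemish is a notational slip in your second identity, where you reuse $p,u$ (initially assigned to the decomposition of $x$) for the left-module decomposition of $y$; with the primes straightened out, your argument matches the paper's.
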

\begin{proof}
	First of all, $*$ is well-defined. Indeed, by \cref{A=A^1a-oplus-I=bA^1-oplus-J} any element of $A$ can be written as $pa+u$ for some $p\in A^1$, $u\in I$ and as $bq+v$ for some $q\in A^1$, $v\in J$. Moreover, if $pa+u=p'a+u'$ with $p,p'\in A^1$ and $u,u'\in I$, then $pa=p'a$, whence $pc=p'c$ by \cref{xa=ya=>xc=yc-and-bx=by=>cx=cy}. Similarly, if $bq+v=bq'+v'$ with $q,q'\in A^1$ and $v,v'\in J$, then $bq=bq'$, whence $cq=cq'$ by \cref{xa=ya=>xc=yc-and-bx=by=>cx=cy}. Hence, $pcq=p'cq=p'cq'$, so the right-hand side of \cref{(xa+u)*(by+v)-is-xcy} depends only on $x$ and $y$, but not on the choice of $p$ and $q$ in \cref{x=pa+u-and-y=bq+v}. 
	
	It is clear that $*$ is bilinear. Furthermore, for any $x,y\in A$ as in \cref{x=pa+u-and-y=bq+v} and $z\in A$ we have $zx=(zp)a+zu$, where $zu\in I$, so $(zx)*y=(zp)cq=z(pcq)=z(x*y)$. Similarly, $yz=b(qz)+vz$, where $vz\in J$, so $x*(yz)=pc(qz)=(pcq)z=(x*y)z$.
\end{proof}

Observe that $UT_n(K)^1$ is isomorphic to the subalgebra $UT_n(K)+K\dl$ of $T_n(K)$, where $\dl$ denotes the identity element of $T_n(K)$. 
\begin{lem}\label{UT_n(K)^1e_ii+1-and-e_ii+1UT_n(K)^1}
	Let $1\le i<n$. Then
	\begin{enumerate}
		\item\label{UT_n(K)=UT_n(K)^1e_ii+1-oplus-UT_n(K)(dl-e_i+1)} $UT_n(K)=UT_n(K)^1\cdot e_{i,i+1}\oplus UT_n(K)\cdot (\dl-e_{i+1})$;
		\item\label{UT_n(K)=e_ii+1UT_n(K)^1-oplus-(dl-e_i)UT_n(K)} $UT_n(K)=e_{i,i+1}\cdot UT_n(K)^1\oplus (\dl-e_i)\cdot UT_n(K)$.
	\end{enumerate}
\end{lem}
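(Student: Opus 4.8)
The plan is to identify each of the two summands in \cref{UT_n(K)=UT_n(K)^1e_ii+1-oplus-UT_n(K)(dl-e_i+1)} explicitly as the span of a subset of the canonical basis $\{e_{pq}:1\le p<q\le n\}$, and then to observe that these two subsets partition that basis, which is exactly what a direct sum decomposition into two subspaces spanned by complementary sets of basis vectors requires. Part \cref{UT_n(K)=e_ii+1UT_n(K)^1-oplus-(dl-e_i)UT_n(K)} I would then deduce from part \cref{UT_n(K)=UT_n(K)^1e_ii+1-oplus-UT_n(K)(dl-e_i+1)} by applying the involution $\phi$ of \cref{inv-phi}, so as to avoid repeating the computation.

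For the first summand in part \cref{UT_n(K)=UT_n(K)^1e_ii+1-oplus-UT_n(K)(dl-e_i+1)} I would use the formula $a\cdot e_{kl}=\sum_{p=1}^{k}a(p,k)e_{pl}$: for $a\in UT_n(K)$ and $t\in K$ it gives $(a+t\dl)\cdot e_{i,i+1}=\sum_{p=1}^{i-1}a(p,i)e_{p,i+1}+t\,e_{i,i+1}$, and letting $a$ and $t$ vary freely shows $UT_n(K)^1\cdot e_{i,i+1}=\spn_K\{e_{p,i+1}:1\le p\le i\}$, i.e.\ precisely column $i+1$ of the strictly upper triangular part. For the second summand the same formula gives $a\cdot(\dl-e_{i+1})=a-\sum_{p=1}^{i}a(p,i+1)e_{p,i+1}$, which agrees with $a$ off column $i+1$ and vanishes on column $i+1$; hence $UT_n(K)\cdot(\dl-e_{i+1})=\spn_K\{e_{pq}:1\le p<q\le n,\ q\ne i+1\}$. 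Since every matrix unit $e_{pq}$ lies in the first span exactly when $q=i+1$ (and then automatically $p\le i$) and in the second span exactly when $q\ne i+1$, the two spans are complementary, giving the asserted direct sum.

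For part \cref{UT_n(K)=e_ii+1UT_n(K)^1-oplus-(dl-e_i)UT_n(K)} I would apply the linear bijection $\phi$, extended to $UT_n(K)^1$ by $\phi(\dl)=\dl$, to the decomposition just obtained. As $\phi$ is an antiautomorphism it reverses products, so $\phi(UT_n(K)^1\cdot e_{i,i+1})=\phi(e_{i,i+1})\cdot UT_n(K)^1=e_{n-i,n-i+1}\cdot UT_n(K)^1$ and $\phi(UT_n(K)\cdot(\dl-e_{i+1}))=(\dl-e_{n-i})\cdot UT_n(K)$, where $\phi(e_{i,i+1})=e_{n-i,n-i+1}$ and $\phi(e_{i+1})=e_{n-i}$ follow from \cref{vf(e_kl)=e_(n-l+1_n-k+1)}. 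Since $\phi$ carries a direct sum onto a direct sum, and since $n-i$ ranges over $1\le i<n$ as $i$ does, this is exactly part \cref{UT_n(K)=e_ii+1UT_n(K)^1-oplus-(dl-e_i)UT_n(K)} after renaming the index. The computations are entirely routine; the only point requiring genuine care is the bookkeeping of which matrix units survive in each summand, so that the two index sets are correctly seen to be complementary.
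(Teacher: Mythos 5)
Your proof is correct, and for part (i) it is in substance the paper's own argument: both of you compute $UT_n(K)^1\cdot e_{i,i+1}=\spn_K\{e_{p,i+1}:1\le p\le i\}$ from the multiplication rule, i.e.\ the full column $i+1$ (the paper records this as $UT_n(K)\cdot e_{i+1}$). You differ in two local choices. For directness the paper avoids basis bookkeeping: having obtained spanning from $UT_n(K)=UT_n(K)\cdot\dl$, it observes that $e_{i+1}$ and $\dl-e_{i+1}$ are orthogonal idempotents of $T_n(K)$, which kills the intersection at once; your identification of the second summand as $\spn_K\{e_{pq}:q\ne i+1\}$ and the partition of the canonical basis is equally valid and somewhat more explicit, yielding spanning and directness simultaneously. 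For part (ii) the paper simply reruns the symmetric computation (``similar''), whereas you transfer part (i) through the involution $\phi$ of \cref{inv-phi} --- a move entirely in the paper's spirit (cf.\ \cref{from-*-to-star}). One small wrinkle there should be patched: $e_{i+1}$ does not lie in $UT_n(K)^1=UT_n(K)\oplus K\dl$, so extending $\phi$ by $\phi(\dl)=\dl$ does not by itself give meaning to $\phi(\dl-e_{i+1})$, nor to the antimultiplicativity step $\phi(a\cdot(\dl-e_{i+1}))=(\dl-e_{n-i})\cdot\phi(a)$. Either extend $\phi$ to all of $T_n(K)$ by the same recipe $e_{kl}\mapsto e_{n-l+1,n-k+1}$ for $1\le k\le l\le n$ (this is an involution of $T_n(K)$ restricting to \cref{vf(e_kl)=e_(n-l+1_n-k+1)} on $UT_n(K)$), or bypass the issue entirely by applying $\phi$ to your explicit spans: $\phi(\spn_K\{e_{p,i+1}:p\le i\})=\spn_K\{e_{n-i,t}:t>n-i\}=e_{n-i,n-i+1}\cdot UT_n(K)^1$ and $\phi(\spn_K\{e_{pq}:q\ne i+1\})=\spn_K\{e_{st}:s\ne n-i\}=(\dl-e_{n-i})\cdot UT_n(K)$, after which renaming $j=n-i$ gives exactly part (ii). With that minor repair your argument is complete.
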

\begin{proof}
It is clear that 
\begin{align*}
	UT_n(K)^1\cdot e_{i,i+1}&=\spn_K\{e_{i,i+1}\}+\spn_K\{e_{k,i+1}\mid k<i\}
	=\spn_K\{e_{k,i+1}\mid k<i+1\}=UT_n(K)\cdot e_{i+1}.
\end{align*}
Hence, 
\begin{align*}
	UT_n(K)=UT_n(K)\cdot\dl=UT_n(K)^1\cdot e_{i,i+1}+UT_n(K)\cdot(\dl-e_{i+1}).
\end{align*}
Since $e_{i+1}$ and $\dl-e_{i+1}$ are orthogonal idempotents of $T_n(K)$, the sum is direct, so we have \cref{UT_n(K)=UT_n(K)^1e_ii+1-oplus-UT_n(K)(dl-e_i+1)}. The proof of \cref{UT_n(K)=e_ii+1UT_n(K)^1-oplus-(dl-e_i)UT_n(K)} is similar.
\end{proof}

\begin{lem}\label{fe_ii+1=ge_ii+1-and-e_jj+1f=e_jj+1g}
	Let $1\le i<n$ and $1<j\le n$. Then
	\begin{enumerate}
		\item\label{e_inUT_n(K)=UT_n(K)e_1j=0} $e_{in}\cdot UT_n(K)=UT_n(K)\cdot e_{1j}=\{0\}$;
		\item\label{fe_ii+1=ge_ii+1=>fe_ik=ge_ik-and-e_jj+1f=e_jj+1g=>e_lj+1f=e_lj+1g} given $f,g\in UT_n(K)$, we have 
		\begin{align*}
			f\cdot e_{i,i+1}=g\cdot e_{i,i+1}&\impl f\cdot e_{ik}=g\cdot e_{ik}\text{ for all }k>i,\\
			e_{j,j+1}\cdot f=e_{j,j+1}\cdot g&\impl e_{l,j+1}\cdot f=e_{l,j+1}\cdot g\text{ for all }l<j+1.
		\end{align*}
	\end{enumerate}
\end{lem}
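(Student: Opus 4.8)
The plan is to reduce everything to the two explicit multiplication formulas recorded above, namely $e_{kl}\cdot a=\sum_{s=l}^{n}a(l,s)e_{ks}$ and $a\cdot e_{kl}=\sum_{s=1}^{k}a(s,k)e_{sl}$, together with the fact that every $a\in UT_n(K)$ is strictly upper triangular, so that $a(1,1)=a(n,n)=0$. For \cref{e_inUT_n(K)=UT_n(K)e_1j=0} I would just specialize these formulas: taking $k=i$, $l=n$ gives $e_{in}\cdot a=a(n,n)e_{in}=0$ for every $a\in UT_n(K)$, whence $e_{in}\cdot UT_n(K)=\{0\}$, and taking $k=1$, $l=j$ gives $a\cdot e_{1j}=a(1,1)e_{1j}=0$, whence $UT_n(K)\cdot e_{1j}=\{0\}$. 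This part is immediate and carries no real difficulty.

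For \cref{fe_ii+1=ge_ii+1=>fe_ik=ge_ik-and-e_jj+1f=e_jj+1g=>e_lj+1f=e_lj+1g} the key move is to pass to the difference $h=f-g\in UT_n(K)$, so that the two hypotheses become $h\cdot e_{i,i+1}=0$ and $e_{j,j+1}\cdot h=0$, and the two conclusions become $h\cdot e_{ik}=0$ for all $k>i$ and $e_{l,j+1}\cdot h=0$ for all $l<j+1$; by bilinearity these reformulations are equivalent to the stated implications. For the first implication I would expand $h\cdot e_{i,i+1}=\sum_{s=1}^{i}h(s,i)e_{s,i+1}$. Since the matrix units $e_{s,i+1}$ with $1\le s\le i$ are linearly independent, the vanishing of this sum forces $h(s,i)=0$ for all $1\le s\le i$. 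Feeding exactly these coefficients into $h\cdot e_{ik}=\sum_{s=1}^{i}h(s,i)e_{sk}$ shows that it vanishes for every $k>i$, which is what is wanted.

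The second implication is entirely parallel: expanding $e_{j,j+1}\cdot h=\sum_{s=j+1}^{n}h(j+1,s)e_{j,s}$ and using that it is zero yields $h(j+1,s)=0$ for all $j+1\le s\le n$, whence $e_{l,j+1}\cdot h=\sum_{s=j+1}^{n}h(j+1,s)e_{l,s}=0$ for every $l<j+1$. Alternatively, one can deduce it from the first implication via the involution $\phi$ of \cref{inv-phi}: since $\phi$ is an antiautomorphism with $\phi(e_{j,j+1})=e_{n-j,n-j+1}$, the identity $e_{j,j+1}\cdot h=0$ becomes $\phi(h)\cdot e_{n-j,n-j+1}=0$, which is precisely the hypothesis of the first implication for $\phi(h)$ with $i=n-j$; that implication then gives $\phi(h)\cdot e_{n-j,k}=0$ for all $k>n-j$, and applying $\phi$ once more turns this into $e_{l,j+1}\cdot h=0$ for all $l<j+1$. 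I would keep the direct computation, since it is as short as the symmetry argument and avoids the index bookkeeping. There is no genuine obstacle in this lemma; the only points needing attention are the ranges of the summation indices and the fact that $e_{j,j+1}$ exists only for $j<n$, the case $j=n$ being vacuous under the usual convention that out-of-range matrix units vanish.
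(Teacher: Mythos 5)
Your proof is correct, but for part (ii) you take a different (coordinate-based) route than the paper. The paper's proof is a two-line factorization argument: it observes that $e_{ik}=e_{i,i+1}\cdot e_{i+1,k}$ for $k>i+1$ and $e_{l,j+1}=e_{lj}\cdot e_{j,j+1}$ for $l<j$, so one simply multiplies the hypothesis $f\cdot e_{i,i+1}=g\cdot e_{i,i+1}$ on the right by $e_{i+1,k}$ (resp.\ the hypothesis $e_{j,j+1}\cdot f=e_{j,j+1}\cdot g$ on the left by $e_{lj}$) and invokes associativity, with the cases $k=i+1$ and $l=j$ being the hypotheses themselves; no passage to the difference $h=f-g$ and no extraction of matrix entries is needed. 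Your expansion in the canonical basis is equally valid and buys slightly more information: it identifies exactly which entries the hypotheses kill (the $i$-th column $h(s,i)=0$ for $s\le i$, and the $(j+1)$-th row $h(j+1,t)=0$ for $t\ge j+1$), whereas the paper's argument is coordinate-free and matches the multiplicative style used in the surrounding lemmas. Your alternative derivation of the second implication from the first via the involution $\phi$ of \cref{inv-phi} is also sound and mirrors the symmetry device the paper itself uses repeatedly (as in \cref{from-*-to-star}), though, as you say, the direct computation is just as short. Two smaller points in your favor: the paper dismisses part (i) as obvious, and your one-line computations $e_{in}\cdot a=a(n,n)e_{in}=0$ and $a\cdot e_{1j}=a(1,1)e_{1j}=0$ are exactly the intended justification; and your remark that the second implication only makes literal sense for $j<n$ (the stated range $1<j\le n$ makes $e_{j,j+1}$ out of range at $j=n$) is a fair catch --- the paper's own factorization $e_{l,j+1}=e_{lj}\cdot e_{j,j+1}$ tacitly assumes $j+1\le n$ as well, and in the only application (the verification in \cref{structures-M^id1_ij}) the implication is indeed invoked with $j<n$.
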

\begin{proof}
	Item \cref{e_inUT_n(K)=UT_n(K)e_1j=0} is obvious and \cref{fe_ii+1=ge_ii+1=>fe_ik=ge_ik-and-e_jj+1f=e_jj+1g=>e_lj+1f=e_lj+1g} follows from the fact that $e_{ik}=e_{i,i+1}\cdot e_{i+1,k}$ for all $k>i+1$ and $e_{l,j+1}=e_{lj}\cdot e_{j,j+1}$ for all $l<j$.
\end{proof}

\begin{lem}\label{structures-M^id1_ij}
	The following bilinear products $*$ on $UT_n(K)$ satisfy \cref{matching-ass-identity} with $\sg=\id$:\footnote{Here and below we usually omit the zero products in the multiplication table.}
	\begin{enumerate}
		\item\label{e_ii+1*e_jl=e_1l} $\M^{\id,1}_{i,j} (1<i\le n, 1\le j<n):$ $e_{i-1,i}*e_{jl}=e_{1l}$ for all $j<l\le n$;
		\item\label{e_ki*e_jj+1=e_kn} $\M^{\id,2}_{i,j} (1<i\le n, 1\le j<n, (i,j)\ne (2,n-1)):$ $e_{ki}*e_{j,j+1}=e_{kn}$ for all $1\le k<i$;
		\item\label{e_ki*e_jl=e_kl} $\M^{\id,3}_{i,j} (2<i\le j+1<n):$ $e_{ki}*e_{jl}=e_{kl}$ for all $1\le k<i$ and $j<l\le n$;
		\item\label{e_ki*e_jl=e_1n} $\M^{\id,4}_{i,j} (2<i\le n, 1\le j<n-1):$ $e_{i-1,i}*e_{j,j+1}=e_{1n}$.
	\end{enumerate} 
	Moreover, \cref{e_ii+1*e_jl=e_1l,e_ki*e_jj+1=e_kn,e_ki*e_jl=e_kl,e_ki*e_jl=e_1n} are linearly independent.
\end{lem}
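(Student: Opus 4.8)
The plan is to realize each of the four families as an instance of the construction in \cref{id-matching-structure-determined-by-a*b=c}, taking in every case $a=e_{i-1,i}$ and $b=e_{j,j+1}$ and letting $c$ be the matrix unit that records the prescribed output: $c=e_{1,j+1}$ for \cref{e_ii+1*e_jl=e_1l}, $c=e_{i-1,n}$ for \cref{e_ki*e_jj+1=e_kn}, $c=e_{i-1,j+1}$ for \cref{e_ki*e_jl=e_kl}, and $c=e_{1,n}$ for \cref{e_ki*e_jl=e_1n}. These are precisely the four ``corner'' elements that \cref{e_ii+1*e_jj+1-possible-cases} permits for the product of the generators $e_{i-1,i}$ and $e_{j,j+1}$. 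For each choice I would verify the two hypotheses of \cref{id-matching-structure-determined-by-a*b=c} and then read off $x*y=pcq$ explicitly, checking that it reproduces the stated multiplication table and vanishes on every other pair of basis elements; the $\id$-matching identity for $*$ is then automatic from that lemma.

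Hypothesis \cref{A=A^1a-oplus-I=bA^1-oplus-J} is immediate from \cref{UT_n(K)^1e_ii+1-and-e_ii+1UT_n(K)^1}: I would take the left ideal $I=UT_n(K)\cdot(\dl-e_i)$ and the right ideal $J=(\dl-e_j)\cdot UT_n(K)$. For hypothesis \cref{xa=ya=>xc=yc-and-bx=by=>cx=cy} there are two regimes. Whenever the row of $c$ coincides with that of $a$ (the cases $c=e_{i-1,n}$ and $c=e_{i-1,j+1}$) one has $c=e_{i-1,m}$ with $m>i-1$, and the implication $xa=ya\impl xc=yc$ is the first implication of \cref{fe_ii+1=ge_ii+1-and-e_jj+1f=e_jj+1g} (whose factorization proof remains valid verbatim for elements of $UT_n(K)^1$); symmetrically, whenever the column of $c$ coincides with that of $b$ (the cases $c=e_{1,j+1}$ and $c=e_{i-1,j+1}$) the second implication of the same lemma gives $bx=by\impl cx=cy$. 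The remaining regime is when $c$ sits in the first row ($c=e_{1,j+1},e_{1,n}$), affecting the right product $xc$, or in the last column ($c=e_{i-1,n},e_{1,n}$), affecting the left product $cx$: there the relevant one-sided product by $c$ sees only the scalar ($\dl$-)component of its argument, while multiplication by $a$ (respectively $b$) already pins down that scalar as the coefficient of $e_{i-1,i}$ (respectively $e_{j,j+1}$), so the implication holds again. This elementary unit-coefficient argument is the only place where one steps outside \cref{fe_ii+1=ge_ii+1-and-e_jj+1f=e_jj+1g}.

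For the linear independence I would evaluate a hypothetical relation $\sum\af_{i,j}\M^{\id,1}_{i,j}+\sum\bt_{i,j}\M^{\id,2}_{i,j}+\sum\gm_{i,j}\M^{\id,3}_{i,j}+\sum\lb_{i,j}\M^{\id,4}_{i,j}=0$ on individual pairs of basis elements $(e_{pq},e_{rs})$. The decisive bookkeeping observation is that within each family only the single index $(i,j)=(q,r)$ can act nontrivially on such a pair, and that the four families then place their outputs in the fixed positions $e_{1s}$, $e_{pn}$, $e_{ps}$, $e_{1n}$. I would peel the coefficients off in the order $\gm,\af,\bt,\lb$: evaluating on a pair with $p\ne1$ and $s\ne n$ (on which the given $\M^{\id,3}_{i,j}$ acts) makes $e_{ps}$ distinct from each of $e_{1s},e_{pn},e_{1n}$ and forces $\gm_{i,j}=0$; afterwards the coefficient of $e_{1s}$ on a pair with $s\ne n$ isolates $\af_{i,j}$, the coefficient of $e_{pn}$ on a pair with $p\ne1$ isolates $\bt_{i,j}$, and $\lb_{i,j}=0$ drops out of the surviving relation.

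The main obstacle, and the reason the families carry the side conditions $(i,j)\ne(2,n-1)$, $i>2$ and $j<n-1$, is exactly the degenerate boundary where all outputs collapse onto $e_{1n}$: this occurs when $p=1$ (forced by $q=2$) or when $s=n$ (forced by $r=n-1$), and there $\M^{\id,1}$, $\M^{\id,2}$ and $\M^{\id,4}$ genuinely coincide. I expect the delicate part of the write-up to be checking that the stated constraints delete precisely these coincidences, so that the boundary isolating pairs — in particular $(e_{1,2},e_{r,r+1})$ for $\bt_{2,r}$ and $(e_{q-1,q},e_{n-1,n})$ for $\af_{q,n-1}$ — still single out one surviving coefficient. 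Once this boundary analysis is in place the peeling argument closes, and the four families are linearly independent.
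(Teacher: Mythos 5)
Your proposal is correct and follows essentially the same route as the paper: the paper likewise realizes all four families through \cref{id-matching-structure-determined-by-a*b=c} with the identical choices $a=e_{i-1,i}$, $b=e_{j,j+1}$, the same four values of $c$, and the ideals $I=UT_n(K)\cdot(\dl-e_i)$, $J=(\dl-e_j)\cdot UT_n(K)$, citing \cref{UT_n(K)^1e_ii+1-and-e_ii+1UT_n(K)^1,fe_ii+1=ge_ii+1-and-e_jj+1f=e_jj+1g} for the hypotheses and then reading the tables off $x*y=pcq$. Your two elaborations---the scalar-component argument when $c$ lies in the first row or last column (which the paper compresses into its citation of item \cref{e_inUT_n(K)=UT_n(K)e_1j=0} of \cref{fe_ii+1=ge_ii+1-and-e_jj+1f=e_jj+1g}, together with the tacit extension of that lemma to $UT_n(K)^1$) and the coefficient-peeling proof of linear independence (where the paper merely lists the boundary coincidences such as $\M^{\id,1}_{2,n-1}=\M^{\id,2}_{2,n-1}$, $\M^{\id,3}_{2,j}=\M^{\id,1}_{2,j}$, $\M^{\id,4}_{i,n-1}=\M^{\id,1}_{i,n-1}$ and asserts independence)---are correct fillings-in of details the paper leaves implicit, not a different method.
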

\begin{proof}
	 All the products \cref{e_ii+1*e_jl=e_1l,e_ki*e_jj+1=e_kn,e_ki*e_jl=e_kl,e_ki*e_jl=e_1n} are determined by a fixed triple $(a,b,c)$ of elements of $UT_n(K)$ as in \cref{id-matching-structure-determined-by-a*b=c} with $A=UT_n(K)$, whose conditions are satisfied in view of \cref{UT_n(K)^1e_ii+1-and-e_ii+1UT_n(K)^1,fe_ii+1=ge_ii+1-and-e_jj+1f=e_jj+1g}. The elements $a=e_{i-1,i}$ and $b=e_{j,j+1}$ are common for all the products, while $c=e_{1,j+1}$ in \cref{e_ii+1*e_jl=e_1l}, $c=e_{i-1,n}$ in \cref{e_ki*e_jj+1=e_kn}, $c=e_{i-1,j+1}$ in \cref{e_ki*e_jl=e_kl} and $c=e_{1n}$ in \cref{e_ki*e_jl=e_1n}. The ideals $I$ and $J$ are $UT_n(K)\cdot (\dl-e_i)$ and $(\dl-e_j)\cdot UT_n(K)$, respectively.
	 
	 Indeed, for \cref{e_ii+1*e_jl=e_1l} we have $e_{i-1,i}=\dl\cdot e_{i-1,i}+0$ and $e_{jl}=e_{j,j+1}\cdot q+0$, where $q$ is either $\dl$ (if $l=j+1$) or $e_{j+1,l}$ (if $l>j+1$). Then $e_{i-1,i}*e_{jl}=\dl\cdot e_{1,j+1}\cdot q=e_{1l}$ by \cref{(xa+u)*(by+v)-is-xcy}. Now, let $x<y$ and $z<w$. If $y\ne i$, then $e_{xy}\in I$, so $e_{xy}*e_{zw}=0$ by \cref{(xa+u)*(by+v)-is-xcy}. Similarly, if $z\ne j$, then $e_{zw}\in J$, so $e_{xy}*e_{zw}=0$ by \cref{(xa+u)*(by+v)-is-xcy}. Finally, if $y=i$, $z=j$ and $x<i-1$, we have $e_{xy}=e_{x,i-1}\cdot e_{i-1,i}+0$ and $e_{zw}=e_{j,j+1}\cdot q+0$, where $q$ is either $\dl$ (if $w=j+1$) or $e_{j+1,w}$ (if $w>j+1$). In any case, $e_{xy}*e_{zw}=e_{x,i-1}\cdot e_{1,j+1}\cdot q=0$ by \cref{(xa+u)*(by+v)-is-xcy} because $e_{x,i-1}\cdot e_{1,j+1}=0$.
	 
	 Furthermore, for \cref{e_ki*e_jl=e_kl} we have $e_{ki}=p\cdot e_{i-1,i}+0$ and $e_{jl}=e_{j,j+1}\cdot q+0$, where $p$ is either $\dl$ (if $k=i-1$) or $e_{k,i-1}$ (if $k<i-1$) and $q$ is either $\dl$ (if $l=j+1$) or $e_{j+1,l}$ (if $l>j+1$). Then $e_{ki}*e_{jl}=p\cdot e_{i-1,j+1}\cdot q=e_{kl}$ by \cref{(xa+u)*(by+v)-is-xcy}. Now, let $x<y$ and $z<w$. If $y\ne i$, then $e_{xy}\in I$, so $e_{xy}*e_{zw}=0$ by \cref{(xa+u)*(by+v)-is-xcy}. Similarly, if $z\ne j$, then $e_{zw}\in J$, so $e_{xy}*e_{zw}=0$ by \cref{(xa+u)*(by+v)-is-xcy}. 
	 
	 The proof for \cref{e_ki*e_jj+1=e_kn} is similar\footnote{Alternatively, one can observe that \cref{e_ki*e_jj+1=e_kn} is obtained from \cref{e_ii+1*e_jl=e_1l} using the antiautomorphism $\phi(e_{kl})=e_{n-l+1,n-k+1}$ of $UT_n(K)$ as in \cref{antiauto=>structure-of-the-same-type}.} to that for \cref{e_ii+1*e_jl=e_1l}, and the proof for \cref{e_ki*e_jl=e_1n} is obvious because $e_{1n}\in\Ann(UT_n(K),\cdot)$.
	 
	 We impose the restriction $(i,j)\ne (2,n-1)$ in \cref{e_ki*e_jj+1=e_kn}, because $\M^{\id,1}_{2,n-1}=\M^{\id,2}_{2,n-1}$. Similarly, we exclude $i=2$ and $j=n-1$ from \cref{e_ki*e_jl=e_kl,e_ki*e_jl=e_1n}, because $\M^{\id,3}_{2,j}=\M^{\id,1}_{2,j}$, $\M^{\id,3}_{i,n-1}=\M^{\id,2}_{i,n-1}$, $\M^{\id,4}_{2,j}=\M^{\id,2}_{2,j}$ and $\M^{\id,4}_{i,n-1}=\M^{\id,1}_{i,n-1}$. With these restrictions on indices, the products \cref{e_ii+1*e_jl=e_1l,e_ki*e_jj+1=e_kn,e_ki*e_jl=e_kl,e_ki*e_jl=e_1n} are linearly independent.
\end{proof}

\begin{rem}
	There are $(n-1)^2$ products of the form \cref{e_ii+1*e_jl=e_1l}, $(n-1)^2-1$ products of the form \cref{e_ki*e_jj+1=e_kn}, $\frac 12(n-3)(n-2)$ products of the form \cref{e_ki*e_jl=e_kl} and $(n-2)^2$ products of the form \cref{e_ki*e_jl=e_1n}. In total, there are $\frac 72(n-2)(n-1)+1$ products of the form \cref{e_ii+1*e_jl=e_1l,e_ki*e_jj+1=e_kn,e_ki*e_jl=e_kl,e_ki*e_jl=e_1n}.
\end{rem}

\begin{rem}\label{M^id4_ij-totally-comp}
	The structure $\M^{\id,4}_{i,j}$ is a particular case of \cref{annihilator-valued-structure}, because $e_{i-1,i}$ and $e_{j,j+1}$ belong to a complement of $UT_n(K)^2$ and $e_{1n}\in \Ann(UT_n(K),\cdot)$. In particular, $\M^{\id,4}_{i,j}$ is totally compatible with $\cdot$.
\end{rem}

\begin{thrm}\label{id-matching-UT_n-full-descr}
	Let $n>2$. Then the $\id$-matching structures on $UT_n(K)$ are exactly the associative products on $UT_n(K)$ that are linear combinations of the products \cref{e_ii+1*e_jl=e_1l,e_ki*e_jj+1=e_kn,e_ki*e_jl=e_kl,e_ki*e_jl=e_1n} of \cref{structures-M^id1_ij}.
\end{thrm}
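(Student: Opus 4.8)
The plan is to establish the two inclusions separately, in each case reducing the question to the action of the products on the generators $e_{p,p+1}$, $1\le p<n$. The easy inclusion comes for free: each $\M^{\id,k}_{i,j}$ of \cref{structures-M^id1_ij} satisfies \cref{matching-ass-identity} with $\sg=\id$, and that identity is linear in the unknown product, so every linear combination of the $\M^{\id,k}_{i,j}$ again satisfies it. An associative such combination is then, by definition, an $\id$-matching structure, which gives one containment.

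For the converse I would take an arbitrary $\id$-matching structure $*$. Since it is associative and $\{e_{p,p+1}:1\le p<n\}$ generates $UT_n(K)$, \cref{*_1=*_2-on-generators} shows that $*$ is completely determined by the values $e_{p,p+1}*e_{q,q+1}$, $1\le p,q<n$. The first key step is to record how the building blocks act on generators: tracing the construction of \cref{id-matching-structure-determined-by-a*b=c}, one checks that for each admissible index pair the product $\M^{\id,k}_{p+1,q}$ kills every pair of generators except $(e_{p,p+1},e_{q,q+1})$, where its value is $e_{1,q+1}$, $e_{p,n}$, $e_{p,q+1}$ or $e_{1,n}$ according as $k=1,2,3,4$. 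Thus the $\M^{\id,k}_{i,j}$ are ``supported'' on single cells in the multiplication table of generators.

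The second step is to match coefficients. By \cref{e_ii+1*e_jj+1-possible-cases} each $e_{p,p+1}*e_{q,q+1}$ lies in the span of the very same vectors $e_{1,q+1},e_{p,n},e_{p,q+1},e_{1,n}$ (some coinciding or absent in the boundary cases). Expanding each $e_{p,p+1}*e_{q,q+1}$ in this span and taking the corresponding linear combination $*'$ of the $\M^{\id,k}_{p+1,q}$, I obtain a product $*'$ agreeing with $*$ on every pair of generators. Since $*'$ is a linear combination of products satisfying \cref{matching-ass-identity} with $\sg=\id$, it also satisfies that identity, so both $*$ and $*'$ are governed by their generator values via \cref{*_1=*_2-on-generators}; as those values agree, $*=*'$, which exhibits $*$ as the desired linear combination.

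The part needing care is the bookkeeping at the boundaries $p=1$ or $q=n-1$, where the four target vectors $e_{1,q+1},e_{p,n},e_{p,q+1},e_{1,n}$ degenerate. There I must use the index restrictions built into \cref{structures-M^id1_ij} to ensure that, for each surviving basis vector, exactly one family is made responsible for it, so that the coefficients of $*'$ are determined unambiguously. This is precisely guaranteed by the linear independence assertion at the end of \cref{structures-M^id1_ij}, and it is the only genuinely fiddly point in the argument.
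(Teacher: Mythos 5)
Your proposal is correct and takes essentially the same route as the paper's proof: the easy inclusion via linearity of \cref{matching-ass-identity} together with \cref{structures-M^id1_ij}, and the converse by combining the span constraints of \cref{e_ii+1*e_jj+1-possible-cases} on generator pairs with the uniqueness statement of \cref{*_1=*_2-on-generators}. Your cell-by-cell coefficient matching (including the boundary bookkeeping and the degenerate case $p\ge q+1$, where $\M^{\id,3}$ is undefined and $e_{p,q+1}$ is absent) merely makes explicit what the paper leaves implicit in asserting that $*$ restricted to the pairs $(e_{i,i+1},e_{j,j+1})$ is a linear combination of \cref{e_ii+1*e_jl=e_1l,e_ki*e_jj+1=e_kn,e_ki*e_jl=e_kl,e_ki*e_jl=e_1n}.
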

\begin{proof}
	Any associative linear combination of the products \cref{e_ii+1*e_jl=e_1l,e_ki*e_jj+1=e_kn,e_ki*e_jl=e_kl,e_ki*e_jl=e_1n} is an $\id$-matching structure on $UT_n(K)$ by \cref{structures-M^id1_ij}. Conversely, let $*$ be an $\id$-matching structure on $UT_n(K)$. It follows from \cref{e_ii+1*e_jj+1-possible-cases} that $*$ is a linear combination of \cref{e_ii+1*e_jl=e_1l,e_ki*e_jj+1=e_kn,e_ki*e_jl=e_kl,e_ki*e_jl=e_1n}, when restricted to the pairs $(e_{i,i+1},e_{j,j+1})$, $1\le i<n$, $1\le j<n$. Since $e_{i,i+1}$, $1\le i<n$, generate $UT_n(K)$ as a $K$-algebra, then $*$ coincides with a linear combination of \cref{e_ii+1*e_jl=e_1l,e_ki*e_jj+1=e_kn,e_ki*e_jl=e_kl,e_ki*e_jl=e_1n} on $UT_n(K)^2$ by \cref{*_1=*_2-on-generators}.
\end{proof}

\subsection{The complete classification of $\id$-matching structures on $UT_3(K)$}\label{sec-id-match-n=3}

We are going to apply \cref{id-matching-UT_n-full-descr} to classify the $\id$-matching structures on $UT_3(K)$ up to an isomorphism. The following corollary is just a specification of \cref{id-matching-UT_n-full-descr} to the case $n=3$. Observe that there are no products of the form \cref{e_ki*e_jl=e_kl} on $UT_3(K)$.

\begin{cor}\label{structures-M^id1_ij-n=3}
	The $\id$-matching structures on $UT_3(K)$ are exactly the associative products on $UT_3(K)$ that are linear combinations of the products:
	\begin{multicols}{2}
	\begin{enumerate}
		\item\label{M^id1_21-n=3} $\M^{\id,1}_{2,1}:$ $e_{12}*e_{12}=e_{12}$, $e_{12}*e_{13}=e_{13}$;
		\item\label{M^id1_22-n=3} $\M^{\id,1}_{2,2}:$ $e_{12}*e_{23}=e_{13}$;
		\item\label{M^id1_31-n=3} $\M^{\id,1}_{3,1}:$ $e_{23}*e_{12}=e_{12}$, $e_{23}*e_{13}=e_{13}$;
		\item\label{M^id1_32-n=3} $\M^{\id,1}_{3,2}:$ $e_{23}*e_{23}=e_{13}$;
		\item\label{M^id2_21-n=3} $\M^{\id,2}_{2,1}:$ $e_{12}*e_{12}=e_{13}$;
		\item\label{M^id2_31-n=3} $\M^{\id,2}_{3,1}:$ $e_{13}*e_{12}=e_{13}$, $e_{23}*e_{12}=e_{23}$;
		\item\label{M^id2_32-n=3} $\M^{\id,2}_{3,2}:$ $e_{13}*e_{23}=e_{13}$, $e_{23}*e_{23}=e_{23}$;
		\item\label{M^id4_31-n=3} $\M^{\id,4}_{3,1}:$ $e_{23}*e_{12}=e_{13}$,
	\end{enumerate}
	\end{multicols}
i.e. the associative products of the form
\begin{align}\label{id-matching-UT_3(K)-general-form}
	\begin{array}{rclrcl}
		e_{12}*e_{12}&=&\lb_1 e_{12}+\lb_5 e_{13},& e_{13}*e_{23}&=&\lb_7 e_{13},\\
		e_{12}*e_{13}&=&\lb_1 e_{13},& e_{23}*e_{12}&=&\lb_3 e_{12}+\lb_6 e_{23}+\lb_8 e_{13},\\
		e_{12}*e_{23}&=&\lb_2 e_{13},& e_{23}*e_{13}&=&\lb_3 e_{13},\\
		e_{13}*e_{12}&=&\lb_6 e_{13},& e_{23}*e_{23}&=&\lb_4 e_{13}+\lb_7 e_{23}
	\end{array}
\end{align}
for some $\{\lb_i\}_{i=1}^8\sst K$.
\end{cor}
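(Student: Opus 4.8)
The plan is to specialize \cref{id-matching-UT_n-full-descr} directly to $n=3$, so the whole argument reduces to enumerating the generating products $\M^{\id,k}_{i,j}$ and reading off their multiplication tables from \cref{structures-M^id1_ij}. First I would run through the four families with the index bounds set to $n=3$. Family \cref{e_ii+1*e_jl=e_1l} is constrained by $1<i\le 3$ and $1\le j<3$, leaving the pairs $(i,j)\in\{(2,1),(2,2),(3,1),(3,2)\}$, i.e. \cref{M^id1_21-n=3,M^id1_22-n=3,M^id1_31-n=3,M^id1_32-n=3}. Family \cref{e_ki*e_jj+1=e_kn} has the same bounds together with $(i,j)\ne(2,n-1)=(2,2)$, leaving $(i,j)\in\{(2,1),(3,1),(3,2)\}$, i.e. \cref{M^id2_21-n=3,M^id2_31-n=3,M^id2_32-n=3}. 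Family \cref{e_ki*e_jl=e_kl} is governed by $2<i\le j+1<3$, which forces both $i\ge 3$ and $j+1\le 2$; these are incompatible, so this family is empty. Finally, family \cref{e_ki*e_jl=e_1n} requires $2<i\le 3$ and $1\le j<n-1=2$, leaving only $(i,j)=(3,1)$, i.e. \cref{M^id4_31-n=3}.

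Next I would unwind the defining formulas of \cref{structures-M^id1_ij} for each of these eight products. For instance $\M^{\id,1}_{2,1}$ is $e_{12}*e_{1l}=e_{1l}$ for $1<l\le 3$, which gives $e_{12}*e_{12}=e_{12}$ and $e_{12}*e_{13}=e_{13}$; the other seven are handled identically, yielding exactly the tables displayed in \cref{M^id1_21-n=3,M^id1_22-n=3,M^id1_31-n=3,M^id1_32-n=3,M^id2_21-n=3,M^id2_31-n=3,M^id2_32-n=3,M^id4_31-n=3}. I would then assign the coefficients $\lb_1,\dots,\lb_8$ to $\M^{\id,1}_{2,1}$, $\M^{\id,1}_{2,2}$, $\M^{\id,1}_{3,1}$, $\M^{\id,1}_{3,2}$, $\M^{\id,2}_{2,1}$, $\M^{\id,2}_{3,1}$, $\M^{\id,2}_{3,2}$, $\M^{\id,4}_{3,1}$ in turn and add the tables entry by entry. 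Collecting the contributions to each basis product reproduces precisely \cref{id-matching-UT_3(K)-general-form}; the repeated coefficients there (such as $\lb_1$ in both $e_{12}*e_{12}$ and $e_{12}*e_{13}$, $\lb_3$ in both $e_{23}*e_{12}$ and $e_{23}*e_{13}$, and $\lb_6,\lb_7$ arising from the two-entry tables of $\M^{\id,2}_{3,1}$ and $\M^{\id,2}_{3,2}$) are exactly what the multi-entry products force.

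Invoking \cref{id-matching-UT_n-full-descr} then finishes the argument: the $\id$-matching structures on $UT_3(K)$ are exactly the associative products among the linear combinations of \cref{M^id1_21-n=3,M^id1_22-n=3,M^id1_31-n=3,M^id1_32-n=3,M^id2_21-n=3,M^id2_31-n=3,M^id2_32-n=3,M^id4_31-n=3}, hence exactly the associative products of the form \cref{id-matching-UT_3(K)-general-form}. There is no conceptual obstacle here, since this is a pure specialization of an already-proved theorem; the only point requiring care is the index bookkeeping inherited from \cref{structures-M^id1_ij}, namely the emptiness of family \cref{e_ki*e_jl=e_kl} for $n=3$ and the exclusion of the pair $(2,2)$ from family \cref{e_ki*e_jj+1=e_kn}.
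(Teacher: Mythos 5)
Your proposal is correct and matches the paper's treatment exactly: the paper likewise obtains this corollary as a pure specialization of \cref{id-matching-UT_n-full-descr} to $n=3$, noting only that the family \cref{e_ki*e_jl=e_kl} is empty in this case. Your index bookkeeping (including the exclusion of $(i,j)=(2,2)$ from the $\M^{\id,2}$ family) and the entry-by-entry assembly of \cref{id-matching-UT_3(K)-general-form} are all accurate, and in fact spell out details the paper leaves implicit.
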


Denote by $[a,b,c]=(a*b)*c-a*(b*c)$ the \textit{associator} of $a,b,c\in UT_3(K)$ under $*$. The proofs of the following two lemmas are straightforward, so they will be omitted.
\begin{lem}\label{associators-UT_3-id-match}
	We have the following non-trivial associators of the basis elements $\{e_{12},e_{23},e_{13}\}$ of $UT_3(K)$ under \cref{id-matching-UT_3(K)-general-form}:
	\begin{align}
		[e_{12}, e_{12}, e_{12}]&= \lb_5(\lb_6-\lb_1)e_{13},\label{[e_12_e_12_e_12]}\\
		[e_{12}, e_{12}, e_{23}]&= \lb_5\lb_7e_{13},\label{[e_12_e_12_e_23]}\\
		[e_{12}, e_{23}, e_{12}]&= -\lb_1\lb_3e_{12} - (\lb_1\lb_8 + \lb_5\lb_3)e_{13},\label{[e_12_e_23_e_12]}\\
		[e_{12}, e_{23}, e_{23}]&= -\lb_1\lb_4e_{13},\label{[e_12_e_23_e_23]}\\
		[e_{12}, e_{23}, e_{13}]&= -\lb_1\lb_3e_{13},\notag\\
		[e_{23}, e_{12}, e_{12}]&= \lb_6\lb_3e_{12} + \lb_6(\lb_6-\lb_1)e_{23} + \lb_8(2\lb_6-\lb_1)e_{13},\label{[e_23_e_12_e_12]}\\
		[e_{23}, e_{12}, e_{23}]&= \lb_7\lb_6e_{23} + (\lb_7\lb_8 + \lb_6\lb_4)e_{13},\label{[e_23_e_12_e_23]}\\
		[e_{23}, e_{12}, e_{13}]&=-[e_{13}, e_{23}, e_{12}]= \lb_6\lb_3e_{13},\notag\\
		[e_{23}, e_{23}, e_{12}]&= \lb_3(\lb_7 - \lb_3)e_{12} - \lb_6\lb_3e_{23} + \lb_8(\lb_7 - 2\lb_3)e_{13},\label{[e_23_e_23_e_12]}\\
		[e_{23}, e_{23}, e_{23}]&= \lb_4(\lb_7 - \lb_3)e_{13},\label{[e_23_e_23_e_23]}\\
		[e_{23}, e_{23}, e_{13}]&= \lb_3(\lb_7 - \lb_3)e_{13},\notag\\
		[e_{13}, e_{12}, e_{12}]&= \lb_6(\lb_6-\lb_1)e_{13},\notag\\
		[e_{13}, e_{12}, e_{23}]&= \lb_7\lb_6e_{13}.\notag
	\end{align}
\end{lem}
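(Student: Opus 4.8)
The plan is to verify each listed formula by expanding $(a*b)*c$ and $a*(b*c)$ directly from the multiplication table \cref{id-matching-UT_3(K)-general-form} and subtracting. There are $27$ triples of basis elements, so the first step is to cut this number down. The crucial observation is that $N:=\spn_K\{e_{13}\}$ is a two-sided ideal of $(UT_3(K),*)$ on which left and right $*$-multiplication by $e_{13}$ act by scalars: the table gives $e_{12}*e_{13}=\lb_1e_{13}$, $e_{23}*e_{13}=\lb_3e_{13}$, $e_{13}*e_{12}=\lb_6e_{13}$, $e_{13}*e_{23}=\lb_7e_{13}$ and $e_{13}*e_{13}=0$. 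Writing $x*e_{13}=\mu_xe_{13}$ and $e_{13}*x=\nu_xe_{13}$, one sees at once that every associator with $e_{13}$ in the middle slot vanishes, since $(x*e_{13})*y=\mu_x\nu_ye_{13}=x*(e_{13}*y)$, and that every associator with at least two arguments equal to $e_{13}$ vanishes because $e_{13}*e_{13}=0$. This disposes of all but $16$ triples, namely those in which $e_{13}$ occurs at most once and, when it does, only in the first or third slot.

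Of these $16$ I would first treat the eight with no $e_{13}$ argument, which are \cref{[e_12_e_12_e_12],[e_12_e_12_e_23],[e_12_e_23_e_12],[e_12_e_23_e_23],[e_23_e_12_e_12],[e_23_e_12_e_23],[e_23_e_23_e_12],[e_23_e_23_e_23]}: their $e_{12}$- and $e_{23}$-components can be read off from the two-dimensional quotient $(UT_3(K)/N,*)$, in which $\bar e_{12}*\bar e_{12}=\lb_1\bar e_{12}$, $\bar e_{12}*\bar e_{23}=0$, $\bar e_{23}*\bar e_{12}=\lb_3\bar e_{12}+\lb_6\bar e_{23}$ and $\bar e_{23}*\bar e_{23}=\lb_7\bar e_{23}$, while the $e_{13}$-component comes from the full table. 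A recurring feature is that the parameter $\lb_2$ always enters in a cancelling pair (for instance the terms $\lb_1\lb_2e_{13}-\lb_2\lb_1e_{13}$ in $[e_{12},e_{12},e_{23}]$), which is why it appears in none of the formulas. The remaining eight associators, with exactly one $e_{13}$ in an outer slot, are then short computations; two of them ($[e_{12},e_{12},e_{13}]$ and $[e_{13},e_{23},e_{23}]$) turn out to vanish, and the relation $[e_{23},e_{12},e_{13}]=-[e_{13},e_{23},e_{12}]$ drops out directly.

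To roughly halve the bookkeeping I would invoke the involution $\phi$ of \cref{inv-phi}, which for $n=3$ interchanges $e_{12}\leftrightarrow e_{23}$ and fixes $e_{13}$. By \cref{from-*-to-star} the product $a\star b=\phi(\phi(b)*\phi(a))$ is again of the form \cref{id-matching-UT_3(K)-general-form}, with parameters obtained from $\lb_1,\dots,\lb_8$ by the swaps $\lb_1\leftrightarrow\lb_7$, $\lb_3\leftrightarrow\lb_6$, $\lb_4\leftrightarrow\lb_5$ (leaving $\lb_2,\lb_8$ fixed); moreover a direct manipulation yields $[a,b,c]_\star=-\phi([\phi(c),\phi(b),\phi(a)]_*)$. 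This pairs the associators under reversal and the $e_{12}\leftrightarrow e_{23}$ swap, e.g. $\cref{[e_12_e_12_e_12]}\leftrightarrow\cref{[e_23_e_23_e_23]}$, $\cref{[e_12_e_12_e_23]}\leftrightarrow\cref{[e_12_e_23_e_23]}$, $\cref{[e_12_e_23_e_12]}\leftrightarrow\cref{[e_23_e_12_e_23]}$, $\cref{[e_23_e_12_e_12]}\leftrightarrow\cref{[e_23_e_23_e_12]}$, together with analogous pairs among the $e_{13}$-valued entries, so that each formula follows from its dual once the latter is established. There is no conceptual obstacle; the only genuine difficulty is the sheer volume of bilinear bookkeeping and the sign-tracking in the many three-fold products, which is precisely why organizing the computation through the ideal $N$ and the involution $\phi$ is worthwhile.
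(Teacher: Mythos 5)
Your proposal is correct, and it is in fact more organized than what the paper does: the paper simply omits the proof, declaring it a straightforward (brute-force) expansion of all $(a*b)*c-a*(b*c)$ from the table \cref{id-matching-UT_3(K)-general-form}. Your two organizing devices are both sound and genuinely laborsaving. First, $N=\spn_K\{e_{13}\}$ is indeed a two-sided $*$-ideal on which outer $*$-multiplications act by the scalars $\mu_{e_{12}}=\lb_1$, $\mu_{e_{23}}=\lb_3$, $\nu_{e_{12}}=\lb_6$, $\nu_{e_{23}}=\lb_7$ with $e_{13}*e_{13}=0$ (the zero product is suppressed in the table by the paper's convention), which correctly kills all $11$ triples with $e_{13}$ in the middle slot or occurring twice, leaving the $16$ you identify; your quotient table and the cancellation of $\lb_2$ check out in every instance (I verified, e.g., $[e_{12},e_{12},e_{23}]$, $[e_{12},e_{23},e_{12}]$, $[e_{23},e_{12},e_{12}]$ and $[e_{23},e_{23},e_{12}]$ against the lemma). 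Second, the involution identity $[a,b,c]_\star=-\phi([\phi(c),\phi(b),\phi(a)]_*)$ is correct, and the claimed parameter swap $\lb_1\leftrightarrow\lb_7$, $\lb_3\leftrightarrow\lb_6$, $\lb_4\leftrightarrow\lb_5$ with $\lb_2,\lb_8$ fixed is consistent with all the dual pairs you list. One small point of rigor: \cref{from-*-to-star} by itself only tells you that $\star$ again satisfies \cref{matching-ass-identity} with $\sg=\id$; to conclude that $\star$ is of the form \cref{id-matching-UT_3(K)-general-form} with the stated swapped parameters you should either invoke \cref{e_ii+1*e_jj+1-possible-cases} together with \cref{*_1=*_2-on-generators} (as in \cref{structures-M^id1_ij-n=3}), or, more simply, compute the eight-entry $\star$-table directly from $a\star b=\phi(\phi(b)*\phi(a))$ — a two-line check — so this is a presentational gap, not a mathematical one.
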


\begin{lem}\label{id-matching-UT_3(K)-new-base}
	Let $\phi\in\Aut(UT_3(K))$ as in \cref{aut-UT_3-on-e_12_e_23_e_13} and $*$ be given by \cref{id-matching-UT_3(K)-general-form}. Then the corresponding product $\star$ on $UT_3(K)$ defined by \cref{a-star-b=phi(phi-inv(a)*phi-inv(b))} is as follows:
	\begin{align*}
			\begin{array}{rclrcl}
				e_{12} \star e_{12} &=& a_{11}^{-1}\lb_1e_{12} + a_{11}^{-2}(a_{11}a_{22}\lb_5 - a_{31}\lb_6)e_{13}, & e_{23} \star e_{23} &=& a_{22}^{-1}\lb_7e_{23} + a_{22}^{-2}(a_{11}a_{22}\lb_4 - a_{32}\lb_3)e_{13},\\
				e_{12} \star e_{23} &=& a_{11}^{-1} a_{22}^{-1} (a_{11}a_{22}\lb_2 - a_{32}\lb_1 - a_{31}\lb_7)e_{13}, & e_{23} \star e_{13} &=& a_{22}^{-1}\lb_3e_{13},\\
				e_{12} \star e_{13} &=& a_{11}^{-1}\lb_1e_{13}, & e_{13} \star e_{12} &=& a_{11}^{-1}\lb_6e_{13},\\
				e_{23} \star e_{12} &=& a_{22}^{-1}\lb_3e_{12} + a_{11}^{-1}\lb_6e_{23} + \lb_8e_{13}, & e_{13} \star e_{23} &=& a_{22}^{-1}\lb_7e_{13}.\\
			\end{array}
	\end{align*} 
\end{lem}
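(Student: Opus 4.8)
The plan is to verify the eight entries of the table by a direct computation from the definition $a\star b=\phi(\phi\m(a)*\phi\m(b))$ in \cref{a-star-b=phi(phi-inv(a)*phi-inv(b))}. Since $*$ is bilinear and $\{e_{12},e_{23},e_{13}\}$ is a basis of $UT_3(K)$, it suffices to evaluate $\star$ on ordered pairs of basis vectors, and the only piece of preparatory data needed is the action of $\phi\m$ on this basis.

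First I would invert the automorphism \cref{aut-UT_3-on-e_12_e_23_e_13}: writing a preimage as $\af e_{12}+\bt e_{23}+\gm e_{13}$ and matching coefficients in \cref{aut-UT_3-on-e_12_e_23_e_13} yields
\begin{align*}
	\phi\m(e_{12})&=a_{11}\m e_{12}-\tfrac{a_{31}}{a_{11}^2a_{22}}e_{13}, & \phi\m(e_{23})&=a_{22}\m e_{23}-\tfrac{a_{32}}{a_{11}a_{22}^2}e_{13}, & \phi\m(e_{13})&=(a_{11}a_{22})\m e_{13}.
\end{align*}
For each ordered pair $(x,y)$ of basis elements I would then substitute these expressions, expand $\phi\m(x)*\phi\m(y)$ by bilinearity, replace each basis product by its value from \cref{id-matching-UT_3(K)-general-form} (noting that $e_{13}*e_{13}=0$ and that the four products having one factor equal to $e_{13}$ contribute only multiples of $e_{13}$, with coefficients $\lb_1,\lb_6,\lb_7,\lb_3$), and finally apply $\phi$ using \cref{aut-UT_3-on-e_12_e_23_e_13}. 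Collecting terms should reproduce each stated entry.

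The computation is routine, so the hard part will be keeping track of the cancellations; I would therefore isolate the recurring mechanism. Because each of $\phi\m(e_{12})$ and $\phi\m(e_{23})$ carries an $e_{13}$-tail, the ``diagonal'' products $e_{12}*e_{13}=\lb_1e_{13}$, $e_{23}*e_{13}=\lb_3e_{13}$, $e_{13}*e_{12}=\lb_6e_{13}$, $e_{13}*e_{23}=\lb_7e_{13}$ generate cross terms that cancel exactly the $e_{13}$-component introduced when $\phi$ is applied to the $e_{12}$- or $e_{23}$-part of the answer. For instance, in $e_{12}\star e_{12}$ the contribution $a_{11}^{-2}a_{31}\lb_1$ coming from the $e_{13}$-tail of $\phi(e_{12})=a_{11}e_{12}+a_{31}e_{13}$ cancels the cross term produced by $\phi\m(e_{12})*\phi\m(e_{12})$ through $e_{12}*e_{13}$, leaving $a_{11}^{-2}(a_{11}a_{22}\lb_5-a_{31}\lb_6)$ as the $e_{13}$-coefficient; the same $\lb_7$- and $\lb_3$-cancellations occur for $e_{23}\star e_{23}$ and $e_{23}\star e_{12}$.

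The four entries having $e_{13}$ as one argument are the easiest, since $\phi\m(e_{13})$ is a scalar multiple of $e_{13}$: here the expansion collapses to a single surviving term, and the scalars $(a_{11}a_{22})\m$ from $\phi\m$ and $a_{11}a_{22}$ from $\phi$ partly cancel, giving $e_{12}\star e_{13}=a_{11}\m\lb_1e_{13}$, $e_{13}\star e_{12}=a_{11}\m\lb_6e_{13}$, $e_{13}\star e_{23}=a_{22}\m\lb_7e_{13}$ and $e_{23}\star e_{13}=a_{22}\m\lb_3e_{13}$ at once. Carrying out the analogous expansion for the four remaining entries $e_{12}\star e_{12}$, $e_{23}\star e_{23}$, $e_{12}\star e_{23}$ and $e_{23}\star e_{12}$, and simplifying via the cancellations above, then yields the full table.
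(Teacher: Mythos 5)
Your proposal is correct and is exactly the ``straightforward calculation'' that the paper deliberately omits: your formulas for $\phi\m$ on the basis are right, and expanding $\phi(\phi\m(x)*\phi\m(y))$ with them reproduces all eight table entries, including the cancellation mechanism you isolate (e.g.\ the $a_{11}^{-2}a_{31}\lb_1$ term cancelling in $e_{12}\star e_{12}$, and $\lb_8$ surviving unscaled in $e_{23}\star e_{12}$). The only nitpick is cosmetic: in $e_{23}\star e_{12}$ it is the $\lb_3$- and $\lb_6$-cross terms that cancel (not $\lb_7$), but this does not affect the validity of the argument.
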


\begin{prop}\label{id-matching-UT_3-full-descr}
	Any $\id$-matching structure on $UT_3(K)$ is isomorphic to exactly one of the following structures:
	\begin{enumerate}
		\item\label{star-for-lb_3-ne-0} $e_{13}\star e_{23} = e_{13},\ e_{23}\star e_{12} = e_{12},\ e_{23}\star e_{13} = e_{13},\ e_{23}\star e_{23} = e_{23}$;
		\item\label{star-for-lb_3=0-lb_7-ne-0-lb_1-ne-0} $e_{12}\star e_{12} = e_{12},\ e_{12}\star e_{13} = e_{13},\ e_{13}\star e_{23} = e_{13},\ e_{23}\star e_{23} = e_{23}$;
		\item\label{star-for-lb_3=0-lb_7-ne-0-lb_1-is-0} $e_{13}\star e_{23} = e_{13},\ e_{23}\star e_{23} = e_{23}$;
		\item\label{star-for-lb_3=lb_7=0-lb_4-ne-0} $e_{12}\star e_{12} = \af e_{13},\ e_{12}\star e_{23} = \bt e_{13},\ e_{23}\star e_{12} = \gm e_{13},\ e_{23}\star e_{23} = e_{13}$;
		\item\label{star-for-lb_3=lb_7=lb_4=0-lb_6-ne-0} $e_{12}\star e_{12} = e_{12},\ e_{12}\star e_{13} = e_{13},\ e_{23}\star e_{12} = e_{23},\ e_{13}\star e_{12} = e_{13}$;
		\item\label{star-for-lb_3=lb_7=lb_4=0-lb_6=0-lb_1-ne-0} $e_{12}\star e_{12} = e_{12},\ e_{12}\star e_{13} = e_{13}$;
		\item\label{star-for-lb_3=lb_7=lb_4=0-lb_6=lb_1=0-lb_5-ne-0} $e_{12}\star e_{12} = e_{13},\ e_{12} \star e_{23} = \af e_{13},\ e_{23} \star e_{12} = \bt e_{13}$;
		\item\label{star-for-lb_3=lb_7=lb_4=0-lb_6=lb_1=0-lb_5=0} $e_{12} \star e_{23} = \af e_{13},\ e_{23} \star e_{12} = \bt e_{13}$.
	\end{enumerate}
\end{prop}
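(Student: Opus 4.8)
The plan is to combine three ingredients: the general form of an $\id$-matching structure, the associativity constraints, and the automorphism action recorded in \cref{id-matching-UT_3(K)-new-base}. By \cref{structures-M^id1_ij-n=3}, every $\id$-matching structure on $UT_3(K)$ is of the form \cref{id-matching-UT_3(K)-general-form} for some $\lb_1,\dots,\lb_8\in K$, and such a product is associative precisely when all the associators listed in \cref{associators-UT_3-id-match} vanish. I would equate each associator to zero and read off the coefficients of $e_{12}$, $e_{23}$ and $e_{13}$, producing a system of quadratic equations in $\lb_1,\dots,\lb_8$, the most useful of which are $\lb_1\lb_3=\lb_6\lb_3=\lb_3(\lb_7-\lb_3)=0$, $\lb_5\lb_7=\lb_1\lb_4=0$, $\lb_6(\lb_6-\lb_1)=\lb_7\lb_6=0$ and $\lb_1\lb_8+\lb_5\lb_3=0$. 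Before solving it, I would record which quantities are isomorphism invariants by inspecting the $\star$-table of \cref{id-matching-UT_3(K)-new-base}: the relevant coefficients transform by nonzero scalars, so the vanishing of each of $\lb_1,\lb_3,\lb_6,\lb_7$ is preserved, and in the appropriate subcases so is the vanishing of $\lb_4$ and $\lb_5$; moreover, when $\lb_1=\lb_7=0$ (resp. $\lb_3=\lb_6=0$) the coefficient $\lb_2$ (resp. $\lb_8$) is left unchanged, and the product $\lb_4\lb_5$ is always unchanged.

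With these invariants in hand, I would run a case analysis following the hierarchy $\lb_3,\lb_7,\lb_4,\lb_6,\lb_1,\lb_5$ that underlies the eight items of the statement. In each branch the remaining associativity equations force the other parameters either to vanish or to coincide; for instance $\lb_3\ne0$ forces $\lb_1=\lb_5=\lb_6=\lb_8=0$ and $\lb_7=\lb_3$, while $\lb_3=\lb_7=0$ together with $\lb_4\ne0$ forces $\lb_1=\lb_6=0$. This leaves at most a handful of free parameters in each case. I would then apply \cref{id-matching-UT_3(K)-new-base}, choosing $a_{11},a_{22}\in K^*$ to rescale the surviving nonzero invariant among $\lb_1,\lb_3,\lb_4,\lb_5,\lb_7$ to $1$, and choosing $a_{31},a_{32}$ to cancel $\lb_2$ and the spurious $e_{13}$-components: whenever a cancelling term $a_{32}\lb_1$ or $a_{31}\lb_7$ is available (i.e. $\lb_1\ne0$ or $\lb_7\ne0$), the parameter $\lb_2$ can be removed, and analogously one clears the $e_{13}$-part of $e_{23}\star e_{23}$ when $\lb_1$ or $\lb_3$ is nonzero. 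This yields precisely the normal forms \cref{star-for-lb_3-ne-0,star-for-lb_3=0-lb_7-ne-0-lb_1-ne-0,star-for-lb_3=0-lb_7-ne-0-lb_1-is-0,star-for-lb_3=lb_7=0-lb_4-ne-0,star-for-lb_3=lb_7=lb_4=0-lb_6-ne-0,star-for-lb_3=lb_7=lb_4=0-lb_6=0-lb_1-ne-0,star-for-lb_3=lb_7=lb_4=0-lb_6=lb_1=0-lb_5-ne-0,star-for-lb_3=lb_7=lb_4=0-lb_6=lb_1=0-lb_5=0}.

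The delicate branches are \cref{star-for-lb_3=lb_7=0-lb_4-ne-0,star-for-lb_3=lb_7=lb_4=0-lb_6=lb_1=0-lb_5-ne-0,star-for-lb_3=lb_7=lb_4=0-lb_6=lb_1=0-lb_5=0}, where the associativity relations force $\lb_1=\lb_3=\lb_6=\lb_7=0$. Here the cancelling terms $a_{32}\lb_1$ and $a_{31}\lb_7$ both vanish, so neither $\lb_2$ nor $\lb_8$ can be eliminated, and they survive as the moduli $\bt,\gm$ (together with $\af=\lb_4\lb_5$ in \cref{star-for-lb_3=lb_7=0-lb_4-ne-0}). By the invariance noted in the first paragraph, distinct values of these moduli yield non-isomorphic structures, so within each such family the listed representative is unique. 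Pairwise non-isomorphism of the eight types then follows at once, since the invariant vanishing pattern of $(\lb_1,\lb_3,\lb_4,\lb_5,\lb_6,\lb_7)$ is different for each one. I expect the main obstacle to be the bookkeeping of this case tree together with the verification, in the three degenerate branches, that the available automorphisms have been completely exhausted, so that the residual parameters are genuine and complete invariants rather than artefacts of an incomplete reduction.
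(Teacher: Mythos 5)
Your proposal is correct and takes essentially the same route as the paper's proof: the general form \cref{id-matching-UT_3(K)-general-form}, the associator relations of \cref{associators-UT_3-id-match}, the identical case hierarchy $\lb_3,\lb_7,\lb_4,\lb_6,\lb_1,\lb_5$, and normalization via \cref{id-matching-UT_3(K)-new-base}, with your invariant bookkeeping merely making explicit what the paper leaves as ``directly seen''. One small caveat: $\lb_4\lb_5$ is \emph{not} invariant for arbitrary parameter values (take $\lb_4=\lb_5=0$ with $\lb_3,\lb_6\ne 0$ and generic $a_{31},a_{32}$), only in the branch $\lb_3=\lb_6=0$ where you actually use it --- which the associativity constraints do guarantee there.
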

\begin{proof}
	Let $*$ be an $\id$-matching structure on $UT_3(K)$. By \cref{structures-M^id1_ij-n=3} the product $*$ is of the form \cref{id-matching-UT_3(K)-general-form}. 
	
	\textit{Case 1.} $\lb_3\ne 0$. It follows from \cref{[e_12_e_23_e_12]} that $\lb_1=\lb_5=0$. Furthermore, by \cref{[e_23_e_23_e_12]} we have $\lb_7=\lb_3$ and $\lb_6=\lb_8=0$. Let $\phi\in\Aut(UT_3(K))$ given by \cref{aut-UT_3-on-e_12_e_23_e_13}. Then by \cref{id-matching-UT_3(K)-new-base} the automorphism $\phi$ defines the following isomorphic structure:
	\begin{align*}
		\begin{array}{rclrcl}
			e_{12} \star e_{23} &=& a_{11}^{-1} a_{22}^{-1} (a_{11}a_{22}\lb_2 - a_{31}\lb_3)e_{13}, & e_{23} \star e_{13} &=& a_{22}^{-1}\lb_3e_{13},\\
			e_{13} \star e_{23} &=& a_{22}^{-1}\lb_3e_{13}, & e_{23} \star e_{23} &=& a_{22}^{-1}\lb_3e_{23} + a_{22}^{-2}(a_{11}a_{22}\lb_4 - a_{32}\lb_3)e_{13}.\\
			e_{23} \star e_{12} &=& a_{22}^{-1}\lb_3e_{12},\\
		\end{array}
	\end{align*}
Choosing $a_{11}=1$, $a_{22}=\lb_3$, $a_{31}=\lb_2$ and $a_{32}=\lb_4$, we get \cref{star-for-lb_3-ne-0}.

\textit{Case 2.} $\lb_3=0$ and $\lb_7\ne 0$. Then \cref{[e_12_e_12_e_23]} implies $\lb_5=0$.
Furthermore, \cref{[e_23_e_12_e_23]} gives $\lb_6=\lb_8=0$. Finally, \cref{[e_23_e_23_e_23]} yields $\lb_4=0$. Let $\phi\in\Aut(UT_3(K))$ given by \cref{aut-UT_3-on-e_12_e_23_e_13}. Then by \cref{id-matching-UT_3(K)-new-base} the automorphism $\phi$ defines the following isomorphic structure
\begin{align*}
	\begin{array}{rclrcl}
		e_{12} \star e_{12} &=& a_{11}^{-1}\lb_1e_{12},& e_{13} \star e_{23} &=& a_{22}^{-1}\lb_7e_{13},\\
		e_{12} \star e_{13} &=& a_{11}^{-1}\lb_1e_{13}, & e_{23} \star e_{23} &=& a_{22}^{-1}\lb_7e_{23}.\\
		e_{12} \star e_{23} &=& a_{11}^{-1} a_{22}^{-1} (a_{11}a_{22}\lb_2 - a_{32}\lb_1 - a_{31}\lb_7)e_{13},
	\end{array}
\end{align*}
If $\lb_1\ne 0$, then choosing $a_{11}=\lb_1$, $a_{22}=\lb_7$, $a_{31} = \lb_1\lb_2$ and $a_{32}=0$, we get the structure \cref{star-for-lb_3=0-lb_7-ne-0-lb_1-ne-0}. Otherwise, choosing $a_{11}=1$, $a_{22}=\lb_7$, $a_{31}=\lb_2$  and $a_{32}=0$, we get \cref{star-for-lb_3=0-lb_7-ne-0-lb_1-is-0}.

\textit{Case 3.} $\lb_3=\lb_7=0$ and $\lb_4\ne 0$. Then \cref{[e_12_e_23_e_23]} implies $\lb_1=0$ and \cref{[e_23_e_12_e_23]} gives $\lb_6=0$. Let $\phi\in\Aut(UT_3(K))$ given by \cref{aut-UT_3-on-e_12_e_23_e_13}. Then by \cref{id-matching-UT_3(K)-new-base} the automorphism $\phi$ defines the following isomorphic structure
\begin{align*}
		e_{12} \star e_{12} = a_{11}^{-1}a_{22}\lb_5e_{13},\  e_{23} \star e_{12} = \lb_8e_{13},\
		e_{12} \star e_{23} = \lb_2e_{13},\  e_{23} \star e_{23} = a_{11}a_{22}^{-1}\lb_4e_{13}.
\end{align*}
Choosing $a_{11}=1$ and $a_{22}=\lb_4$, we get the family \cref{star-for-lb_3=lb_7=0-lb_4-ne-0} of pairwise non-isomorphic structures.

\textit{Case 4.} $\lb_3=\lb_7=\lb_4=0$ and $\lb_6\ne 0$. Then \cref{[e_23_e_12_e_12]} implies $\lb_1=\lb_6$ and $\lb_8=0$. Let $\phi\in\Aut(UT_3(K))$ given by \cref{aut-UT_3-on-e_12_e_23_e_13}. Then by \cref{id-matching-UT_3(K)-new-base} the automorphism $\phi$ defines the following isomorphic structure
\begin{align*}
	\begin{array}{rclrcl}
		e_{12} \star e_{12} &=& a_{11}^{-1}\lb_6e_{12} + a_{11}^{-2}(a_{11}a_{22}\lb_5 - a_{31}\lb_6)e_{13}, & e_{23} \star e_{12} &=& a_{11}^{-1}\lb_6e_{23},\\
		e_{12} \star e_{23} &=& (\lb_2 - a_{11}^{-1} a_{22}^{-1}a_{32}\lb_6)e_{13}, & e_{13} \star e_{12} &=& a_{11}^{-1}\lb_6e_{13}.\\
		e_{12} \star e_{13} &=& a_{11}^{-1}\lb_6e_{13},
	\end{array}
\end{align*}
Choosing $a_{11}=\lb_6$, $a_{22}=1$, $a_{31}=\lb_5$ and $a_{32}=\lb_2$, we get the structure \cref{star-for-lb_3=lb_7=lb_4=0-lb_6-ne-0}.

\textit{Case 5.} $\lb_3=\lb_7=\lb_4=\lb_6=0$ and $\lb_1\ne 0$. Then \cref{[e_12_e_12_e_12]} implies $\lb_5=0$ and \cref{[e_12_e_23_e_12]} implies $\lb_8=0$. Let $\phi\in\Aut(UT_3(K))$ given by \cref{aut-UT_3-on-e_12_e_23_e_13}. Then by \cref{id-matching-UT_3(K)-new-base} the automorphism $\phi$ defines the following isomorphic structure
\begin{align*}
	e_{12} \star e_{12} = a_{11}^{-1}\lb_1e_{12},\  e_{12} \star e_{13} = a_{11}^{-1}\lb_1e_{13},\ 
	e_{12} \star e_{23} = (\lb_2 - a_{11}^{-1} a_{22}^{-1}a_{32}\lb_1)e_{13}.
\end{align*}
Choosing $a_{11}=\lb_1$, $a_{22}=1$ and $a_{32}=\lb_2$, we get the structure \cref{star-for-lb_3=lb_7=lb_4=0-lb_6=0-lb_1-ne-0}.

\textit{Case 6.} $\lb_3=\lb_7=\lb_4=\lb_6=\lb_1=0$. Then $*$ is associative. Let $\phi\in\Aut(UT_3(K))$ given by \cref{aut-UT_3-on-e_12_e_23_e_13}. Then by \cref{id-matching-UT_3(K)-new-base} the automorphism $\phi$ defines the following isomorphic structure
\begin{align*}
	e_{12} \star e_{12} = a_{11}^{-1}a_{22}\lb_5e_{13},\ e_{12} \star e_{23} = \lb_2e_{13},\ e_{23} \star e_{12} = \lb_8e_{13}.
\end{align*}
If $\lb_5\ne 0$, then choosing $a_{11}=\lb_5$ and $a_{22}=1$, we get the family \cref{star-for-lb_3=lb_7=lb_4=0-lb_6=lb_1=0-lb_5-ne-0} of pairwise non-isomorphic structures. Otherwise, we get the family \cref{star-for-lb_3=lb_7=lb_4=0-lb_6=lb_1=0-lb_5=0} of pairwise non-isomorphic structures.

Conversely, each product of the list \cref{star-for-lb_3-ne-0,star-for-lb_3=0-lb_7-ne-0-lb_1-ne-0,star-for-lb_3=0-lb_7-ne-0-lb_1-is-0,star-for-lb_3=lb_7=0-lb_4-ne-0,star-for-lb_3=lb_7=lb_4=0-lb_6-ne-0,star-for-lb_3=lb_7=lb_4=0-lb_6=0-lb_1-ne-0,star-for-lb_3=lb_7=lb_4=0-lb_6=lb_1=0-lb_5-ne-0,star-for-lb_3=lb_7=lb_4=0-lb_6=lb_1=0-lb_5=0} is a particular case of \cref{id-matching-UT_3(K)-general-form}, and it is associative by \cref{associators-UT_3-id-match}, so it is an $\id$-matching structure on $UT_3(K)$ by \cref{structures-M^id1_ij-n=3}. Moreover, it is directly seen by \cref{id-matching-UT_3(K)-new-base} that all the listed structures are pairwise non-isomorphic.
\end{proof}

	\section{$(12)$-matching structures on $UT_n(K)$}\label{sec-(12)-match}
	
	\subsection{Some classes of $(12)$-matching structures on $UT_n(K)$}\label{sec-(12)-matching-classes}
	We first point out the structures that will appear in the classification below. Clearly, $\M^{\id,4}_{i,j}$ from \cref{structures-M^id1_ij} is $(12)$-matching with $\cdot$ for all $1<i\le n$ and $1\le j<n$ by \cref{M^id4_ij-totally-comp}.
%
	
	\begin{lem}
		For each fixed $1\le i<n$ the bilinear product
		\begin{align}\label{e_12*e_ii+1=e_1n-1-e_ii+1*e_n-1n=e_2n}
			e_{12}*e_{i,i+1}=e_{1,n-1},\ e_{i,i+1}*e_{n-1,n}=e_{2n}
		\end{align}
		is a $(12)$-matching structure on $UT_n(K)$.
	\end{lem}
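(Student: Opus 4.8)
The plan is to check directly that the product $*$ defined by \cref{e_12*e_ii+1=e_1n-1-e_ii+1*e_n-1n=e_2n}, with all unlisted products of basis matrix units set to zero, is associative and satisfies \cref{matching-ass-identity} with $\sg=(12)$. Taking $\cdot_1=\cdot$ and $\cdot_2=*$, the latter amounts to the two identities obtained from $\sg(1)=2$, $\sg(2)=1$, so altogether I must verify, for all $a,b,c\in UT_n(K)$,
\begin{align*}
	(a\cdot b)*c=a*(b\cdot c),\quad (a*b)\cdot c=a\cdot(b*c),\quad (a*b)*c=a*(b*c).
\end{align*}
Since all three are multilinear, it suffices to test them on triples of basis matrix units.

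The first thing I would record is that $x*y=0$ whenever $x\in UT_n(K)^2$ or $y\in UT_n(K)^2$: the only basis elements appearing as a left (resp.\ right) factor of a nonzero $*$-product are $e_{12},e_{i,i+1}$ (resp.\ $e_{i,i+1},e_{n-1,n}$), and these are all generators $e_{j,j+1}$, hence lie outside $UT_n(K)^2=\spn_K\{e_{pq}:q\ge p+2\}$. This at once gives the first identity, because $a\cdot b$ and $b\cdot c$ always lie in $UT_n(K)^2$, so both sides vanish. It also settles associativity for $n\ge 4$: there $a*b$ and $b*c$ lie in $\spn_K\{e_{1,n-1},e_{2n}\}\sst UT_n(K)^2$, whence $(a*b)*c=a*(b*c)=0$. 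For $n=3$ the two outputs $e_{1,n-1}=e_{12}$ and $e_{2n}=e_{23}$ are themselves generators, so associativity must be checked separately for the two structures arising from $i=1$ and $i=2$; this is a short finite computation.

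The heart of the argument is the mixed identity $(a*b)\cdot c=a\cdot(b*c)$, which I would handle by showing each side is nonzero for exactly one basis triple. The left-hand side is nonzero only if $a*b\ne 0$, i.e.\ $(a,b)\in\{(e_{12},e_{i,i+1}),(e_{i,i+1},e_{n-1,n})\}$; in the second case $(a*b)\cdot c=e_{2n}\cdot c=0$, and in the first $e_{1,n-1}\cdot c$ is nonzero only for $c=e_{n-1,n}$, giving $e_{1n}$. Dually, the right-hand side is nonzero only if $b*c\ne 0$; the case $(b,c)=(e_{12},e_{i,i+1})$ yields $a\cdot e_{1,n-1}=0$, while $(b,c)=(e_{i,i+1},e_{n-1,n})$ yields $a\cdot e_{2n}$, which is nonzero only for $a=e_{12}$, again giving $e_{1n}$. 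Hence both sides vanish off the single triple $(e_{12},e_{i,i+1},e_{n-1,n})$, where $e_{1,n-1}\cdot e_{n-1,n}=e_{1n}=e_{12}\cdot e_{2n}$, so the identity holds.

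I expect the main obstacle to be bookkeeping of degenerate index coincidences rather than any conceptual difficulty: $i=1$ forces $e_{12}=e_{i,i+1}$, $i=n-1$ forces $e_{i,i+1}=e_{n-1,n}$, and $n=3$ collapses the outputs $e_{1,n-1},e_{2n}$ onto generators. One must confirm that in each coincidence the case analysis still assigns the same value to both sides, which it does since $e_{1,n-1}\cdot e_{n-1,n}=e_{12}\cdot e_{2n}=e_{1n}$ together with the relevant vanishing relations hold for every $n\ge 3$, and that the separate $n=3$ associativity verification is carried out for both admissible values of $i$.
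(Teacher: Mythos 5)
Your proposal is correct and follows essentially the same route as the paper: both use that $e_{12},e_{i,i+1},e_{n-1,n}$ lie in a complement of $UT_n(K)^2$ to kill $(a\cdot b)*c=a*(b\cdot c)=0$, both isolate $(e_{12},e_{i,i+1},e_{n-1,n})$ as the unique triple where $(a*b)\cdot c$ and $a\cdot(b*c)$ are nonzero (each equal to $e_{1n}$), and both get associativity for free when $n>3$. The only divergence is cosmetic: for $n=3$ the paper settles associativity by identifying the two structures with subalgebras of $T_2(K)$ generated by $\{e_1,e_{12}\}$ and $\{e_2,e_{12}\}$, whereas you defer to a direct finite check, which indeed goes through for both $i=1$ and $i=2$.
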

	\begin{proof}
		Since $e_{12}, e_{i,i+1},e_{n-1,n}\in\spn_K\{e_{i,i+1}: 1\le i<n\}$, which is a complement of $UT_n(K)^2$, then $(a\cdot b)*c=a*(b\cdot c)=0$ for all $a,b,c\in UT_n(K)$.
			
		Now, observe that 
		\begin{align*}
			(e_{xy}*e_{zw})\cdot e_{uv}\ne 0\iff e_{xy}\cdot(e_{zw}*e_{uv})\ne 0\iff (e_{xy},e_{zw},e_{uv})=(e_{12},e_{i,i+1},e_{n-1,n}),
		\end{align*}
		in which case 
		\begin{align*}
			(e_{xy}*e_{zw})\cdot e_{uv}&=(e_{12}*e_{i,i+1})\cdot e_{n-1,n}=e_{1,n-1}\cdot e_{n-1,n}=e_{1n},\\
			e_{xy}\cdot(e_{zw}*e_{uv})&=e_{12}\cdot(e_{i,i+1}*e_{n-1,n})=e_{12}\cdot e_{2n}=e_{1n}.
		\end{align*}
		Thus, $(a*b)\cdot c=a\cdot(b*c)$ for all $a,b,c\in UT_n(K)$.
		
		For the associativity observe that $(a*b)*c=a*(b*c)=0$ for all $a,b,c\in UT_n(K)$, if $n>3$. Now, on $UT_3(K)$ we have $2$ structures of the form \cref{e_12*e_ii+1=e_1n-1-e_ii+1*e_n-1n=e_2n}, corresponding to $i=1$ and $i=2$. It is easily seen that they are isomorphic to the subalgebras of $(T_2(K),\cdot)$ generated by $\{e_1,e_{12}\}$ and $\{e_2,e_{12}\}$, respectively.
	\end{proof}
	
	\begin{lem}
		For each fixed $1\le i<j\le n$ with $j>i+1$ the bilinear product
		\begin{align}\label{e_ik*e_kj=e_ij-fixed-i-and-j}
			e_{ik}*e_{kj}=e_{1n},\ i<k<j,
		\end{align}
		is a $(12)$-matching structure on $UT_n(K)$.
	\end{lem}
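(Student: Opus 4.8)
The plan is to check directly that the product \eqref{e_ik*e_kj=e_ij-fixed-i-and-j} is associative and satisfies both equations of \cref{matching-ass-identity} with $\sg=(12)$, namely $(a\cdot b)*c=a*(b\cdot c)$ and $(a*b)\cdot c=a\cdot(b*c)$. By bilinearity it suffices to test these on basis elements $a=e_{xy}$, $b=e_{zw}$, $c=e_{uv}$, and the whole argument rests on two elementary observations. First, $e_{1n}\in\Ann(UT_n(K),\cdot)$, so $e_{1n}\cdot d=d\cdot e_{1n}=0$ for every $d\in UT_n(K)$. Second, $e_{1n}$ can never occur as a nonzero left or right factor of $*$: a nonzero value $e_{pq}*e_{qr}=e_{1n}$ forces $p=i$, $r=j$ and $i<q<j$, which is incompatible with $e_{1n}$ being the left factor (i.e.\ $(p,q)=(1,n)$, giving $1<n<j$) or the right factor (i.e.\ $(q,r)=(1,n)$, giving $i<1<n$).

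Given these, the equation $(a*b)\cdot c=a\cdot(b*c)$ and the associativity of $*$ are immediate, and hold uniformly in $n$. Indeed, every nonzero value of $*$ equals $e_{1n}$, so in $(a*b)\cdot c$, $a\cdot(b*c)$, $(a*b)*c$ and $a*(b*c)$ the inner $*$-product feeds $e_{1n}$ (or $0$) into an outer operation; each such expression then vanishes, the $\cdot$-products by the first observation and the outer $*$-products by the second. Hence all four are $0$.

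The only equation with genuine content is $(a\cdot b)*c=a*(b\cdot c)$, and this is the step I would treat most carefully. On the left, $e_{xy}\cdot e_{zw}=\dl_{yz}e_{xw}$ collapses the expression to $e_{xw}*e_{uv}$, nonzero exactly when $x=i$, $w=u$, $v=j$ and $i<w<j$; since the three matrix units are genuine (so $x<y=z<w$ and $u<v$), this pins the triple down to $a=e_{i,y}$, $b=e_{y,w}$, $c=e_{w,j}$ with $i<y<w<j$. A symmetric computation on the right, where $\dl_{wu}$ collapses $b\cdot c$ and the surviving product is $e_{xy}*e_{zv}$ (nonzero precisely when $x=i$, $y=z$, $v=j$, $i<y<j$), yields the very same triple. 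On it both sides equal $e_{1n}$ — the left through $e_{i,w}*e_{w,j}$, the right through $e_{i,y}*e_{y,j}$ — and off it both vanish. Conceptually, a nonzero value records a chain $i\to y\to w\to j$; the product $\cdot$ merely fuses two adjacent links, after which $*$ detects the endpoints $(i,j)$ together with some interior splitting point strictly between them, so the two bracketings agree. I do not expect a real obstacle here; the only care needed is to track the strict inequalities so that the two constraint sets coincide exactly rather than merely overlap, which — using that all three basis elements are honest matrix units — they do.
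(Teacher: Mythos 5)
Your proof is correct and follows essentially the same route as the paper: both verify $(a*b)\cdot c=a\cdot(b*c)=0$ via $e_{1n}\in\Ann(UT_n(K),\cdot)$, establish associativity from the fact that $e_{1n}$ never appears as a factor in a nonzero $*$-product, and check $(a\cdot b)*c=a*(b\cdot c)$ by matching the supporting triples $(e_{iy},e_{yw},e_{wj})$, $i<y<w<j$, on which both sides equal $e_{1n}$. Your careful tracking of the strict inequalities to show the two constraint sets coincide exactly is precisely the content of the paper's two symmetric support computations.
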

	\begin{proof}
		Clearly, $(a*b)\cdot c=a\cdot(b*c)=0$ for all $a,b,c\in UT_n(K)$, because $e_{1n}\in \Ann(UT_n(K),\cdot)$. 
		
		Now, 
		\begin{align*}
			(e_{xy}\cdot e_{zw})*e_{uv}\ne 0&\iff e_{xy}\cdot e_{zw}=e_{ik}\text{ and }e_{uv}=e_{kj}\text{ for some }i<k<j\\
			&\iff(e_{xy},e_{zw},e_{uv})=(e_{iy},e_{yk},e_{kj})\text{ for some }i<k<j,
		\end{align*}
		in which case
		\begin{align*}
			(e_{xy}\cdot e_{zw})*e_{uv}=(e_{iy}\cdot e_{yk})*e_{kj}=e_{ik}*e_{kj}=e_{1n},\
			e_{xy}*(e_{zw}\cdot e_{uv})=e_{iy}*(e_{yk}\cdot e_{kj})=e_{iy}*e_{yj}=e_{1n}.
		\end{align*}
		
		Symmetrically, 
		\begin{align*}
			e_{xy}*(e_{zw}\cdot e_{uv})\ne 0&\iff e_{xy}=e_{ik}\text{ and }e_{zw}\cdot e_{uv}=e_{kj}\text{ for some }i<k<j\\
			&\iff(e_{xy},e_{zw},e_{uv})=(e_{ik},e_{kw},e_{wj})\text{ for some }i<k<j,
		\end{align*}
		in which case
		\begin{align*}
			e_{xy}*(e_{zw}\cdot e_{uv})=e_{ik}*(e_{kw}\cdot e_{wj})=e_{ik}*e_{kj}=e_{1n},\
			(e_{xy}\cdot e_{zw})*e_{uv}=(e_{ik}\cdot e_{kw})*e_{wj}=e_{iw}*e_{wj}=e_{1n}.
		\end{align*}
		
		For the associativity observe that $a*e_{1n}=e_{1n}*a=0$ for all $a\in UT_n(K)$, so $(a*b)*c=a*(b*c)=0$ for all $a,b,c\in UT_n(K)$.
	\end{proof}
	
	\begin{lem}
		The bilinear product
		\begin{align}\label{e_ik*e_kj=e_ij-(i_j)-ne-(1_n)}
			e_{ik}*e_{kj}=e_{ij}, 1\le i<k<j\le n,\ (i,j)\ne(1,n), 
		\end{align}
		is a $(12)$-matching structure on $UT_n(K)$.
	\end{lem}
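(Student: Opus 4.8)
The plan is to show that the product $*$ defined by \cref{e_ik*e_kj=e_ij-(i_j)-ne-(1_n)} satisfies both equations of \cref{matching-ass-identity} with $\sg=(12)$, namely $(a*b)\cdot c=a\cdot(b*c)$ and $(a\cdot b)*c=a*(b\cdot c)$, as well as associativity of $*$ itself. Since all three conditions are multilinear in $a,b,c$, it suffices to verify each identity on basis elements $e_{xy}, e_{zw}, e_{uv}$ of $UT_n(K)$, following exactly the pattern of the two preceding lemmas. The new feature here is that $*$ is no longer annihilator-valued: $e_{ik}*e_{kj}=e_{ij}$ can land anywhere strictly above the diagonal, so the products $(a*b)\cdot c$ and $a*(b\cdot c)$ need not vanish, and I must track the actual matrix-unit outputs rather than simply argue both sides are zero.

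**The $(12)$-matching identities.** First I would verify $(a\cdot b)*c=a*(b\cdot c)$. A nonzero contribution on the left requires $e_{xy}\cdot e_{zw}=e_{iy}$ with $y=z$, say $e_{xy}\cdot e_{yw}=e_{xw}$, and then $e_{xw}*e_{uv}\ne 0$ forces $w=u$ and the triple to be $(e_{xy},e_{yw},e_{wv})$ with $x<y<w<v$ and $(x,v)\ne(1,n)$, giving output $e_{xv}$. The right side $e_{xy}*(e_{yw}\cdot e_{wv})=e_{xy}*e_{yv}$ produces the same $e_{xv}$ under the same constraints, so the two sides agree. The identity $(a*b)\cdot c=a\cdot(b*c)$ is handled the same way, matching a product $e_{ik}*e_{kj}=e_{ij}$ followed by one honest multiplication on either the right or the left; in each case the surviving triple forces a single matrix unit as output on both sides. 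I expect these two checks to mirror the bookkeeping in the previous two lemmas, the only care being to respect the exclusion $(i,j)\ne(1,n)$ throughout.

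**Associativity.** For associativity I would compute $(a*b)*c$ and $a*(b*c)$ on basis elements. A nonzero $(e_{xy}*e_{yw})*e_{uv}$ first needs $x<y<w$ and $(x,w)\ne(1,n)$, yielding $e_{xw}$, and then $e_{xw}*e_{uv}\ne0$ forces $w=u$, $w<v$, and $(x,v)\ne(1,n)$; the result is $e_{xv}$ under the chain $x<y<w<v$. The bracketing $e_{xy}*(e_{yw}*e_{wv})$ produces the identical $e_{xv}$ under the same chain, provided the intermediate conditions $(y,v)\ne(1,n)$ and $(x,v)\ne(1,n)$ hold. The subtle point, and the one I would check most carefully, is that the exclusion of $(1,n)$ could in principle make one bracketing vanish while the other survives: I must confirm that whenever $x<y<w<v$ with $(x,v)\ne(1,n)$, every intermediate unit appearing in either association is itself legitimate, so that both sides equal $e_{xv}$ simultaneously.

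**Main obstacle.** The genuine difficulty is not the arithmetic of matrix units but ensuring the single exclusion $(i,j)\ne(1,n)$ does not break any of the three identities asymmetrically. Because $e_{1n}\in\Ann(UT_n(K),\cdot)$ is exactly the value that has been removed from the product's image, I anticipate that the excluded index pair can only ever appear as a \emph{final} output, never as a forced intermediate factor in a longer chain (an intermediate $e_{xw}$ or $e_{yv}$ with $x<y<w<v$ strictly contains $(1,n)$ only when it already equals $(1,n)$, which would force the outer output to vanish on both sides). Confirming this compatibility of the exclusion with all intermediate products is the crux; once it is settled, all three verifications collapse into the routine matrix-unit comparisons sketched above.
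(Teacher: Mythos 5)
Your argument is correct, but it reaches the two matching identities by a genuinely different route than the paper. You verify $(a\cdot b)*c=a*(b\cdot c)$, $(a*b)\cdot c=a\cdot(b*c)$ and the associativity of $*$ directly on matrix units, and your crux observation is the right one: for a chain $x<y<w<v$ the intermediate pairs $(x,w)$ and $(y,v)$ satisfy $w<v\le n$ and $y>x\ge 1$, so neither can equal $(1,n)$; the exclusion is therefore only ever tested at the outermost $*$-output $e_{xv}$, where it applies identically to both sides of each identity. (Your parenthetical formulation --- an intermediate ``strictly contains $(1,n)$ only when it already equals $(1,n)$'' --- is garbled; the clean statement is that an intermediate in a chain simply never equals $(1,n)$, so the fallback clause about both sides vanishing is vacuous. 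This does not damage the proof.) The paper avoids the case analysis for the matching identities altogether: since \cref{matching-ass-identity} is linear in $*$ for fixed $\cdot$, and the product in question is the difference of two structures already known to be $(12)$-matching --- the original product $\cdot$ and the product \cref{e_ik*e_kj=e_ij-fixed-i-and-j} with $(i,j)=(1,n)$, i.e. $e_{1k}*e_{kn}=e_{1n}$ --- only associativity, the sole condition quadratic in $*$, requires a direct check, and that check is exactly the chain computation you perform. The linearity argument is shorter and explains structurally why associativity is the only real work; your direct verification costs more bookkeeping but is self-contained and makes explicit why deleting $(1,n)$ from the image is harmless.
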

	\begin{proof}
		The structure \cref{e_ik*e_kj=e_ij-(i_j)-ne-(1_n)} is the difference between two $(12)$-matching structures on $UT_n(K)$: the original product $\cdot$ on $UT_n(K)$ and the product \cref{e_ik*e_kj=e_ij-fixed-i-and-j} corresponding to $(i,j)=(1,n)$. Thus, \cref{e_ik*e_kj=e_ij-(i_j)-ne-(1_n)} satisfies \cref{matching-ass-identity} with $\sg=(12)$.
		
		For the associativity observe that 
		\begin{align*}
			(e_{xy}*e_{zw})*e_{uv}\ne 0\iff e_{xy}*(e_{zw}*e_{uv})\ne 0\iff y=z,\ w=u\text{ and }(x,v)\ne (1,n),
		\end{align*}
		in which case $(e_{xy}*e_{zw})*e_{uv}=e_{xy}*(e_{zw}*e_{uv})=e_{xv}$.
	\end{proof}

\subsection{The decomposition of a $(12)$-matching structure on $UT_n(K)$}
	
Let $*$ be a (not necessarily associative) bilinear product on $UT_n(K)$ satisfying \cref{matching-ass-identity} with $\sg=(12)$.

\subsubsection{The product $e_{ij}*e_{kl}$ with $j\ne k$}\label{sec-(12)-e_ij*e_kl-j-ne-k}

\begin{lem}\label{e_ij*e_kl=0-for-j-ne-k-and-j-i>1-or-l-k>1}
	 For all $1\le i<j\le n$ and $1\le k<l\le n$ if $j\ne k$ and either $j>i+1$ or $l>k+1$, then $e_{ij}*e_{kl}=0$.
\end{lem}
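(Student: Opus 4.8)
The plan is to exploit the defining $(12)$-matching relation obtained from \cref{matching-ass-identity} with $\sg=(12)$, $\cdot_1=\cdot$ and $\cdot_2=*$, namely
\begin{align*}
	(a\cdot b)*c=a*(b\cdot c)
\end{align*}
for all $a,b,c\in UT_n(K)$. The key observation is that each of the two hypotheses $j>i+1$ and $l>k+1$ lets one of the matrix units $e_{ij}$, $e_{kl}$ be written as a nontrivial $\cdot$-product of two strictly upper triangular matrix units, and that this factored product can then be transported across $*$ by the relation above. Since $j\ne k$, the inner $\cdot$-product that results will vanish, forcing $e_{ij}*e_{kl}=0$.

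Concretely, I would split according to which hypothesis holds. If $j>i+1$, I factor $e_{ij}=e_{i,i+1}\cdot e_{i+1,j}$ and apply the relation with $a=e_{i,i+1}$, $b=e_{i+1,j}$, $c=e_{kl}$:
\begin{align*}
	e_{ij}*e_{kl}=(e_{i,i+1}\cdot e_{i+1,j})*e_{kl}=e_{i,i+1}*(e_{i+1,j}\cdot e_{kl})=\dl_{jk}\bigl(e_{i,i+1}*e_{i+1,l}\bigr)=0,
\end{align*}
the last equality holding because $j\ne k$. If instead $l>k+1$, I factor $e_{kl}=e_{k,l-1}\cdot e_{l-1,l}$ and read the same relation from right to left with $a=e_{ij}$, $b=e_{k,l-1}$, $c=e_{l-1,l}$:
\begin{align*}
	e_{ij}*e_{kl}=e_{ij}*(e_{k,l-1}\cdot e_{l-1,l})=(e_{ij}\cdot e_{k,l-1})*e_{l-1,l}=\dl_{jk}\bigl(e_{i,l-1}*e_{l-1,l}\bigr)=0,
\end{align*}
again by $j\ne k$. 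Since the hypothesis guarantees that at least one of these two cases applies, this proves the lemma.

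I do not expect any real obstacle: the argument is a one-line computation in each case, relying only on a single one of the two $(12)$-matching identities. The only point requiring (minor) care is to choose, in each case, the factorization that places the index mismatch inside the transported $\cdot$-product, so that the Kronecker factor $\dl_{jk}$ materializes and annihilates the term. The genuinely harder products $e_{ij}*e_{kl}$ with $j=i+1$ and $l=k+1$ (those built from the generators, which this factoring trick cannot touch) will presumably be treated separately in the subsequent lemmas.
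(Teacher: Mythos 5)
Your proof is correct and takes essentially the same route as the paper: in each case you factor the ``long'' matrix unit as a $\cdot$-product of two units and move the factorization across $*$ via the $(12)$-matching identity $(a\cdot b)*c=a*(b\cdot c)$ from \cref{matching-ass-identity}, after which $j\ne k$ kills the inner $\cdot$-product. The paper writes out only the case $j>i+1$ (splitting $e_{ij}$ at an arbitrary intermediate index rather than specifically at $i+1$) and declares the case $l>k+1$ analogous, which is precisely the symmetric computation you spell out.
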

\begin{proof}
	Let $j\ne k$ and $j>i+1$. Choose $i<l<j$. Then $e_{ij}*e_{kl}=(e_{il}\cdot e_{lj})*e_{kl}=e_{il}*(e_{lj}\cdot e_{kl})=0$. The case $j\ne k$ and $l>k+1$ is analogous.
\end{proof}

\begin{lem}\label{e_ij*e_kl-in-span-e_1_n-1_e_1n}
	For all $1\le i\le n-1$ and $1\le j\le n-2$ if $i+1\ne j$, then $e_{i,i+1}*e_{j,j+1}\in\spn_K\{e_{1,n-1},e_{1n}\}$.
\end{lem}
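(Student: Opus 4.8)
The plan is to determine the support of $f:=e_{i,i+1}*e_{j,j+1}$, written $f=\sum_{p<q}f(p,q)e_{pq}$, by repeatedly ``pushing'' ordinary multiplications across $*$. The tools are the two identities contained in \cref{matching-ass-identity} for $\sg=(12)$,
\begin{align*}
	(a\cdot b)*c=a*(b\cdot c)\quad\text{and}\quad (a*b)\cdot c=a\cdot(b*c),
\end{align*}
whose effect is that $e_{1p}\cdot f=(e_{1p}*e_{i,i+1})\cdot e_{j,j+1}$ and $f\cdot e_{qn}=e_{i,i+1}\cdot(e_{j,j+1}*e_{qn})$.

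First I would use these two decompositions to confine the support. Since $(e_{1p}*e_{i,i+1})\cdot e_{j,j+1}$ lies in column $j+1$ while $e_{1p}\cdot f=\sum_{m}f(p,m)e_{1m}$, reading off entries gives $f(p,q)=0$ for every $p\ge2$ and $q\ne j+1$. Dually, $e_{i,i+1}\cdot(e_{j,j+1}*e_{qn})$ lies in row $i$ while $f\cdot e_{qn}=\sum_m f(m,q)e_{mn}$, giving $f(p,q)=0$ for every $q\le n-1$ and $p\ne i$. Finally, for the remaining row $p=1$ (relevant only when $i=1$), \cref{e_ij*e_kl=0-for-j-ne-k-and-j-i>1-or-l-k>1} shows $e_{j,j+1}*e_{qn}=0$ whenever $q\le n-2$ and $q\ne j+1$, so $f(1,q)=0$ there as well. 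Putting these together, the only entries that can survive are $(i,j+1)$ and, in row $1$, $(1,n-1)$ and $(1,n)$; thus it remains only to prove $f(i,j+1)=0$ (the position $(1,j+1)$ being harmless precisely when $j+1=n-1$).

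Next I would compute that leftover entry. Setting $g:=e_{j,j+1}*e_{j+1,n}$, the identity $f\cdot e_{j+1,n}=e_{i,i+1}\cdot g$ compares a matrix supported in column $n$ with one supported in row $i$, forcing $f(i,j+1)=g(i+1,n)$; one more push gives $g(i+1,n)=(e_{1,i+1}*e_{j,j+1})(1,j+1)$. This is exactly where the hypothesis $i+1\ne j$ is used. For $i\ge2$ I would factor $e_{1,i+1}=e_{1,i}\cdot e_{i,i+1}$ and apply the first identity,
\begin{align*}
	e_{1,i+1}*e_{j,j+1}=e_{1,i}*(e_{i,i+1}\cdot e_{j,j+1})=e_{1,i}*(\dl_{i+1,j}e_{i,j+1})=0,
\end{align*}
so that $f(i,j+1)=0$.

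The hard part will be the case $i=1$: here $e_{1,i+1}=e_{1,2}$ is a generator that cannot be factored, and the computation above merely returns $f(1,j+1)=g(2,n)$, i.e.\ it loops back to $f$. To break the loop, and assuming $j+1\le n-2$ (otherwise $(1,j+1)$ is the allowed position $(1,n-1)$), I would rewrite $g$ itself using the first identity and the factorization $e_{j+1,n}=e_{j+1,j+2}\cdot e_{j+2,n}$ (valid since $j+2\le n-1$):
\begin{align*}
	g=e_{j,j+1}*e_{j+1,n}=e_{j,j+2}*e_{j+2,n}.
\end{align*}
Feeding this back yields $f(1,j+1)=g(2,n)=(e_{1,2}*e_{j,j+2})(1,j+2)$, and now \cref{e_ij*e_kl=0-for-j-ne-k-and-j-i>1-or-l-k>1} finishes the job: its inner indices differ ($j\ne2$ because $i+1=2\ne j$) and $j+2>j+1$, whence $e_{1,2}*e_{j,j+2}=0$. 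Therefore $f(1,j+1)=0$ as well, and in all cases $f\in\spn_K\{e_{1,n-1},e_{1,n}\}$.
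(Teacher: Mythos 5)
Your proof is correct: every push is a legitimate instance of the two identities $(a\cdot b)*c=a*(b\cdot c)$ and $(a*b)\cdot c=a\cdot(b*c)$, the applications of \cref{e_ij*e_kl=0-for-j-ne-k-and-j-i>1-or-l-k>1} all satisfy its hypotheses (in particular $j\ne 2$ when $i=1$, and $i+1\ne j$ in the factorization step), and the surviving support is accounted for in all boundary cases. The machinery is the same as the paper's — entry-chasing by moving matrix units across $*$, with \cref{e_ij*e_kl=0-for-j-ne-k-and-j-i>1-or-l-k>1} as the only outside input — but your decomposition is genuinely different. The paper spends the hypothesis $i+1\ne j$ at the very start: it deduces $e_{i,i+1}*e_{j,j+2}=0$, concludes $e_{1l}\cdot(e_{i,i+1}*e_{j,j+1})=0$ for all $l$, and so gets row-$1$ support in one stroke, which disposes of your corner $(i,j+1)$ for $i\ge 2$ for free; it then restricts columns by right-multiplying by $e_{k,k+1}$ and, notably, by recursively applying its own partial conclusion to the pair $(e_{j,j+1},e_{k,k+1})$; finally it kills $(1,j+1)$ by two successive pushes with $e_{j+1,j+2}$ and $e_{j+2,j+3}$. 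Your confinement, by contrast, is hypothesis-free and symmetric (left multiplication by $e_{1p}$, right by $e_{qn}$), you avoid the self-referential step by citing \cref{e_ij*e_kl=0-for-j-ne-k-and-j-i>1-or-l-k>1} on $e_{j,j+1}*e_{qn}$ instead, and all use of $i+1\ne j$ is localized in the single coefficient $f(i,j+1)$, killed via the clean identity $f(i,j+1)=(e_{1,i+1}*e_{j,j+1})(1,j+1)$, which the paper never isolates. Two minor remarks: for $i\ge 2$ your explicit factorization $e_{1,i+1}=e_{1i}\cdot e_{i,i+1}$ merely re-derives an instance of \cref{e_ij*e_kl=0-for-j-ne-k-and-j-i>1-or-l-k>1} (there the first gap exceeds one), so you could cite it directly; and your loop-breaking rewrite $e_{j,j+1}*e_{j+1,n}=e_{j,j+2}*e_{j+2,n}$ for $i=1$, $j\le n-3$ plays exactly the role of the paper's second push with $e_{j+2,j+3}$ — both ultimately exploit $j\ne 2$.
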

\begin{proof}
	Since $1\le j\le n-2$, one has $e_{i,i+1}*e_{j,j+2}=0$ by \cref{e_ij*e_kl=0-for-j-ne-k-and-j-i>1-or-l-k>1}. Then $(e_{1l}*e_{i,i+1})\cdot e_{j,j+2}=e_{1l}\cdot (e_{i,i+1}*e_{j,j+2})=0$ for all $1<l\le n$. Hence, $(e_{1l}*e_{i,i+1})(s,j)=0$ for all $s<j$, so $e_{1l}\cdot (e_{i,i+1}*e_{j,j+1})=(e_{1l}*e_{i,i+1})\cdot e_{j,j+1}=0$. It follows that 
	\begin{align}\label{e_i_i+1*e_j_j+1-in-span-e_1k}
		e_{i,i+1}*e_{j,j+1}\in\spn_K\{e_{1k}: 1<k\le n\}.
	\end{align}
Now take $1<k<n-1$ with $k\ne j+1$. Then $e_{j,j+1}*e_{k,k+1}\in\spn_K\{e_{1t}: 1<t\le n\}$ by \cref{e_i_i+1*e_j_j+1-in-span-e_1k}, so $(e_{i,i+1}*e_{j,j+1})\cdot e_{k,k+1}=e_{i,i+1}\cdot(e_{j,j+1}*e_{k,k+1})=0$. This reduces $1<k\le n$ to $k\in\{n-1,n,j+1\}$ in \cref{e_i_i+1*e_j_j+1-in-span-e_1k}. If $j=n-2$, we are done. Otherwise, $j\le n-3$. Write $(e_{i,i+1}*e_{j,j+1})\cdot e_{j+1,j+2}=e_{i,i+1}\cdot(e_{j,j+1}*e_{j+1,j+2})$, whence
	\begin{align}\label{(e_i_i+1*e_j_j+1)(1_j+1)=(e_i_i+1-cdot-(e_j_j+1*e_j+1_j+2))(1_j+2)}
		(e_{i,i+1}*e_{j,j+1})(1,j+1)=(e_{i,i+1}\cdot(e_{j,j+1}*e_{j+1,j+2}))(1,j+2).
	\end{align}
	If $i>1$, then $(e_{i,i+1}*e_{j,j+1})(1,j+1)=0$ by \cref{(e_i_i+1*e_j_j+1)(1_j+1)=(e_i_i+1-cdot-(e_j_j+1*e_j+1_j+2))(1_j+2)}, and we are done. Otherwise, by \cref{(e_i_i+1*e_j_j+1)(1_j+1)=(e_i_i+1-cdot-(e_j_j+1*e_j+1_j+2))(1_j+2)} we have
	\begin{align*}
		(e_{i,i+1}*e_{j,j+1})(1,j+1)&=(e_{j,j+1}*e_{j+1,j+2})(2,j+2)=((e_{j,j+1}*e_{j+1,j+2})\cdot e_{j+2,j+3})(2,j+3)\\
		&=(e_{j,j+1}\cdot (e_{j+1,j+2}*e_{j+2,j+3}))(2,j+3),
	\end{align*}
	which is zero because $j\ne i+1=2$.
\end{proof}

The following corollary is based on the idea that will be used several times below: as soon as we have a statement for $*$, we can apply it to the product $\star$ from \cref{from-*-to-star}. 

\begin{cor}\label{e_ij*e_kl-in-span-e_1n_e_2n}
	For all $2\le i\le n-1$ and $1\le j\le n-1$ if $i+1\ne j$, then $e_{i,i+1}*e_{j,j+1}\in\spn_K\{e_{1n},e_{2n}\}$.
\end{cor}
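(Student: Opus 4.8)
The plan is to deduce this statement from \cref{e_ij*e_kl-in-span-e_1_n-1_e_1n} by the involution trick announced just above, applying the lemma not to $*$ but to the ``reversed'' product $\star$ of \cref{from-*-to-star}. Since $*$ satisfies \cref{matching-ass-identity} with $\sg=(12)$, \cref{from-*-to-star} guarantees that $\star$, defined by \cref{a-star-b=phi(phi-inv(b)*phi-inv(a))} via the involution $\phi$ of \cref{inv-phi}, satisfies the same identity. Hence \cref{e_ij*e_kl-in-span-e_1_n-1_e_1n} applies verbatim to $\star$: for all $1\le i'\le n-1$ and $1\le j'\le n-2$ with $i'+1\ne j'$ we have $e_{i',i'+1}\star e_{j',j'+1}\in\spn_K\{e_{1,n-1},e_{1n}\}$.

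Next I would unwind the definition of $\star$ using $\phi\m=\phi$ together with formula \cref{vf(e_kl)=e_(n-l+1_n-k+1)}, which gives $\phi(e_{i',i'+1})=e_{n-i',n-i'+1}$. Setting $i=n-j'$ and $j=n-i'$, this yields
\begin{align*}
	e_{i',i'+1}\star e_{j',j'+1}=\phi\bigl(\phi(e_{j',j'+1})*\phi(e_{i',i'+1})\bigr)=\phi(e_{i,i+1}*e_{j,j+1}).
\end{align*}
Feeding in the membership obtained for $\star$ and then applying $\phi$ once more (it is an involution), I would obtain
\begin{align*}
	e_{i,i+1}*e_{j,j+1}\in\phi\bigl(\spn_K\{e_{1,n-1},e_{1n}\}\bigr)=\spn_K\{\phi(e_{1,n-1}),\phi(e_{1n})\}=\spn_K\{e_{2n},e_{1n}\},
\end{align*}
which is exactly the asserted span.

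The only point requiring care --- and the step I expect to be the main (if minor) obstacle --- is checking that the substitution $i=n-j'$, $j=n-i'$ converts the index constraints of \cref{e_ij*e_kl-in-span-e_1_n-1_e_1n} into precisely those claimed here. I would verify that $1\le i'\le n-1$ becomes $1\le j\le n-1$, that $1\le j'\le n-2$ becomes $2\le i\le n-1$, and that $i'+1\ne j'$ is equivalent to $i+1\ne j$. Since these ranges match the hypotheses of the corollary exactly, no case is lost or introduced, and the proof is complete.
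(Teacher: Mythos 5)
Your proposal is correct and follows essentially the same route as the paper: apply \cref{e_ij*e_kl-in-span-e_1_n-1_e_1n} to the reversed product $\star$ of \cref{from-*-to-star}, then unwind via the involution $\phi$ of \cref{inv-phi}, with the index substitution $i=n-j'$, $j=n-i'$ converting the hypotheses exactly as you checked. The paper's proof is a condensed version of precisely this argument, including the same verification that $\phi(e_{1,n-1})=e_{2n}$ and $\phi(e_{1n})=e_{1n}$ give the asserted span.
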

\begin{proof}
	If $2\le i\le n-1$, $1\le j\le n-1$ and $i+1\ne j$, then $1\le n-j\le n-1$, $1\le n-i\le n-2$ and $n-j+1\ne n-i$. Hence, applying \cref{e_ij*e_kl-in-span-e_1_n-1_e_1n} to the product $\star$ from \cref{from-*-to-star} we obtain
	\begin{align*}
		e_{n-j,n-j+1}\star e_{n-i,n-i+1}\in\spn_K\{e_{1,n-1},e_{1n}\}.
	\end{align*}
 Then by \cref{a-star-b=phi(phi-inv(b)*phi-inv(a))} we have $\phi(e_{i,i+1}*e_{j,j+1})\in\spn_K\{e_{1,n-1},e_{1n}\}$. Applying $\phi\m=\phi$, we get the desired result.
\end{proof}

\begin{cor}\label{e_ii+1*e_jj+1-in-span-e_1n}
	For all $2\le i\le n-1$ and $1\le j\le n-2$ if $i+1\ne j$, then $e_{i,i+1}*e_{j,j+1}\in\spn_K\{e_{1n}\}$.
\end{cor}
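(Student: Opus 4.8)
The plan is to derive the claim simply by intersecting the two membership statements that are already available under the present hypotheses, so essentially no new computation is needed. First I would check that both preceding results apply to the product in question: when $2\le i\le n-1$ and $1\le j\le n-2$ with $i+1\ne j$, the constraints $1\le i\le n-1$, $1\le j\le n-2$ of \cref{e_ij*e_kl-in-span-e_1_n-1_e_1n} are met, and likewise the constraints $2\le i\le n-1$, $1\le j\le n-1$ of \cref{e_ij*e_kl-in-span-e_1n_e_2n} are met. Hence both corollaries may be invoked for the same element $e_{i,i+1}*e_{j,j+1}$.

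Applying \cref{e_ij*e_kl-in-span-e_1_n-1_e_1n} gives $e_{i,i+1}*e_{j,j+1}\in\spn_K\{e_{1,n-1},e_{1n}\}$, while applying \cref{e_ij*e_kl-in-span-e_1n_e_2n} gives $e_{i,i+1}*e_{j,j+1}\in\spn_K\{e_{1n},e_{2n}\}$. Combining the two, we get
\[
	e_{i,i+1}*e_{j,j+1}\in\spn_K\{e_{1,n-1},e_{1n}\}\cap\spn_K\{e_{1n},e_{2n}\}.
\]
Since $n\ge 3$, the three matrix units $e_{1,n-1}$, $e_{1n}$, $e_{2n}$ are pairwise distinct members of the canonical basis of $UT_n(K)$, so I would finish by computing the intersection directly: writing a common element as $\alpha e_{1,n-1}+\beta e_{1n}=\gamma e_{1n}+\delta e_{2n}$ and comparing coordinates forces $\alpha=\delta=0$, whence the intersection equals $\spn_K\{e_{1n}\}$, giving the assertion.

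The only point that requires care — and the one I would double-check — is that $e_{1,n-1}$, $e_{1n}$, $e_{2n}$ really are distinct basis vectors, for otherwise the intersection could be strictly larger than $\spn_K\{e_{1n}\}$. For $n\ge 3$ this is immediate: $e_{1,n-1}\ne e_{1n}$ because $n-1\ne n$, and $e_{2n}$ differs from both in its first index. Thus no degeneracy occurs, and the argument is entirely formal once \cref{e_ij*e_kl-in-span-e_1_n-1_e_1n,e_ij*e_kl-in-span-e_1n_e_2n} are in hand; there is no genuine obstacle beyond this indexing bookkeeping.
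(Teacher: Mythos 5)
Your proof is correct and takes essentially the same route as the paper: the paper's proof likewise consists of invoking \cref{e_ij*e_kl-in-span-e_1_n-1_e_1n,e_ij*e_kl-in-span-e_1n_e_2n} and computing $\spn_K\{e_{1,n-1},e_{1n}\}\cap\spn_K\{e_{1n},e_{2n}\}=\spn_K\{e_{1n}\}$. Your explicit check that $e_{1,n-1}$, $e_{1n}$, $e_{2n}$ are pairwise distinct basis vectors for $n\ge 3$ (so the intersection does not degenerate) is a detail the paper leaves implicit, and it is verified correctly.
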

\begin{proof}
	By \cref{e_ij*e_kl-in-span-e_1_n-1_e_1n,e_ij*e_kl-in-span-e_1n_e_2n} we have
	\begin{align*}
		e_{i,i+1}*e_{j,j+1}&\in\spn_K\{e_{1,n-1},e_{1n}\}\cap\spn_K\{e_{1n},e_{2n}\}=\spn_K\{e_{1n}\}.\qedhere
	\end{align*} 
\end{proof}

\begin{lem}\label{e_12*e_n-1_n-in-span-e_1_n-1_e_1n_e_2n}
	We have $e_{12}*e_{n-1,n}\in\spn_K\{e_{1,n-1},e_{1n},e_{2n}\}$.
\end{lem}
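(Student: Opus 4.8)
The plan is to pin down the support of $P:=e_{12}*e_{n-1,n}$ by probing it with the original product $\cdot$ on both sides, exploiting the two $(12)$-matching identities, i.e.\ \cref{matching-ass-identity} with $\sg=(12)$, which read $(a\cdot b)*c=a*(b\cdot c)$ and $(a*b)\cdot c=a\cdot(b*c)$. Note first that for $n=3$ the claim is vacuous, since then $\spn_K\{e_{1,n-1},e_{1n},e_{2n}\}=\spn_K\{e_{12},e_{13},e_{23}\}=UT_3(K)$; the content therefore lies in $n\ge 4$, where $e_{12}$ and $e_{n-1,n}$ are non-composable.

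First I would bound the columns of $P$. For each $1\le l\le n-1$, the second $(12)$-matching identity gives $P\cdot e_{l,l+1}=(e_{12}*e_{n-1,n})\cdot e_{l,l+1}=e_{12}\cdot(e_{n-1,n}*e_{l,l+1})$. Since $(n-1)+1=n\ne l$, \cref{e_ij*e_kl-in-span-e_1n_e_2n} applies and yields $e_{n-1,n}*e_{l,l+1}\in\spn_K\{e_{1n},e_{2n}\}$; multiplying this on the left by $e_{12}$ (which kills $e_{1n}$ and sends $e_{2n}$ to $e_{1n}$) shows $P\cdot e_{l,l+1}\in\spn_K\{e_{1n}\}$. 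On the other hand $P\cdot e_{l,l+1}=\sum_s P(s,l)e_{s,l+1}$ is supported in column $l+1$. Comparing the two expressions: for $l\le n-2$ the column $l+1$ differs from $n$, forcing the whole $l$-th column of $P$ to vanish; for $l=n-1$ one gets that the $(n-1)$-st column of $P$ is concentrated in row $1$. Hence $P\in\spn_K\{e_{1,n-1}\}+\spn_K\{e_{s,n}:1\le s\le n-1\}$.

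It then remains to kill $P(s,n)$ for $3\le s\le n-1$. For such $s$ I would compute, via the second $(12)$-matching identity again, $e_{1,s}\cdot P=e_{1,s}\cdot(e_{12}*e_{n-1,n})=(e_{1,s}*e_{12})\cdot e_{n-1,n}$; since $s\ge 3$ means $s>1+1$, \cref{e_ij*e_kl=0-for-j-ne-k-and-j-i>1-or-l-k>1} gives $e_{1,s}*e_{12}=0$, so the right-hand side vanishes. Using the column information already obtained, $e_{1,s}\cdot P=P(s,n-1)e_{1,n-1}+P(s,n)e_{1,n}=P(s,n)e_{1,n}$ (as $P(s,n-1)=0$ for $s\ge 2$), whence $P(s,n)=0$. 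This leaves exactly the three admissible positions $(1,n-1),(1,n),(2,n)$. Alternatively, once the column bound $P\in\spn_K\{e_{1,n-1}\}+\spn_K\{e_{s,n}:s\le n-1\}$ is established, this last reduction comes for free from the involution: applying the same column argument to the product $\star$ of \cref{from-*-to-star} and using $e_{12}\star e_{n-1,n}=\phi(P)$ together with $\phi(e_{kl})=e_{n-l+1,n-k+1}$ cuts the support of $P$ down to $\{(1,n-1),(1,n),(2,n)\}$ upon intersecting the two coordinate subspaces.

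I expect the only delicate point to be the bookkeeping. The indices $i=1$ and $j=n-1$ are precisely the boundary values excluded from \cref{e_ii+1*e_jj+1-in-span-e_1n}, so one cannot simply quote a single corollary for $P$ itself; care is needed to verify that \cref{e_ij*e_kl-in-span-e_1n_e_2n,e_ij*e_kl=0-for-j-ne-k-and-j-i>1-or-l-k>1} really do apply to the auxiliary products $e_{n-1,n}*e_{l,l+1}$ and $e_{1,s}*e_{12}$ in the relevant index ranges, and to correctly match columns when equating the two forms of $P\cdot e_{l,l+1}$.
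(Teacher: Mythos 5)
Your proof is correct, and it is a genuine mirror-image variant of the paper's argument rather than a reproduction of it. The paper probes $P:=e_{12}*e_{n-1,n}$ from the \emph{left}: for $2<k<n$ it uses $e_{k-1,k}*e_{12}\in\spn_K\{e_{1n},e_{2n}\}$ (the same \cref{e_ij*e_kl-in-span-e_1n_e_2n} you invoke) to get $e_{k-1,k}\cdot P=(e_{k-1,k}*e_{12})\cdot e_{n-1,n}=0$, confining $P$ to rows $1,2$; it then obtains the column restriction $t\in\{n-1,n\}$ by the involution argument of \cref{from-*-to-star} (your ``alternative'' route), and, being left with the four positions $(1,n-1),(1,n),(2,n-1),(2,n)$, it needs one further ad hoc computation, $e_{12}\cdot P\cdot e_{n-1,n}=(e_{12}*e_{12})\cdot e_{n-1,n}\cdot e_{n-1,n}=0$ (exploiting $e_{n-1,n}\cdot e_{n-1,n}=0$), to kill the $(2,n-1)$ entry. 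You probe from the \emph{right} instead and extract strictly more from the first pass: since $P\cdot e_{l,l+1}=e_{12}\cdot(e_{n-1,n}*e_{l,l+1})$ lands in $\spn_K\{e_{1n}\}$ (not merely ``is small''), you annihilate columns $1,\dots,n-2$ outright and pin column $n-1$ to row $1$, so $(2,n-1)$ is excluded for free and the paper's final trick becomes unnecessary. Your finish, $e_{1s}\cdot P=(e_{1s}*e_{12})\cdot e_{n-1,n}=0$ for $3\le s\le n-1$ via \cref{e_ij*e_kl=0-for-j-ne-k-and-j-i>1-or-l-k>1}, replaces the involution altogether in your main route, while your alternative (the column bound intersected with its $\phi$-image) is exactly the mirrored version of the paper's row-then-involution scheme and computes the right intersection $\{(1,n-1),(1,n),(2,n)\}$. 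The bookkeeping you flag as delicate does go through: \cref{e_ij*e_kl-in-span-e_1n_e_2n} applies to $e_{n-1,n}*e_{l,l+1}$ for every $1\le l\le n-1$ because the hypothesis $i+1\ne j$ there reads $n\ne l$; both cited results precede the statement in the paper, so there is no circularity; and neither step uses associativity of $*$, consistent with the standing hypothesis of that subsection. What each approach buys: the paper's version is slightly shorter per step but pays with an extra closing computation; yours front-loads the work into a sharper column bound and, in the main route, dispenses with the involution entirely.
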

\begin{proof}
	The statement is trivial for $n=3$, so let $n>3$. Take $2<k<n$. Then $e_{k-1,k}*e_{12}\in\spn_K\{e_{1n},e_{2n}\}$ by \cref{e_ij*e_kl-in-span-e_1n_e_2n}. Therefore, $(e_{k-1,k}*e_{12})\cdot e_{n-1,n}=0$, whence $e_{k-1,k}\cdot (e_{12}*e_{n-1,n})=0$. Consequently, 
	\begin{align}\label{e_12*e_n-1_n-in-span-e_1l-oplus-span-e_2l}
		e_{12}*e_{n-1,n}\in\spn_K\{e_{st}: 1\le s<t\le n,\ s\in\{1,2\}\}.
	\end{align}
Applying \cref{e_12*e_n-1_n-in-span-e_1l-oplus-span-e_2l} to the product $\star$ from \cref{from-*-to-star}, we get
\begin{align}\label{e_12*e_n-1_n-in-span-e_k_n-1-oplus-span-e_kn}
	e_{12}*e_{n-1,n}\in\spn_K\{e_{st}: 1\le s<t\le n,\ t\in\{n-1,n\}\}.
\end{align}
It follows from \cref{e_12*e_n-1_n-in-span-e_1l-oplus-span-e_2l,e_12*e_n-1_n-in-span-e_k_n-1-oplus-span-e_kn} that $e_{12}*e_{n-1,n}\in\spn_K\{e_{1,n-1},e_{1n},e_{2,n-1},e_{2n}\}$. However, since $e_{n-1,n}\cdot e_{n-1,n}=0$, we have 
\begin{align*}
	e_{12}\cdot (e_{12}*e_{n-1,n})\cdot e_{n-1,n}=(e_{12}*e_{12})\cdot e_{n-1,n}\cdot e_{n-1,n}=0,
\end{align*}
whence $(e_{12}*e_{n-1,n})(2,n-1)=0$.
\end{proof}

\begin{lem}\label{e_12*e_ii+1(1_n-1)=e_ii+1*e_n-1n(2_n)}
	For all $1\le i\le n-1$ we have 
	$
		(e_{12}*e_{i,i+1})(1,n-1)=(e_{i,i+1}*e_{n-1,n})(2,n).
	$
\end{lem}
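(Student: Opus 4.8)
The plan is to realize both sides of the claimed equality as the $(1,n)$-entry of one and the same matrix, using the second half of the $(12)$-matching relation as a bridge. Taking $\cdot_1=\cdot$ and $\cdot_2=*$ in \cref{matching-ass-identity} with $\sg=(12)$, the second identity reads
\begin{align*}
	(a*b)\cdot c=a\cdot(b*c)
\end{align*}
for all $a,b,c\in UT_n(K)$; this is the only property of $*$ I would need.

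First I would isolate the relevant entries by multiplying by matrix units and invoking the formulas $a\cdot e_{kl}=\sum_{i=1}^{k}a(i,k)e_{il}$ and $e_{kl}\cdot a=\sum_{j=l}^{n}a(l,j)e_{kj}$. Writing $X=e_{12}*e_{i,i+1}$, right multiplication by $e_{n-1,n}$ gives $X\cdot e_{n-1,n}=\sum_{s=1}^{n-1}X(s,n-1)e_{s,n}$, so that $(X\cdot e_{n-1,n})(1,n)=X(1,n-1)$. Dually, writing $Y=e_{i,i+1}*e_{n-1,n}$, left multiplication by $e_{12}$ gives $e_{12}\cdot Y=\sum_{t=2}^{n}Y(2,t)e_{1t}$, so that $(e_{12}\cdot Y)(1,n)=Y(2,n)$.

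It then remains to apply the bridge identity with $a=e_{12}$, $b=e_{i,i+1}$, $c=e_{n-1,n}$, which yields $(e_{12}*e_{i,i+1})\cdot e_{n-1,n}=e_{12}\cdot(e_{i,i+1}*e_{n-1,n})$, i.e. $X\cdot e_{n-1,n}=e_{12}\cdot Y$. Reading off the $(1,n)$-entry of both sides and using the two computations above gives $X(1,n-1)=Y(2,n)$, which is exactly the assertion.

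The argument is essentially bookkeeping, so I do not anticipate a genuine obstacle; the only point requiring care is orienting everything correctly — choosing to probe $X$ on the right by $e_{n-1,n}$ and $Y$ on the left by $e_{12}$ precisely so that the single product $X\cdot e_{n-1,n}=e_{12}\cdot Y$ supplied by the matching relation carries the $(1,n-1)$-entry of one factor to the $(2,n)$-entry of the other. Note that $n\ge 3$ (already assumed) guarantees the indices $1<2$ and $n-1<n$ are genuine, so no degeneracy occurs.
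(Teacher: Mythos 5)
Your proof is correct and is exactly the paper's argument: the paper's one-line proof reads ``Just evaluate $(e_{12}*e_{i,i+1})\cdot e_{n-1,n}=e_{12}\cdot (e_{i,i+1}*e_{n-1,n})$ at $(1,n)$,'' and your entry computations simply spell out that evaluation.
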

\begin{proof}
	Just evaluate $(e_{12}*e_{i,i+1})\cdot e_{n-1,n}=e_{12}\cdot (e_{i,i+1}*e_{n-1,n})$ at $(1,n)$.
\end{proof}

\subsubsection{The product $e_{ik}*e_{kj}$}\label{sec-(12)-e_ik*e_kj}

\begin{lem}\label{e_ik*e_kj-does-not-depend-on-k}
	Let $1\le i<j\le n$ with $j>i+1$. Then for all $i<k<j$ we have $e_{ik}*e_{kj}=e_{i,i+1}*e_{i+1,j}$.
	Thus, $e_{ik}*e_{kj}$ does not depend on $k$.
\end{lem}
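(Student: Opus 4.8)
The plan is to read the result directly off the half of the $(12)$-matching identity \cref{matching-ass-identity} (with $\sg=(12)$) that moves a $\cdot$-factor across a $*$, namely $(a\cdot b)*c=a*(b\cdot c)$; this is exactly the identity already exploited in the proof of \cref{e_ij*e_kl=0-for-j-ne-k-and-j-i>1-or-l-k>1}. The key observation is that whenever $i+1<k$ the matrix unit $e_{ik}$ is \emph{not} a generator but factors through $e_{i,i+1}$ as $e_{ik}=e_{i,i+1}\cdot e_{i+1,k}$, so the left argument of $e_{ik}*e_{kj}$ can be broken apart and the $*$ pushed past the $\cdot$.

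Concretely, fix $i<k<j$. If $k=i+1$ there is nothing to prove. If $i+1<k$, then applying the identity above with $a=e_{i,i+1}$, $b=e_{i+1,k}$, $c=e_{kj}$ gives
\begin{align*}
	e_{ik}*e_{kj}=(e_{i,i+1}\cdot e_{i+1,k})*e_{kj}=e_{i,i+1}*(e_{i+1,k}\cdot e_{kj})=e_{i,i+1}*e_{i+1,j},
\end{align*}
where the final equality uses $e_{i+1,k}\cdot e_{kj}=e_{i+1,j}$ (valid since $i+1<j$). The right-hand side no longer involves $k$, which simultaneously yields the stated equality and the independence of $e_{ik}*e_{kj}$ on the choice of $k$.

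I do not anticipate any real obstacle: the entire content is the single factorization $e_{ik}=e_{i,i+1}\cdot e_{i+1,k}$ combined with one application of the matching identity. The only point requiring a word of care is isolating the degenerate case $k=i+1$, where the factorization is unavailable but the assertion is a tautology.
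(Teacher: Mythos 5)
Your proof is correct and is essentially identical to the paper's own argument: both dispose of the trivial case $k=i+1$ and otherwise factor $e_{ik}=e_{i,i+1}\cdot e_{i+1,k}$, then apply the $(12)$-matching identity $(a\cdot b)*c=a*(b\cdot c)$ to obtain $e_{i,i+1}*e_{i+1,j}$. Nothing is missing.
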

\begin{proof}
	It suffices to consider the case $k>i+1$. Then 
	\begin{align*}
		e_{ik}*e_{kj}&=(e_{i,i+1}\cdot e_{i+1,k})*e_{kj}=e_{i,i+1}*(e_{i+1,k}\cdot e_{kj})=e_{i,i+1}*e_{i+1,j}.\qedhere
	\end{align*}
\end{proof}

\begin{lem}\label{e_ik*e_kj-for-1-le-i<k<j<n}
	For all $1\le i<k<j<n$ we have $e_{ik}*e_{kj}\in\spn_K\{e_{ij},e_{1,n-1},e_{1n}\}$.
\end{lem}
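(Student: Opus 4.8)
The plan is to reduce everything to the single product $P=e_{i,i+1}*e_{i+1,j}$, which equals $e_{ik}*e_{kj}$ by \cref{e_ik*e_kj-does-not-depend-on-k} (recall that here $j\ge i+2$ and $j\le n-1$), and then to pin down its support $\{(s,t):P(s,t)\ne 0\}$ by probing $P$ on the left with the units $e_{1s}$ and on the right with the units $e_{tn}$. The whole argument rests on the second relation in \cref{matching-ass-identity} with $\sg=(12)$, namely $(a*b)\cdot c=a\cdot(b*c)$, which lets me slide the associative product $\cdot$ across $*$.

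The key observation I would exploit is that $P\cdot e_{tn}=(e_{i,i+1}*e_{i+1,j})\cdot e_{tn}=e_{i,i+1}\cdot(e_{i+1,j}*e_{tn})$ for every $t<n$; since a left multiple $e_{i,i+1}\cdot(-)$ lives in row $i$ while $P\cdot e_{tn}=\sum_s P(s,t)e_{sn}$, this forces $P(s,t)=0$ for all $s\ne i$ and all $t<n$. Thus $P$ is supported in row $i$ together with column $n$. Dually I would bound the rows: for $1<s\le n$ with $s\ne i$, rewrite $e_{1s}\cdot P=(e_{1s}*e_{i,i+1})\cdot e_{i+1,j}$; when $s>2$ the factor $e_{1s}*e_{i,i+1}$ vanishes by \cref{e_ij*e_kl=0-for-j-ne-k-and-j-i>1-or-l-k>1}, and when $s=2$ (so $i\ne 2$) it lies in $\spn_K\{e_{1,n-1},e_{1n}\}$ by \cref{e_ij*e_kl-in-span-e_1_n-1_e_1n}, which is annihilated on the right by $e_{i+1,j}$ because $i+1\le n-2$. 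Hence every row other than $1$ and $i$ is zero, and combining the two bounds leaves $P$ supported on row $i$ together with the single entry $(1,n)$.

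It then remains to cut down row $i$, and I expect the case split $i\ge 2$ versus $i=1$ to be the only delicate point (it is forced because $e_{1i}\in UT_n(K)$ only when $i\ge 2$). For $i\ge 2$ I would use that $e_{1i}\cdot P=(e_{1i}*e_{i,i+1})\cdot e_{i+1,j}$ is simultaneously supported in row $1$ (being a left multiple by $e_{1i}$) and in column $j$ (being a right multiple by $e_{i+1,j}$), so $e_{1i}\cdot P$ sits at the single position $(1,j)$; this gives $P(i,t)=0$ for $t\ne j$ and hence $P\in\spn_K\{e_{ij},e_{1n}\}$. For $i=1$ the unit $e_{1i}$ is unavailable, but row $i$ is now row $1$, and I would instead trim its columns directly: for $t\le n-2$ with $t\ne j$ the product $e_{2,j}*e_{tn}$ vanishes by \cref{e_ij*e_kl=0-for-j-ne-k-and-j-i>1-or-l-k>1}, so $P\cdot e_{tn}=e_{12}\cdot(e_{2,j}*e_{tn})=0$ and the only surviving columns in row $1$ are $j$, $n-1$, $n$, giving $P\in\spn_K\{e_{1j},e_{1,n-1},e_{1n}\}$. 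In either case $P\in\spn_K\{e_{ij},e_{1,n-1},e_{1n}\}$, as required.
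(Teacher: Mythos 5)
Your proof is correct, and although it runs on the same fuel as the paper's --- the reduction to $k=i+1$ via \cref{e_ik*e_kj-does-not-depend-on-k}, probing $P=e_{i,i+1}*e_{i+1,j}$ with matrix units through the identity $(a*b)\cdot c=a\cdot(b*c)$, and the prior bounds \cref{e_ij*e_kl=0-for-j-ne-k-and-j-i>1-or-l-k>1,e_ij*e_kl-in-span-e_1_n-1_e_1n} --- the execution is genuinely different and leaner. The paper right-probes with the adjacent units $e_{t,t+1}$, which obliges it to show that $e_{i+1,j}*e_{t,t+1}$ is annihilated by $e_{i,i+1}$ (splitting into the cases $j>i+2$ and $j=i+2$), and then it must bootstrap: the intermediate inclusion \cref{e_ik*e_kj-in-span-e_it-t-in-j-n-1-n} is fed back into itself, applied to $e_{1i}*e_{i,i+1}$ and to $e_{i+1,j}*e_{j,n-1}$, in order to strip the entries $(i,n-1)$, $(i,n)$ and $(1,j)$. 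You avoid both devices: your right-probe $P\cdot e_{tn}=e_{i,i+1}\cdot(e_{i+1,j}*e_{tn})$ uses nothing but the support shape of left multiplication by $e_{i,i+1}$ (row $i$), with no appeal to any lemma, and yields the cross-shaped bound (row $i$ together with column $n$) in one stroke; likewise your endgame for $i\ge 2$ exploits only that $e_{1i}\cdot P=(e_{1i}*e_{i,i+1})\cdot e_{i+1,j}$ lies simultaneously in row $1$ and in column $j$, with no self-application of a partial result. All the index checks go through: $i\le n-3$ guarantees $i+1\le n-2$, so $e_{1,n-1}$ and $e_{1n}$ are indeed killed by $e_{i+1,j}$ on the right, and for $i=1$ the assumption $t\le n-2$ supplies the gap $n-t>1$ required in \cref{e_ij*e_kl=0-for-j-ne-k-and-j-i>1-or-l-k>1}. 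A pleasant byproduct is that for $i\ge 2$ you actually prove the sharper containment $e_{ik}*e_{kj}\in\spn_K\{e_{ij},e_{1n}\}$, i.e. an $e_{1,n-1}$-component can occur only when $i=1$ --- which is consistent with the decomposition in \cref{(12)-matching-UT_n-full-descr}, where among the products $e_{ik}*e_{kj}$ with $j<n$ only $e_{12}*e_{23}$ carries an $e_{1,n-1}$-term.
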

\begin{proof}
	In view of \cref{e_ik*e_kj-does-not-depend-on-k} it suffices to take $k=i+1$. Let $1<l\le n$ with $l\ne i$. Then $e_{l-1,l}*e_{i,i+1}\in\spn_K\{e_{1,n-1},e_{1n}\}$ by \cref{e_ij*e_kl-in-span-e_1_n-1_e_1n}. It follows that $(e_{l-1,l}*e_{i,i+1})\cdot e_{i+1,j}=0$, whence $e_{l-1,l}\cdot(e_{i,i+1}*e_{i+1,j})=0$, i.e.
	\begin{align}\label{e_ik*e_kj-in-span-e_it-oplus-span-e_1t}
		e_{i,i+1}*e_{i+1,j}\in\spn_K\{e_{st}: 1\le s<t\le n,\ s\in\{1,i\}\}.
	\end{align}
	Now, for $1\le t<n-1$ with $t\ne j$ either $e_{i+1,j}*e_{t,t+1}=0$ (if $j>i+2$) by \cref{e_ij*e_kl=0-for-j-ne-k-and-j-i>1-or-l-k>1} or $e_{i+1,j}*e_{t,t+1}\in\spn_K\{e_{1,n-1},e_{1n}\}$ (if $j=i+2$) by \cref{e_ij*e_kl-in-span-e_1_n-1_e_1n}. In any case, $e_{i,i+1}\cdot (e_{i+1,j}*e_{t,t+1})=0$, whence $(e_{i,i+1}*e_{i+1,j})\cdot e_{t,t+1}=0$. Thus, $t\in\{j,n-1,n\}$ in \cref{e_ik*e_kj-in-span-e_it-oplus-span-e_1t}, i.e.
	\begin{align}\label{e_ik*e_kj-in-span-e_it-t-in-j-n-1-n}
		e_{i,i+1}*e_{i+1,j}\in\spn_K\{e_{st}:s\in\{1,i\}, t\in\{j,n-1,n\}\}.
	\end{align}
	If $i=1$, then we are done. Otherwise, 
	\begin{align*}
		e_{1i}*e_{i,i+1}\in\spn_K\{e_{1t}: t\in\{i+1,n-1,n\}\}
	\end{align*}
	by \cref{e_ik*e_kj-in-span-e_it-t-in-j-n-1-n}. Then
	\begin{align*}
		e_{1i}\cdot(e_{i,i+1}*e_{i+1,j})=(e_{1i}*e_{i,i+1})\cdot e_{i+1,j}\in\spn_K\{e_{1j}\},
	\end{align*}
	showing that one can remove $(s,t)=(i,n-1)$ and $(s,t)=(i,n)$ from \cref{e_ik*e_kj-in-span-e_it-t-in-j-n-1-n}. If $j=n-1$, then we are done. Otherwise,
	\begin{align*}
		e_{i+1,j}*e_{j,n-1}\in\spn_K\{e_{st}:s\in\{1,i+1\}, t\in\{n-1,n\}\}
	\end{align*}
	by \cref{e_ik*e_kj-in-span-e_it-t-in-j-n-1-n}. Then
	\begin{align*}
		(e_{i,i+1}*e_{i+1,j})\cdot e_{j,n-1}=e_{i,i+1}\cdot(e_{i+1,j}*e_{j,n-1})\in\spn_K\{e_{i,n-1},e_{in}\},
	\end{align*}
	showing that one can also remove $(s,t)=(1,j)$ from \cref{e_ik*e_kj-in-span-e_it-t-in-j-n-1-n}. 
\end{proof}

\begin{cor}\label{e_ik*e_kj-for-1<i<k<j-le-n}
	For all $1<i<k<j\le n$ we have $e_{ik}*e_{kj}\in\spn_K\{e_{ij},e_{1n},e_{2n}\}$.
\end{cor}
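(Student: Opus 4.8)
The plan is to derive this corollary from \cref{e_ik*e_kj-for-1-le-i<k<j<n} exactly as \cref{e_ij*e_kl-in-span-e_1n_e_2n} was derived from \cref{e_ij*e_kl-in-span-e_1_n-1_e_1n}: apply the lemma to the product $\star$ from \cref{from-*-to-star}, which is again $(12)$-matching by \cref{from-*-to-star}, and then transport the conclusion back through the involution $\phi$. First I would set up the index substitution. Given $1<i<k<j\le n$, put $i'=n-j+1$, $k'=n-k+1$ and $j'=n-i+1$. The chain $1<i<k<j\le n$ then translates precisely into $1\le i'<k'<j'<n$: the ordering $i'<k'<j'$ is just $j>k>i$, the bound $i'\ge 1$ comes from $j\le n$, and the strict bound $j'=n-i+1<n$ is exactly the hypothesis $i>1$. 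Thus \cref{e_ik*e_kj-for-1-le-i<k<j<n} applies to $\star$ at the triple $(i',k',j')$.

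Next I would compute how $\star$ at $(i',k',j')$ relates to $*$ at $(i,k,j)$. Using $\phi(e_{ab})=e_{n-b+1,n-a+1}$ one checks $\phi(e_{k'j'})=e_{ik}$ and $\phi(e_{i'k'})=e_{kj}$; then \cref{a-star-b=phi(phi-inv(b)*phi-inv(a))} together with $\phi\m=\phi$ gives
\begin{align*}
	e_{i'k'}\star e_{k'j'}=\phi(\phi(e_{k'j'})*\phi(e_{i'k'}))=\phi(e_{ik}*e_{kj}).
\end{align*}
Applying \cref{e_ik*e_kj-for-1-le-i<k<j<n} to $\star$ therefore yields $\phi(e_{ik}*e_{kj})\in\spn_K\{e_{i'j'},e_{1,n-1},e_{1n}\}$.

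Finally I would apply $\phi$ once more and evaluate the three generators: $\phi(e_{i'j'})=e_{ij}$, $\phi(e_{1,n-1})=e_{2n}$ and $\phi(e_{1n})=e_{1n}$. This gives $e_{ik}*e_{kj}\in\spn_K\{e_{ij},e_{1n},e_{2n}\}$, as claimed. There is no genuine obstacle here; the proof is purely formal, and the only point demanding care is the bookkeeping of the substitution, in particular recognizing that the strict inequality $j'<n$ required by \cref{e_ik*e_kj-for-1-le-i<k<j<n} corresponds exactly to the hypothesis $i>1$, which is precisely why the span in this corollary contains $e_{2n}$ in place of $e_{1,n-1}$.
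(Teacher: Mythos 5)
Your proof is correct and takes exactly the paper's approach: the paper's entire proof of this corollary is the one-line remark that it ``follows by applying \cref{e_ik*e_kj-for-1-le-i<k<j<n} to the product $\star$ from \cref{from-*-to-star}'', and your write-up supplies precisely the intended details. Your index bookkeeping checks out, including the key observation that the hypothesis $i>1$ translates into the strict bound $j'=n-i+1<n$ required by \cref{e_ik*e_kj-for-1-le-i<k<j<n}, and that $\phi(e_{1,n-1})=e_{2n}$ explains the shape of the span.
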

\begin{proof}
	This follows by applying \cref{e_ik*e_kj-for-1-le-i<k<j<n} to the product $\star$ from \cref{from-*-to-star}.
\end{proof}

\begin{cor}
	For all $1<i<k<j<n$ we have $e_{ik}*e_{kj}\in\spn_K\{e_{ij},e_{1n}\}$.
\end{cor}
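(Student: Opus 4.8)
The plan is to derive this directly by intersecting the two spans supplied by the two immediately preceding results. First I would observe that the hypotheses $1<i<k<j<n$ simultaneously fit the assumptions of \cref{e_ik*e_kj-for-1-le-i<k<j<n} (since in particular $1\le i<k<j<n$) and of \cref{e_ik*e_kj-for-1<i<k<j-le-n} (since in particular $1<i<k<j\le n$). Applying both results to the single product $e_{ik}*e_{kj}$ places it in
\begin{align*}
	\spn_K\{e_{ij},e_{1,n-1},e_{1n}\}\cap\spn_K\{e_{ij},e_{1n},e_{2n}\}.
\end{align*}

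The remaining task is to evaluate this intersection, and the only point demanding any care is checking that the matrix units involved are pairwise distinct. Under the constraint $1<i<j<n$ one has $i\ge 2$ and $j\le n-1$, so $e_{ij}$ differs from $e_{1,n-1}$ and $e_{1n}$ (both of which have row index $1$) and from $e_{2n}$ (whose column index is $n$). Since all of these are members of the canonical matrix-unit basis of $UT_n(K)$, the two spans meet exactly in the span of their common basis vectors $e_{ij}$ and $e_{1n}$, yielding
\begin{align*}
	e_{ik}*e_{kj}\in\spn_K\{e_{ij},e_{1n}\},
\end{align*}
as claimed.

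This mirrors precisely the intersection argument already carried out in the proof of \cref{e_ii+1*e_jj+1-in-span-e_1n}, so I do not expect any genuine obstacle here: no new structural input about $*$ is needed beyond the two cited membership statements. The closest thing to a subtlety is the distinctness bookkeeping for the four matrix units, and even that is routine once the inequalities $i\ge 2$ and $j\le n-1$ are recorded.
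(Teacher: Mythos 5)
Your proposal is correct and is essentially identical to the paper's own proof, which likewise obtains the claim by intersecting the spans from \cref{e_ik*e_kj-for-1-le-i<k<j<n,e_ik*e_kj-for-1<i<k<j-le-n} and reads off $\spn_K\{e_{ij},e_{1,n-1},e_{1n}\}\cap\spn_K\{e_{ij},e_{1n},e_{2n}\}=\spn_K\{e_{ij},e_{1n}\}$. Your explicit distinctness bookkeeping (using $i\ge 2$ and $j\le n-1$ to verify the matrix units are pairwise distinct basis elements, so the intersection of the spans is the span of the common basis vectors) is left implicit in the paper but is exactly the justification needed.
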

\begin{proof}
	By \cref{e_ik*e_kj-for-1-le-i<k<j<n,e_ik*e_kj-for-1<i<k<j-le-n} we have
	\begin{align*}
		e_{ik}*e_{kj}&\in\spn_K\{e_{ij},e_{1,n-1},e_{1n}\}\cap\spn_K\{e_{ij},e_{1n},e_{2n}\}=\spn_K\{e_{ij},e_{1n}\}.\qedhere
	\end{align*}
\end{proof}

\begin{lem}\label{e_1k*e_kj-in-span-e_1j-e_1n}
	For all $1<k<j\le n-1$ we have $e_{1k}*e_{kj}\in\spn_K\{e_{1j},e_{1n}\}$, unless $(k,j)=(2,3)$.
\end{lem}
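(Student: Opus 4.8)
The plan is to refine the three-term inclusion already in hand by eliminating the $e_{1,n-1}$-component. Since $1\le 1<k<j<n$, \cref{e_ik*e_kj-for-1-le-i<k<j<n} yields $e_{1k}*e_{kj}\in\spn_K\{e_{1j},e_{1,n-1},e_{1n}\}$, and \cref{e_ik*e_kj-does-not-depend-on-k} (with $i=1$) identifies this element with $e_{12}*e_{2j}$, so it depends on $j$ alone; I would fix $P:=e_{12}*e_{2j}$ and argue about its coefficients. If $j=n-1$ then $e_{1j}=e_{1,n-1}$, so the ambient span already equals $\spn_K\{e_{1j},e_{1n}\}$ and there is nothing to do; hence I may assume $3\le j\le n-2$, where $e_{1j}$, $e_{1,n-1}$, $e_{1n}$ are pairwise distinct and the coefficient of $e_{1,n-1}$ in $P$ is exactly $P(1,n-1)$.

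To surface that coefficient I would right-multiply by $e_{n-1,n}$, since $(P\cdot e_{n-1,n})(1,n)=P(1,n-1)$. By the $(12)$-matching identity $(a*b)\cdot c=a\cdot(b*c)$ from \cref{matching-ass-identity}, one has $P\cdot e_{n-1,n}=(e_{12}*e_{2j})\cdot e_{n-1,n}=e_{12}\cdot(e_{2j}*e_{n-1,n})$, which reduces everything to the single product $e_{2j}*e_{n-1,n}$. Applying \cref{e_ij*e_kl=0-for-j-ne-k-and-j-i>1-or-l-k>1} with indices $(2,j,n-1,n)$: the hypothesis $j\ne n-1$ holds because $j\le n-2$, and the alternative hypothesis $j>i+1=3$ holds precisely when $j\ge 4$. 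In that range $e_{2j}*e_{n-1,n}=0$, whence $P\cdot e_{n-1,n}=0$, so $P(1,n-1)=0$ and $e_{1k}*e_{kj}=P\in\spn_K\{e_{1j},e_{1n}\}$.

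The only value of $j$ with $3\le j\le n-2$ left uncovered is $j=3$, where the hypothesis $j>3$ of \cref{e_ij*e_kl=0-for-j-ne-k-and-j-i>1-or-l-k>1} fails and $e_{23}*e_{n-1,n}$ need not vanish; since $1<k<3$ forces $k=2$, this is exactly the excluded pair $(k,j)=(2,3)$. I expect no serious obstacle here: the argument is a one-step reduction via the matching identity together with the earlier vanishing lemma, and the only delicate point is the index bookkeeping — choosing $e_{n-1,n}$ as the right multiplier so that the $e_{1,n-1}$-coefficient is extracted, and recognizing that $j=3$ is a genuine exception (when $n\ge 5$) rather than an artifact of the estimate.
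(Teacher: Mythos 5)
Your proposal is correct and follows essentially the same route as the paper's proof: reduce to $e_{12}*e_{2j}$ via \cref{e_ik*e_kj-does-not-depend-on-k}, dispose of $j=n-1$ trivially, and kill the $e_{1,n-1}$-coefficient by writing $(e_{12}*e_{2j})\cdot e_{n-1,n}=e_{12}\cdot(e_{2j}*e_{n-1,n})$ and invoking \cref{e_ij*e_kl=0-for-j-ne-k-and-j-i>1-or-l-k>1} for $j>3$, with $j=3$ forcing the excluded pair $(k,j)=(2,3)$. Your index bookkeeping matches the paper's argument exactly.
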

\begin{proof}
	If $j=n-1$, then the result trivially follows from \cref{e_ik*e_kj-for-1-le-i<k<j<n}, so let $j<n-1$. In view of \cref{e_ik*e_kj-for-1-le-i<k<j<n} it suffices to show that $(e_{1k}*e_{kj})(1,n-1)=0$ for $(k,j)\ne (2,3)$. By \cref{e_ik*e_kj-does-not-depend-on-k} we have $e_{1k}*e_{kj}=e_{12}*e_{2j}$. If $j>3$, then $e_{2j}*e_{n-1,n}=0$ by \cref{e_ij*e_kl=0-for-j-ne-k-and-j-i>1-or-l-k>1}, whence
	\begin{align*}
		(e_{12}*e_{2j})(1,n-1)=((e_{12}*e_{2j})\cdot e_{n-1,n})(1,n)=(e_{12}\cdot (e_{2j}*e_{n-1,n}))(1,n)=0.
	\end{align*}
	If $j=3$, then $k=2$, and we are done.
\end{proof}

\begin{cor}
	For all $1\le i<k<n$ we have $e_{ik}*e_{kn}\in\spn_K\{e_{in},e_{1n}\}$, unless $(i,k)=(n-2,n-1)$.
\end{cor}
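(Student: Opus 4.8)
The plan is to separate the generic range $2\le i<k<n$, where the dualization device of \cref{from-*-to-star} applies cleanly, from the boundary case $i=1$, which has to be handled by hand.

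For $2\le i<k<n$ I would apply \cref{e_1k*e_kj-in-span-e_1j-e_1n} to the $(12)$-matching product $\star$ from \cref{from-*-to-star}. Setting $k'=n-k+1$ and $j'=n-i+1$, the hypotheses $1<k'<j'\le n-1$ and $(k',j')\ne(2,3)$ translate precisely into $2\le i<k<n$ and $(i,k)\ne(n-2,n-1)$, so that $e_{1,k'}\star e_{k',j'}\in\spn_K\{e_{1,j'},e_{1n}\}$. Since $e_{1,k'}\star e_{k',j'}=\phi(\phi\m(e_{k',j'})*\phi\m(e_{1,k'}))=\phi(e_{ik}*e_{kn})$ by \cref{a-star-b=phi(phi-inv(b)*phi-inv(a))} and $\phi=\phi\m$, applying $\phi$ and using $n-j'+1=i$ gives $e_{ik}*e_{kn}\in\spn_K\{e_{in},e_{1n}\}$ on this whole range.

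The remaining case $i=1$ is the main obstacle: dualization breaks down here, because it would require \cref{e_1k*e_kj-in-span-e_1j-e_1n} at the forbidden value $j'=n-i+1=n$. First, for $n=3$ the only instance is $(i,k)=(1,2)=(n-2,n-1)$, which is excluded, so I may assume $n\ge 4$; and by \cref{e_ik*e_kj-does-not-depend-on-k} everything reduces to analyzing the single element $a:=e_{12}*e_{2n}$. I would then pin down its support in two steps. For the rows, for each $2\le l\le n$ one has $e_{l-1,l}*e_{12}\in\spn_K\{e_{1,n-1},e_{1n}\}$ by \cref{e_ij*e_kl-in-span-e_1_n-1_e_1n}, and right multiplication by $e_{2n}$ annihilates this span because $n\ge 4$; hence $e_{l-1,l}\cdot a=(e_{l-1,l}*e_{12})\cdot e_{2n}=0$ by the $(12)$-matching identity $(x*y)\cdot z=x\cdot(y*z)$, so $a$ is supported in the first row. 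For the columns, \cref{e_ij*e_kl=0-for-j-ne-k-and-j-i>1-or-l-k>1} gives $e_{2n}*e_{t,t+1}=0$ for every $2\le t\le n-1$ (as $n\ge 4$ and $t\ne n$), whence $a\cdot e_{t,t+1}=e_{12}\cdot(e_{2n}*e_{t,t+1})=0$; combined with the first-row support this forces $a(1,t)=0$ for all $t<n$, i.e. $e_{12}*e_{2n}\in\spn_K\{e_{1n}\}$. Since $e_{in}=e_{1n}$ when $i=1$, we have $\spn_K\{e_{1n}\}=\spn_K\{e_{in},e_{1n}\}$, which is the desired conclusion. The only genuinely delicate point throughout is recognizing that the slick duality argument does not reach $i=1$ and supplying this direct support analysis instead.
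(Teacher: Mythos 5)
Your proof is correct, and on the generic range it is exactly the paper's argument: the paper's entire proof of this corollary is the single line ``apply \cref{e_1k*e_kj-in-span-e_1j-e_1n} to the product $\star$ from \cref{from-*-to-star}'', and your substitution $(k',j')=(n-k+1,n-i+1)$ is the correct unwinding of that line. Where you differ is instructive: your observation that the dualization does not reach $i=1$ is accurate, since \cref{e_1k*e_kj-in-span-e_1j-e_1n} requires $j\le n-1$ while $i=1$ would need $j'=n$; so the paper's one-line proof literally covers only $2\le i$. The paper does not patch this inside the corollary --- instead the immediately following lemma proves the stronger statement $e_{1k}*e_{kn}\in\spn_K\{e_{1n}\}$ for all $n>3$ (and for $n=3$ the case $i=1$ is exactly the excluded pair $(n-2,n-1)$, as you note), and its proof is, step for step, your support analysis: reduce to $e_{12}*e_{2n}$ by \cref{e_ik*e_kj-does-not-depend-on-k}, kill rows $2,\dots,n$ via $e_{l-1,l}\cdot(e_{12}*e_{2n})=(e_{l-1,l}*e_{12})\cdot e_{2n}=0$ using \cref{e_ij*e_kl-in-span-e_1_n-1_e_1n}, and kill the columns via $(e_{12}*e_{2n})\cdot e_{t,t+1}=e_{12}\cdot(e_{2n}*e_{t,t+1})=0$ using \cref{e_ij*e_kl=0-for-j-ne-k-and-j-i>1-or-l-k>1}; the paper lets $t$ run over all $1\le t<n$ and concludes $e_{12}*e_{2n}\in\Ann(UT_n(K),\cdot)=\spn_K\{e_{1n}\}$, which is equivalent to your restriction $2\le t\le n-1$ combined with first-row support. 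The net effect is that your writeup is more self-contained and slightly more careful than the paper's: it proves the corollary on the full stated range $1\le i$, whereas the paper tacitly delegates the boundary case $i=1$ to the next lemma.
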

\begin{proof}
	This follows by applying \cref{e_1k*e_kj-in-span-e_1j-e_1n} to the product $\star$ from \cref{from-*-to-star}.
\end{proof}

\begin{lem}
	Let $n>3$. Then for all $1<k<n$ we have $e_{1k}*e_{kn}\in\spn_K\{e_{1n}\}$.
\end{lem}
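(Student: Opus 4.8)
The plan is to reduce the claim to a single product and then pin down its support entry by entry. By \cref{e_ik*e_kj-does-not-depend-on-k} applied with $i=1$ and $j=n$ (legitimate since $j=n>i+1$), one has $e_{1k}*e_{kn}=e_{12}*e_{2n}$ for every $1<k<n$, so it is enough to show that $x:=e_{12}*e_{2n}$ lies in $\spn_K\{e_{1n}\}$. Throughout I would repeatedly invoke the second of the two $(12)$-matching relations of \cref{matching-ass-identity}, namely $(a*b)\cdot c=a\cdot(b*c)$, to slide the factors $e_{12}$ and $e_{2n}$ across the $*$.

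First I would confine the support of $x$ to the single column $n$ and the top two rows. For $s>2$ the relation gives $e_{1s}\cdot x=e_{1s}\cdot(e_{12}*e_{2n})=(e_{1s}*e_{12})\cdot e_{2n}$, and here $e_{1s}*e_{12}=0$ by \cref{e_ij*e_kl=0-for-j-ne-k-and-j-i>1-or-l-k>1} (the hypotheses $j\ne k$ and $j>i+1$ read $s\ne 1$ and $s>2$); reading off the $(1,t)$-entry then forces $x(s,t)=0$ for all $s>2$. Dually, for $1\le t\le n-1$ the relation gives $x\cdot e_{t,t+1}=(e_{12}*e_{2n})\cdot e_{t,t+1}=e_{12}\cdot(e_{2n}*e_{t,t+1})$, and $e_{2n}*e_{t,t+1}=0$ again by \cref{e_ij*e_kl=0-for-j-ne-k-and-j-i>1-or-l-k>1} (here $n\ne t$ and $n>3$, which is exactly where the hypothesis $n>3$ enters); hence $x(s,t)=0$ for every $t<n$. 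Combining the two, $x\in\spn_K\{e_{1n},e_{2n}\}$.

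It then remains to kill the coefficient of $e_{2n}$, and this last step is the crux I expect to be the main obstacle, since the two earlier support arguments are essentially formal. I would extract the offending coefficient as $x(2,n)=(e_{12}\cdot x)(1,n)$ and rewrite $e_{12}\cdot x=e_{12}\cdot(e_{12}*e_{2n})=(e_{12}*e_{12})\cdot e_{2n}$ once more by the same relation; comparing the $(1,n)$-entries yields $x(2,n)=(e_{12}*e_{12})(1,2)$. Now \cref{e_ij*e_kl-in-span-e_1_n-1_e_1n} applies to $e_{12}*e_{12}=e_{1,2}*e_{1,2}$ (its indices satisfy $i+1=2\ne 1=j$), giving $e_{12}*e_{12}\in\spn_K\{e_{1,n-1},e_{1n}\}$, and since $n>3$ forces $n-1\ne 2$, the $(1,2)$-entry of this element vanishes; hence $x(2,n)=0$ and $x\in\spn_K\{e_{1n}\}$. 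The hypothesis $n>3$ is thus used precisely twice, in the right-multiplication vanishing and in this final entry computation, and both arguments genuinely break down at $n=3$, consistent with the border case $(i,k)=(n-2,n-1)$ excluded in the preceding corollaries.
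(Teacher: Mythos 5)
Your proof is correct, and it shares its skeleton with the paper's own: the same reduction $e_{1k}*e_{kn}=e_{12}*e_{2n}$ via \cref{e_ik*e_kj-does-not-depend-on-k}, followed by multiplying with matrix units and sliding them across $*$ through the $(12)$-matching identity $(a*b)\cdot c=a\cdot(b*c)$. Where you diverge is the row-killing step. The paper left-multiplies by the generators $e_{l-1,l}$ for all $1<l\le n$ and uses the span lemma \cref{e_ij*e_kl-in-span-e_1_n-1_e_1n} (so $e_{l-1,l}*e_{12}\in\spn_K\{e_{1,n-1},e_{1n}\}$, whence $(e_{l-1,l}*e_{12})\cdot e_{2n}=0$ because $n>3$); this annihilates rows $2,\dots,n$ in one stroke, so after the right-multiplication step (which is identical to yours, resting on $e_{2n}*e_{l,l+1}=0$ from \cref{e_ij*e_kl=0-for-j-ne-k-and-j-i>1-or-l-k>1}) the element lies directly in $\Ann(UT_n(K),\cdot)=\spn_K\{e_{1n}\}$, with nothing left over. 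You instead left-multiply by $e_{1s}$, $s>2$, and invoke only the vanishing lemma \cref{e_ij*e_kl=0-for-j-ne-k-and-j-i>1-or-l-k>1}, which cannot reach row $2$ (the product $e_{12}*e_{12}$ falls outside its hypotheses), so you must patch the residual $(2,n)$-entry by the extra computation $x(2,n)=\bigl((e_{12}*e_{12})\cdot e_{2n}\bigr)(1,n)=(e_{12}*e_{12})(1,2)=0$, correctly justified by \cref{e_ij*e_kl-in-span-e_1_n-1_e_1n} and $n>3$. Both routes are valid, rely on the same three lemmas, and use $n>3$ in exactly two places; the paper's choice of multipliers makes the annihilator argument close in one fewer step, while your version has the mild virtue of isolating precisely which entry is the delicate one at the $n=3$ boundary.
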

\begin{proof}
	In view of \cref{e_ik*e_kj-does-not-depend-on-k} it suffices to prove the result for $k=2$. For all $1<l\le n$ we have $e_{l-1,l}*e_{12}\in\spn_K\{e_{1,n-1},e_{1n}\}$ by \cref{e_ij*e_kl-in-span-e_1_n-1_e_1n}. Then $(e_{l-1,l}*e_{12})\cdot e_{2n}=0$ because $n>3$, whence $e_{l-1,l}\cdot(e_{12}*e_{2n})=0$. Now, for all $1\le l<n$ we have $e_{2n}*e_{l,l+1}=0$ by \cref{e_ij*e_kl=0-for-j-ne-k-and-j-i>1-or-l-k>1}, because $n>3$. Then $e_{12}\cdot(e_{2n}*e_{l,l+1})=0$, whence $(e_{12}*e_{2n})\cdot e_{l,l+1}=0$. Since $\{e_{l,l+1}: 1\le l<n\}$ generate $UT_n(K)$, we have $e_{12}*e_{2n}\in\Ann(UT_n(K),\cdot)=\spn_K\{e_{1n}\}$.
\end{proof}

\begin{lem}
	Let $n>3$. Then for all $1\le i<k<j\le n$ with $(i,j)\ne (1,n)$ we have $(e_{ik}*e_{kj})(i,j)=(e_{12}*e_{23})(1,3)$.
\end{lem}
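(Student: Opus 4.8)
The plan is to exploit the $(12)$-matching identity $(a*b)\cdot c=a\cdot(b*c)$, valid for all $a,b,c\in UT_n(K)$ by \cref{matching-ass-identity} with $\sg=(12)$, and to extract equalities of coefficients by evaluating this identity at single positions. By \cref{e_ik*e_kj-does-not-depend-on-k} the element $e_{ik}*e_{kj}$ does not depend on $k$, so for $1\le i<j\le n$ with $j>i+1$ I set $c_{ij}:=(e_{ik}*e_{kj})(i,j)$, which is well-defined independently of the choice of $i<k<j$. The claim to be proved is that $c_{ij}=c_{13}$ whenever $(i,j)\ne(1,n)$.

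First I would establish a reduction of the second index. Applying the identity to $a=e_{i',i}$, $b=e_{i,i+1}$, $c=e_{i+1,j}$ with $1\le i'<i$ and $i+2\le j\le n$ gives
\begin{align*}
	(e_{i',i}*e_{i,i+1})\cdot e_{i+1,j}=e_{i',i}\cdot(e_{i,i+1}*e_{i+1,j}).
\end{align*}
Both inner products are of the shape $e_{pq}*e_{qr}$: the element $e_{i',i}*e_{i,i+1}$ has value $c_{i',i+1}$ at $(i',i+1)$, and $e_{i,i+1}*e_{i+1,j}$ has value $c_{ij}$ at $(i,j)$. Right multiplication by $e_{i+1,j}$ selects the $(i+1)$-th column while left multiplication by $e_{i',i}$ selects the $i$-th row, so evaluating both sides at the position $(i',j)$ yields $c_{i',i+1}=c_{ij}$. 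Taking $i'=1$ I obtain
\begin{align*}
	c_{ij}=c_{1,i+1}\qquad\text{for all }2\le i\text{ with }i+2\le j\le n.
\end{align*}
Next, applying the same identity to $a=e_{12}$, $b=e_{2m}$, $c=e_{m,m'}$ with $3\le m<m'\le n$ and evaluating at $(1,m')$ produces the auxiliary relation $c_{1m}=c_{2,m'}$.

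Combining these two relations completes the argument. For $3\le m\le n-1$ (here the hypothesis $n>3$ is what guarantees such an $m$ together with some $m'\in(m,n]$), the auxiliary relation gives $c_{1m}=c_{2,m+1}$, and the first relation with $i=2$ gives $c_{2,m+1}=c_{1,3}$, so $c_{1m}=c_{13}$. It then remains to treat an arbitrary admissible $(i,j)$ with $j\ge i+2$ and $(i,j)\ne(1,n)$: if $i\ge 2$, then $j\le n$ forces $i\le n-2$, the first relation gives $c_{ij}=c_{1,i+1}$ with $3\le i+1\le n-1$, and the previous step gives $c_{1,i+1}=c_{13}$; if $i=1$, then $j\ge 3$ and $j\ne n$ place $j$ in $[3,n-1]$, whence $c_{1j}=c_{13}$ directly. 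I expect the main obstacle to be purely the bookkeeping of admissible index ranges at each step, together with the observation that the excluded value $c_{1n}$ is exactly the one the auxiliary relation cannot reach (it would require $m'>n$), which is why the restriction $(i,j)\ne(1,n)$ is indispensable.
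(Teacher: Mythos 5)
Your proof is correct and takes essentially the same route as the paper: both apply the interchange half of the $(12)$-matching identity, $(a*b)\cdot c=a\cdot(b*c)$, to triples of matrix units and read off single entries, relying on \cref{e_ik*e_kj-does-not-depend-on-k} for the $k$-independence. Your two transport relations $c_{i',i+1}=c_{ij}$ and $c_{1m}=c_{2,m'}$ are precisely the paper's left-bordering by $e_{1i}$ and right-bordering by $e_{kn}$ computations, merely chained to the anchor $c_{13}$ instead of the paper's intermediate anchor $(e_{23}*e_{3n})(2,n)$.
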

\begin{proof}
	It suffices to prove that $(e_{ik}*e_{kj})(i,j)=(e_{23}*e_{3n})(2,n)$, because
	\begin{align*}
		(e_{23}*e_{3n})(2,n)=(e_{12}\cdot(e_{23}*e_{3n}))(1,n)&=((e_{12}*e_{23})\cdot e_{3n})(1,n)
		=(e_{12}*e_{23})(1,3).
	\end{align*}
	
	Assume that $i>1$. Then $e_{ik}*e_{kj}\in\spn_K\{e_{ij},e_{1n},e_{2n}\}$ by \cref{e_ik*e_kj-for-1<i<k<j-le-n} and $e_{1i}*e_{ik}\in\spn_K\{e_{1k},e_{1,n-1},e_{1n}\}$ by \cref{e_ik*e_kj-for-1-le-i<k<j<n}. Consequently,
	\begin{align*}
		(e_{ik}*e_{kj})(i,j)=(e_{1i}\cdot(e_{ik}*e_{kj}))(1,j)=((e_{1i}*e_{ik})\cdot e_{kj})(1,j)=(e_{1i}*e_{ik})(1,k),
	\end{align*}
	the last being $(e_{12}*e_{2k})(1,k)$ by \cref{e_ik*e_kj-does-not-depend-on-k}. Now, $k<n$, so 
	\begin{align*}
		(e_{12}*e_{2k})(1,k)&=((e_{12}*e_{2k})\cdot e_{kn})(1,n)=(e_{12}\cdot (e_{2k}*e_{kn}))(1,n)
		=(e_{2k}*e_{kn})(2,n)=(e_{23}*e_{3n})(2,n),
	\end{align*}
	where the last equality is again by \cref{e_ik*e_kj-does-not-depend-on-k}. 
	
	Now let $i=1$, so that $j<n$. Then $e_{ik}*e_{kj}=e_{1k}*e_{kj}=e_{12}*e_{2j}$, the latter being by \cref{e_ik*e_kj-does-not-depend-on-k}. We have
	\begin{align*}
		(e_{12}*e_{2j})(1,j)=((e_{12}*e_{2j})\cdot e_{jn})(1,n)=(e_{12}\cdot(e_{2j}* e_{jn}))(1,n)=(e_{2j}*e_{jn})(2,n).
	\end{align*}
	The last again equals $(e_{23}*e_{3n})(2,n)$ by \cref{e_ik*e_kj-does-not-depend-on-k}.
\end{proof}

\begin{thrm}\label{(12)-matching-UT_n-full-descr}
	If $n>4$, then the $(12)$-matching structures on $UT_n(K)$ are exactly the associative products that are linear combinations of the structures:
	\begin{enumerate}
		\item\label{e_ii+1*e_jj+1=e_1n} $\M^{(12),1}_{i,j} (1<i\le n, 1\le j<n):$ $e_{i-1,i}*e_{j,j+1}=e_{1n}$; 
		
		\item\label{e_12*e_ii+1=e_1n-1_e_ii+1*e_n-1n=e_2n} $\M^{(12),2}_i (1\le i<n):$ 
			$e_{12}*e_{i,i+1}=e_{1,n-1}$, $e_{i,i+1}*e_{n-1,n}=e_{2n}$;
		
		\item\label{e_ik*e_kj=e_1n} $\M^{(12),3}_{i,j} (1\le i<j\le n, j>i+2):$ $e_{ik}*e_{kj}=e_{1n}$ for all $i<k<j$;

		\item\label{e_ik*e_kj=e_ij} $\M^{(12),4}:$ $e_{ik}*e_{kj}=e_{ij}$ for all $1\le i<k<j\le n$ with $(i,j)\ne(1,n)$.
	\end{enumerate}
	If $n=4$ (resp. $n=3$), then the $(12)$-matching structures on $UT_n(K)$ are exactly the associative products that are linear combinations of  \cref{e_ii+1*e_jj+1=e_1n,e_12*e_ii+1=e_1n-1_e_ii+1*e_n-1n=e_2n,e_ik*e_kj=e_1n} (resp. \cref{e_ii+1*e_jj+1=e_1n,e_12*e_ii+1=e_1n-1_e_ii+1*e_n-1n=e_2n}) above.
\end{thrm}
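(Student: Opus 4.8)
The plan is to prove the two inclusions separately. The forward inclusion is immediate: each of the families $\M^{(12),1}_{i,j}$, $\M^{(12),2}_i$, $\M^{(12),3}_{i,j}$ and $\M^{(12),4}$ has already been shown to be $(12)$-matching with $\cdot$ (the first by \cref{M^id4_ij-totally-comp}, the remaining three by the lemmas of \cref{sec-(12)-matching-classes}), and both conditions in \cref{matching-ass-identity} with $\sg=(12)$ are linear in the unknown product $*$. Hence every linear combination of the four families satisfies \cref{matching-ass-identity} with $\sg=(12)$, and any such combination that is moreover associative is a $(12)$-matching structure.

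For the reverse inclusion, let $*$ be a $(12)$-matching structure. Being bilinear, $*$ is determined by its values $e_{ab}*e_{cd}$ on pairs of matrix units, and the strategy is to read these off from the decomposition lemmas above and match them coefficient by coefficient to the four families. I would first treat the \emph{non-composable} pairs ($b\ne c$): by \cref{e_ij*e_kl=0-for-j-ne-k-and-j-i>1-or-l-k>1} such a product vanishes unless both factors are generators, and for a surviving $e_{i,i+1}*e_{j,j+1}$ with $i+1\ne j$ the value is confined to $\spn_K\{e_{1,n-1},e_{1n},e_{2n}\}$ by \cref{e_ij*e_kl-in-span-e_1_n-1_e_1n,e_ij*e_kl-in-span-e_1n_e_2n,e_ii+1*e_jj+1-in-span-e_1n,e_12*e_n-1_n-in-span-e_1_n-1_e_1n_e_2n}, the interior cases lying already in $\spn_K\{e_{1n}\}$. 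The $e_{1n}$-component of each generator product is recorded as the coefficient of the appropriate $\M^{(12),1}$, while $e_{1,n-1}$- and $e_{2n}$-components can occur only in the boundary products $e_{12}*e_{i,i+1}$ and $e_{i,i+1}*e_{n-1,n}$ and, by the coupling \cref{e_12*e_ii+1(1_n-1)=e_ii+1*e_n-1n(2_n)}, are paired exactly as in $\M^{(12),2}_i$.

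For the \emph{composable} pairs $e_{ik}*e_{kj}$ ($j>i+1$), \cref{e_ik*e_kj-does-not-depend-on-k} reduces everything to $e_{i,i+1}*e_{i+1,j}$, and the last of the preceding lemmas supplies the decisive fact that the diagonal value $(e_{ik}*e_{kj})(i,j)$ is one and the same scalar $\lambda=(e_{12}*e_{23})(1,3)$ for every composable pair with $(i,j)\ne(1,n)$; this $\lambda$ is the coefficient of $\M^{(12),4}$. After subtracting $\lambda\,\M^{(12),4}$, the residual composable values lie in $\spn_K\{e_{1n}\}$ in the interior by \cref{e_ik*e_kj-for-1-le-i<k<j<n,e_ik*e_kj-for-1<i<k<j-le-n,e_1k*e_kj-in-span-e_1j-e_1n}; the $e_{1n}$-component for $j>i+2$ is the coefficient of $\M^{(12),3}_{i,j}$, the case $j=i+2$ is a composable generator product whose $e_{1n}$-part is already absorbed into $\M^{(12),1}_{i+1,i+1}$, and any surviving boundary $e_{1,n-1}$- or $e_{2n}$-components are again those of $\M^{(12),2}$. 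Collecting all the coefficients produces a linear combination of the four families agreeing with $*$ on every pair of matrix units, hence equal to $*$; since $*$ is associative by hypothesis, this is an associative linear combination, as required.

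Finally I would record the low-dimensional coincidences. For $n=4$ a direct comparison gives $\M^{(12),4}=\M^{(12),2}_2$ --- both prescribe $e_{12}*e_{23}=e_{13}$ and $e_{23}*e_{34}=e_{24}$ --- so $\M^{(12),4}$ may be dropped; for $n=3$ the inequality $j>i+2$ is never satisfiable, so the family $\M^{(12),3}_{i,j}$ is empty, and the only composable triple $(1,2,3)$ has $(i,j)=(1,n)$, so $\M^{(12),4}$ is trivial, leaving exactly $\M^{(12),1}_{i,j}$ and $\M^{(12),2}_i$. I expect the main obstacle to be precisely this bookkeeping: one must verify that the supports of the four families exhaust the spans delivered by the decomposition lemmas with no component of $*$ left unaccounted for and none counted twice, that the coupling between $e_{1,n-1}$- and $e_{2n}$-components expressed in \cref{e_12*e_ii+1(1_n-1)=e_ii+1*e_n-1n(2_n)} is faithfully encoded by $\M^{(12),2}$, and that the stated redundancies for $n\le 4$ hold.
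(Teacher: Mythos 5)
Your proposal is correct and follows essentially the same route as the paper's own proof: the forward inclusion by linearity from the classes of \cref{sec-(12)-matching-classes}, and the reverse inclusion by assembling the decomposition lemmas of \cref{sec-(12)-e_ij*e_kl-j-ne-k,sec-(12)-e_ik*e_kj} into coefficients of the four families, with the coupling \cref{e_12*e_ii+1(1_n-1)=e_ii+1*e_n-1n(2_n)} producing $\M^{(12),2}_i$, the constant-diagonal lemma producing the coefficient of $\M^{(12),4}$, and the same identifications $\M^{(12),3}_{i,i+2}=\M^{(12),1}_{i+1,i+1}$, $\M^{(12),4}=\M^{(12),2}_2$ ($n=4$) and the emptiness/triviality of $\M^{(12),3}$, $\M^{(12),4}$ ($n=3$). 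The only cosmetic difference is that you subtract $\lambda\,\M^{(12),4}$ before reading off the remaining coefficients, whereas the paper writes out the full decomposition with all coefficients at once; the bookkeeping you flag as the main obstacle is exactly what the paper's displayed case lists carry out.
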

\begin{proof}
	Let $*$ be a $(12)$-matching structure on $UT_n(K)$. Then by the results of \cref{sec-(12)-e_ij*e_kl-j-ne-k} 
	there exist $\{\af_{ij}: i\ne j\text{ unless }n=3\text{ and }i=j=2\}\sst K$ and $\{\bt_j\}_{j=1}^{n-1}\sst K$ such that
	\begin{align*}
		e_{i-1,i}*e_{j,j+1}&=\af_{ij}e_{1n},\ 3\le i\le n,\ 1\le j\le n-2,\ i\ne j,\\
		e_{12}*e_{j,j+1}&=\bt_je_{1,n-1}+\af_{2j}e_{1n},\ 1\le j\le n-2,\ j\ne 2,\\
		e_{j,j+1}*e_{n-1,n}&=\bt_je_{2n}+\af_{j+1,n-1}e_{1n},\ 2\le j\le n-1,\ j\ne n-2,\\
		e_{12}*e_{n-1,n}&=\bt_{n-1}e_{1,n-1}+\bt_1e_{2n}+\af_{2,n-1}e_{1n}.
	\end{align*}
	Moreover, by the results of \cref{sec-(12)-e_ik*e_kj} there exist $\{\af_{ii}\}_{i=2}^{n-1}\sst K$, $\bt_2,\bt_{n-2}\in K$, $\{\gm_{ij}: 2\le i+1<j<n\}\sst K$ and $\eta\in K$ such that
	\begin{align*}
		e_{ik}*e_{kj}&=\eta e_{ij}+\gm_{ij}e_{1n},\ j>i+2,\ (i,j)\ne (1,n),\\
		e_{i-1,i}*e_{i,i+1}&=\eta e_{i-1,i+1}+\af_{ii}e_{1n},\ 2<i<n-1,\\
		e_{1k}*e_{kn}&=\gm_{1n}e_{1n},\ n>3,\\
		e_{12}*e_{23}&=\begin{cases}
			\bt_2e_{12} + \bt_1e_{23} + \af_{22}e_{13}, & n=3,\\
			\bt_2e_{13}+\af_{22}e_{14}, & n=4,\\
			\eta e_{13}+\bt_2e_{1,n-1}+\af_{22}e_{1n}, & n>4,
		\end{cases}\\
		e_{n-2,n-1}*e_{n-1,n}&=
		\begin{cases}
			\bt_2e_{12} + \bt_1e_{23} + \af_{22}e_{13}, & n=3,\\
			\bt_2e_{24}+\af_{33}e_{14}, & n=4,\\
			\eta e_{n-2,n}+\bt_{n-2}e_{2n}+\af_{n-1,n-1}e_{1n}, & n>4.
		\end{cases}
	\end{align*}
	Thus, $*$ is the linear combination $\sum_{ij} \af_{ij}\M^{(12),1}_{i,j}+\sum_j\bt_j\M^{(12),2}_j+\sum_{i,j}\gm_{ij}\M^{(12),3}_{i,j}+\eta\M^{(12),4}$. Observe that, whenever $n=3$, the structures $\M^{(12),3}_{i,j}$ and $\M^{(12),4}$ do not exist due to the restrictions on the indices, and whenever $n=4$, we have $\M^{(12),4}=\M^{(12),2}_2$. The restriction $j>i+2$ in \cref{e_ik*e_kj=e_1n} was imposed to exclude the coincidence $\M^{(12),3}_{i,i+2}=\M^{(12),1}_{i+1,i+1}$.
	
	Conversely, each of the products \cref{e_ii+1*e_jj+1=e_1n,e_12*e_ii+1=e_1n-1_e_ii+1*e_n-1n=e_2n,e_ik*e_kj=e_1n,e_ik*e_kj=e_ij} is a $(12)$-matching structure on $UT_n(K)$ as seen in \cref{sec-(12)-matching-classes}. So, any associative linear combination of them is also a $(12)$-matching structure on $UT_n(K)$.
\end{proof}

\begin{rem}
	There are $(n-1)^2$ products of the form \cref{e_ii+1*e_jj+1=e_1n}, $n-1$ products of the form \cref{e_12*e_ii+1=e_1n-1_e_ii+1*e_n-1n=e_2n} and $\frac 12(n-2)(n-3)$ products of the form \cref{e_ik*e_kj=e_1n}. In total, there are $\frac n2(3n-7)+3$ linearly independent products of the form \cref{e_ii+1*e_jl=e_1l,e_ki*e_jj+1=e_kn,e_ki*e_jl=e_kl} and $1$ product of the form \cref{e_ik*e_kj=e_ij} (which is linearly independent with \cref{e_ii+1*e_jl=e_1l,e_ki*e_jj+1=e_kn,e_ki*e_jl=e_kl} only for $n>4$).
\end{rem}

\subsection{The complete classification of $(12)$-matching structures on $UT_3(K)$}\label{sec-(12)-match-n=3}

We are now ready to give a complete classification of the $(12)$-matching structures on $UT_3(K)$ up to an isomorphism. Specifying \cref{(12)-matching-UT_n-full-descr} to the case $n=3$, we get the following.

\begin{cor}\label{structures-M^(12)1_ij-n=3}
	The $(12)$-matching structures on $UT_3(K)$ are exactly the associative products that are linear combinations of the structures:
	\begin{multicols}{2}
	\begin{enumerate}
		\item\label{M^(12)1_21-n=3} $\M^{(12),1}_{2,1}:$ $e_{12}*e_{12}=e_{13}$;
		\item\label{M^(12)1_22-n=3} $\M^{(12),1}_{2,2}:$ $e_{12}*e_{23}=e_{13}$;
		\item\label{M^(12)1_31-n=3} $\M^{(12),1}_{3,1}:$ $e_{23}*e_{12}=e_{13}$;
		\item\label{M^(12)1_32-n=3} $\M^{(12),1}_{3,2}:$ $e_{23}*e_{23}=e_{13}$;
		\item\label{M^(12)2_1-n=3} $\M^{(12),2}_1:$ 
		$e_{12}*e_{12}=e_{12}$, $e_{12}*e_{23}=e_{23}$;
		\item\label{M^(12)2_2-n=3} $\M^{(12),2}_2:$ 
		$e_{12}*e_{23}=e_{12}$, $e_{23}*e_{23}=e_{23}$.
	\end{enumerate}
	\end{multicols}
	i.e. the associative products of the form
	\begin{align}\label{(12)-matching-UT_3(K)-general-form}
		\begin{array}{rclrcl}
			e_{12}*e_{12}&=&\lb_5 e_{12}+\lb_1 e_{13},& e_{23}*e_{12}&=&\lb_3 e_{13},\\
			e_{12}*e_{23}&=&\lb_6 e_{12}+\lb_2 e_{13}+\lb_5 e_{23},& e_{23}*e_{23}&=&\lb_4 e_{13}+\lb_6 e_{23}\\
		\end{array}
	\end{align}
	for some $\{\lb_i\}_{i=1}^6\sst K$.
\end{cor}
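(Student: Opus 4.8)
The plan is to specialize \cref{(12)-matching-UT_n-full-descr} to $n=3$, record which of its four generating families survive the index restrictions, and then assemble the general linear combination to recover the explicit form \cref{(12)-matching-UT_3(K)-general-form}. Two of the families vanish when $n=3$: the products $\M^{(12),3}_{i,j}$ require $j>i+2$, which is impossible for $1\le i<j\le 3$, and $\M^{(12),4}$ requires $1\le i<k<j\le 3$ with $(i,j)\ne(1,3)$, which forces $(i,k,j)=(1,2,3)$ and hence the excluded pair $(i,j)=(1,3)$. Only $\M^{(12),1}_{i,j}$ with $i\in\{2,3\}$ and $j\in\{1,2\}$, together with $\M^{(12),2}_i$ with $i\in\{1,2\}$, remain, so the theorem reduces to saying that the $(12)$-matching structures are precisely the associative linear combinations of these six products.

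First I would substitute $n=3$ into the defining formulas to make the six structures explicit: $\M^{(12),1}_{i,j}$ becomes $e_{i-1,i}*e_{j,j+1}=e_{13}$, giving the four products in the statement, while $\M^{(12),2}_i$ becomes the pair $e_{12}*e_{i,i+1}=e_{12}$, $e_{i,i+1}*e_{23}=e_{23}$, giving the two remaining products. Then I would form a general linear combination with coefficients $\lb_1,\dots,\lb_4$ on the four $\M^{(12),1}$ products and $\lb_5,\lb_6$ on $\M^{(12),2}_1,\M^{(12),2}_2$, and collect the contributions to each of the four basis products $e_{12}*e_{12}$, $e_{12}*e_{23}$, $e_{23}*e_{12}$, $e_{23}*e_{23}$. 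This produces exactly \cref{(12)-matching-UT_3(K)-general-form}; conversely every product of that form is manifestly the corresponding combination, so the two descriptions agree. Since associativity is imposed identically on both sides, it plays no role in this equivalence and is simply carried along.

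There is no genuine obstacle, as the result is a direct corollary of \cref{(12)-matching-UT_n-full-descr}. The only point requiring attention is the bookkeeping of overlapping contributions: $\M^{(12),2}_1$ feeds into both $e_{12}*e_{12}$ and $e_{12}*e_{23}$, while $\M^{(12),2}_2$ feeds into both $e_{12}*e_{23}$ and $e_{23}*e_{23}$. Consequently the coefficient $\lb_5$ appears in both $e_{12}*e_{12}$ and $e_{12}*e_{23}$, and $\lb_6$ appears in both $e_{12}*e_{23}$ and $e_{23}*e_{23}$, precisely as displayed in \cref{(12)-matching-UT_3(K)-general-form}.
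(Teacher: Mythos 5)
Your proposal is correct and takes essentially the same route as the paper, which states \cref{structures-M^(12)1_ij-n=3} as a direct specialization of \cref{(12)-matching-UT_n-full-descr} to $n=3$ with no separate argument; your observation that $\M^{(12),3}_{i,j}$ and $\M^{(12),4}$ disappear under the index restrictions matches the remark made in the theorem's proof. The explicit substitution $e_{1n}=e_{13}$, $e_{1,n-1}=e_{12}$, $e_{2n}=e_{23}$ and the bookkeeping of the shared coefficients $\lb_5,\lb_6$ correctly recover \cref{(12)-matching-UT_3(K)-general-form}.
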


As above, we omit the proofs of the following two lemmas, since they are just a  straightforward calculation.
\begin{lem}\label{associators-UT_3-(12)-match}
	We have the following non-trivial associators of the basis elements $\{e_{12},e_{23},e_{13}\}$ of $UT_3(K)$ under \cref{(12)-matching-UT_3(K)-general-form}:
	\begin{align}
		[e_{12}, e_{12}, e_{23}]&= -\lb_5\lb_6e_{12} - \lb_1\lb_6e_{13},\label{[e_12_e_12_e_23]-(12)}\\
		[e_{12}, e_{23}, e_{12}]&= \lb_5\lb_6e_{12} + (\lb_1\lb_6+\lb_3\lb_5)e_{13},\label{[e_12_e_23_e_12]-(12)}\\
		[e_{12}, e_{23}, e_{23}]&= \lb_4\lb_5e_{13}+\lb_5\lb_6e_{23},\label{[e_12_e_23_e_23]-(12)}\\
		[e_{23}, e_{12}, e_{12}]&= -\lb_3\lb_5e_{13},\notag\\
		[e_{23}, e_{12}, e_{23}]&= -(\lb_3\lb_6 + \lb_4\lb_5)e_{13}-\lb_5\lb_6e_{23},\notag\\
		[e_{23}, e_{23}, e_{12}]&= \lb_3\lb_6e_{13}.\label{[e_23_e_23_e_12]-(12)}
	\end{align}
\end{lem}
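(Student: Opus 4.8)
The plan is to regard $*$ as an arbitrary bilinear product of the form \eqref{(12)-matching-UT_3(K)-general-form} and to compute the associator $[a,b,c]=(a*b)*c-a*(b*c)$ on all triples of basis vectors directly. The first point to record is that \eqref{(12)-matching-UT_3(K)-general-form} already encodes the \emph{full} multiplication table: every product having $e_{13}$ as a factor vanishes,
\begin{align*}
	e_{12}*e_{13}=e_{13}*e_{12}=e_{23}*e_{13}=e_{13}*e_{23}=e_{13}*e_{13}=0.
\end{align*}
This holds by the convention that omitted products are zero, and may also be seen from the $(12)$-matching identities $(a\cdot b)*c=a*(b\cdot c)$ and $(a*b)\cdot c=a\cdot(b*c)$ (which $*$ satisfies, being a linear combination of the structures of \cref{structures-M^(12)1_ij-n=3}): writing $e_{13}=e_{12}\cdot e_{23}$ gives $e_{13}*c=e_{12}*(e_{23}\cdot c)$ and $a*e_{13}=(a\cdot e_{12})*e_{23}$, and in each case the inner $\cdot$-product of two basis vectors of $\{e_{12},e_{23},e_{13}\}$ is $0$. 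Hence $e_{13}$ annihilates $UT_3(K)$ under $*$, so any associator having $e_{13}$ in one of its three slots is zero. This reduces the computation to the eight triples with entries in $\{e_{12},e_{23}\}$.

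Second, I would halve the remaining work by exploiting the involution $\phi$ of \cref{inv-phi}, which for $n=3$ satisfies $\phi(e_{12})=e_{23}$, $\phi(e_{23})=e_{12}$, $\phi(e_{13})=e_{13}$. A short computation from the definition \eqref{a-star-b=phi(phi-inv(b)*phi-inv(a))} of the derived product $\star$ (in the style of \cref{antiauto=>structure-of-the-same-type}\eqref{star-associative}) shows that, writing $[\,\cdot\,]_*$ and $[\,\cdot\,]_\star$ for the associators with respect to $*$ and $\star$, one has $[a,b,c]_\star=-\phi\bigl([\phi(c),\phi(b),\phi(a)]_*\bigr)$ for all $a,b,c$, and that $\star$ is again of the form \eqref{(12)-matching-UT_3(K)-general-form} with parameters obtained by interchanging $\lb_1\leftrightarrow\lb_4$ and $\lb_5\leftrightarrow\lb_6$ (fixing $\lb_2,\lb_3$). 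Since the claimed formulas must hold for all parameter values, this identity lets one read off one associator of each pair from its partner. The relevant pairs are $\{[e_{12},e_{12},e_{23}],[e_{12},e_{23},e_{23}]\}$, $\{[e_{12},e_{23},e_{12}],[e_{23},e_{12},e_{23}]\}$, $\{[e_{23},e_{12},e_{12}],[e_{23},e_{23},e_{12}]\}$ and $\{[e_{12},e_{12},e_{12}],[e_{23},e_{23},e_{23}]\}$; the last pair vanishes by a one-line direct check. Thus only one associator from each of the three non-trivial pairs needs to be computed by hand.

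Third, I would carry out those three computations by plain bilinear expansion, deleting at each step the summands whose outer factor is $e_{13}$. For instance,
\begin{align*}
	(e_{12}*e_{12})*e_{23}&=(\lb_5e_{12}+\lb_1e_{13})*e_{23}=\lb_5(\lb_6e_{12}+\lb_2e_{13}+\lb_5e_{23}),\\
	e_{12}*(e_{12}*e_{23})&=e_{12}*(\lb_6e_{12}+\lb_2e_{13}+\lb_5e_{23})=\lb_6(\lb_5e_{12}+\lb_1e_{13})+\lb_5(\lb_6e_{12}+\lb_2e_{13}+\lb_5e_{23}),
\end{align*}
whose difference is $-\lb_5\lb_6e_{12}-\lb_1\lb_6e_{13}$, namely \eqref{[e_12_e_12_e_23]-(12)}; the triples $[e_{12},e_{23},e_{12}]$ and $[e_{23},e_{23},e_{12}]$ are even shorter, since the inner product $e_{23}*e_{12}=\lb_3e_{13}$ is annihilated on the second multiplication. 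The only real difficulty is organisational: one must track the ``feedback'' produced by the nonzero $e_{12}$- and $e_{23}$-components of $e_{12}*e_{12}$, $e_{12}*e_{23}$ and $e_{23}*e_{23}$ being multiplied a second time, and be careful with the signs and with the doubled cross-terms (such as the $2\lb_5\lb_6$ that collapses to $-\lb_5\lb_6$ above).
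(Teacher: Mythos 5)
Your proposal is correct: the paper omits this proof as ``just a straightforward calculation,'' and your direct bilinear expansion is exactly that calculation --- I checked all six stated formulas, the vanishing of $[e_{12},e_{12},e_{12}]$ and $[e_{23},e_{23},e_{23}]$, and the preliminary observation that $e_{13}$ annihilates everything under $*$ (so every associator with $e_{13}$ in a slot vanishes), all of which are right. Your extra symmetry device --- the identity $[a,b,c]_\star=-\phi\bigl([\phi(c),\phi(b),\phi(a)]_*\bigr)$ for the involution $\phi$ of \cref{inv-phi}, together with the verified fact that $\star$ lies in the same family \cref{(12)-matching-UT_3(K)-general-form} with $\lb_1\leftrightarrow\lb_4$, $\lb_5\leftrightarrow\lb_6$ swapped and $\lb_2,\lb_3$ fixed --- is a sound labor-saving refinement in the same spirit as the paper's own \cref{from-*-to-star}, not a genuinely different method.
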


\begin{lem}\label{(12)-matching-UT_3(K)-new-base}
	Let $\phi\in\Aut(UT_3(K))$ as in \cref{aut-UT_3-on-e_12_e_23_e_13} and $*$ be given by \cref{(12)-matching-UT_3(K)-general-form}. Then the corresponding product $\star$ on $UT_3(K)$ defined by \cref{a-star-b=phi(phi-inv(a)*phi-inv(b))} is as follows:
	\begin{align*}
		e_{12} \star e_{12} &= a_{11}^{-1}\lb_5e_{12} + a_{11}^{-2}(a_{11}a_{22}\lb_1 + a_{31}\lb_5)e_{13},\\
		e_{12} \star e_{23} &= a_{22}^{-1}\lb_6e_{12} + a_{11}^{-1}\lb_5e_{23} + a_{11}^{-1}a_{22}^{-1}(a_{11}a_{22}\lb_2 + a_{32}\lb_5 + a_{31}\lb_6)e_{13},\\
		e_{23} \star e_{12} &= \lb_3e_{13},\\
		e_{23} \star e_{23} &= a_{22}^{-1}\lb_6e_{23} + a_{22}^{-2}(a_{11}a_{22}\lb_4 + a_{32}\lb_6)e_{13}.
	\end{align*} 
\end{lem}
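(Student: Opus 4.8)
The plan is to evaluate the defining formula $a\star b=\phi(\phi\m(a)*\phi\m(b))$ from \cref{a-star-b=phi(phi-inv(a)*phi-inv(b))} directly on each of the four pairs $a,b\in\{e_{12},e_{23}\}$, since these products determine $\star$ on all of $UT_3(K)$. The only preparatory step is to record the action of $\phi\m$ on the basis. Inverting \cref{aut-UT_3-on-e_12_e_23_e_13} (equivalently, checking that $\phi$ carries the displayed elements back to the standard basis) gives
\begin{align*}
	\phi\m(e_{12})&=a_{11}\m e_{12}-a_{31}a_{11}^{-2}a_{22}\m e_{13},\\
	\phi\m(e_{23})&=a_{22}\m e_{23}-a_{32}a_{11}\m a_{22}^{-2}e_{13},\\
	\phi\m(e_{13})&=a_{11}\m a_{22}\m e_{13}.
\end{align*}

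Next I would observe that every product of basis elements involving $e_{13}$ vanishes under $*$. Indeed, $e_{13}=e_{12}\cdot e_{23}$, so the $(12)$-matching identities $(a\cdot b)*c=a*(b\cdot c)$ and $(a*b)\cdot c=a\cdot(b*c)$ together with $e_{12}\cdot e_{12}=e_{23}\cdot e_{12}=e_{23}\cdot e_{23}=e_{23}\cdot e_{13}=0$ yield $e_{13}*x=x*e_{13}=0$ for every basis element $x$; this is also precisely what the convention of omitting zero products in \cref{(12)-matching-UT_3(K)-general-form} encodes. Consequently, when $\phi\m(a)$ and $\phi\m(b)$ are expanded, their $e_{13}$-components are killed by $*$, and only the leading terms $a_{11}\m e_{12}$ and $a_{22}\m e_{23}$ survive.

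With these two observations the computation is mechanical. For example, bilinearity gives $\phi\m(e_{12})*\phi\m(e_{12})=a_{11}^{-2}(e_{12}*e_{12})=a_{11}^{-2}(\lb_5 e_{12}+\lb_1 e_{13})$, and applying $\phi$ from \cref{aut-UT_3-on-e_12_e_23_e_13} produces $a_{11}\m\lb_5 e_{12}+a_{11}^{-2}(a_{11}a_{22}\lb_1+a_{31}\lb_5)e_{13}$, which is the first claimed formula. The remaining three cases proceed identically: one has $\phi\m(e_{12})*\phi\m(e_{23})=a_{11}\m a_{22}\m(e_{12}*e_{23})$, $\phi\m(e_{23})*\phi\m(e_{12})=a_{11}\m a_{22}\m(e_{23}*e_{12})$ and $\phi\m(e_{23})*\phi\m(e_{23})=a_{22}^{-2}(e_{23}*e_{23})$, after which one substitutes \cref{(12)-matching-UT_3(K)-general-form}, applies $\phi$, and collects the coefficients of $e_{12}$, $e_{23}$ and $e_{13}$ to match the stated expressions.

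The proof has no genuine obstacle; it is pure bookkeeping, and the author's remark that it is a ``straightforward calculation'' is justified. The single error-prone point is the $e_{13}$-coefficient in each row: it is fed only by $\phi$ applied to the leading terms of the product (the $e_{13}$-parts of $\phi\m(a)$ and $\phi\m(b)$ having already dropped out), so one must track the factors $a_{31}$, $a_{32}$ and the powers of $a_{11}$ and $a_{22}$ carefully. Verifying the four displayed formulas this way completes the proof.
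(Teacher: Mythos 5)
Your computation is correct: the inverse formulas for $\phi\m$ are right, the observation that all $*$-products involving $e_{13}$ vanish (which is exactly the omitted-zero-products convention in \cref{(12)-matching-UT_3(K)-general-form}) correctly reduces each $\phi\m(a)*\phi\m(b)$ to a scalar multiple of the corresponding product of $e_{12}$ and $e_{23}$, and all four resulting expressions match the statement. The paper deliberately omits the proof as a ``straightforward calculation,'' and your argument is precisely that calculation, so it takes essentially the same (and only natural) approach.
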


\begin{prop}\label{(12)-matching-UT_3-full-descr}
	Any $(12)$-matching structure on $UT_3(K)$ is isomorphic to exactly one of the following structures:
	\begin{enumerate}
		\item\label{star-for-lb_5-ne-0-(12)} $e_{12}\star e_{12} = e_{12}$, $e_{12} \star e_{23} = e_{23}$;
		\item\label{star-for-lb_5=0-lb_6-ne-0-(12)} $e_{12} \star e_{23} = e_{12}$, $e_{23} \star e_{23} = e_{23}$;
		\item\label{star-for-lb_5=0-lb_6=0-lb_4-ne-0-(12)} $e_{12}\star e_{12} = \af e_{13},\ e_{12}\star e_{23} = \bt e_{13},\ e_{23}\star e_{12} = \gm e_{13},\ e_{23}\star e_{23} = e_{13}$;
		\item\label{star-for-lb_5=0-lb_6=0-lb_4=0-lb_1-ne-0-(12)} $e_{12}\star e_{12} = e_{13},\ e_{12} \star e_{23} = \af e_{13},\ e_{23} \star e_{12} = \bt e_{13}$;
		\item\label{star-for-lb_5=0-lb_6=0-lb_1=0-lb_4=0-(12)} $e_{12}\star e_{23}=\af e_{13}$, $e_{23}\star e_{12}=\bt e_{13}$.
	\end{enumerate}
\end{prop}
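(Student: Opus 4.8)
The plan is to follow exactly the same strategy that was used in \cref{id-matching-UT_3-full-descr}, namely a case analysis driven by the vanishing of the associators in \cref{associators-UT_3-(12)-match}, combined with the change-of-basis formulas in \cref{(12)-matching-UT_3(K)-new-base}. By \cref{structures-M^(12)1_ij-n=3}, any $(12)$-matching structure $*$ on $UT_3(K)$ has the general form \cref{(12)-matching-UT_3(K)-general-form} with six parameters $\lb_1,\dots,\lb_6$. Associativity is equivalent to the vanishing of all associators listed in \cref{associators-UT_3-(12)-match}; I would use these relations to cut down the parameter space, and then apply a well-chosen automorphism \cref{aut-UT_3-on-e_12_e_23_e_13} to normalize the surviving parameters to $0$ or $1$.

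First I would stratify by the values of $\lb_5$ and $\lb_6$, since these are the coefficients of the ``non-annihilator'' parts $e_{12}$ and $e_{23}$ and hence govern the isomorphism type most strongly. \textit{Case 1:} $\lb_5\ne 0$. The associator \cref{[e_12_e_12_e_23]-(12)} forces $\lb_5\lb_6=0$, hence $\lb_6=0$, and \cref{[e_12_e_23_e_12]-(12)} then gives $\lb_3\lb_5=0$, so $\lb_3=0$; the remaining associators should kill $\lb_4$ as well. Using \cref{(12)-matching-UT_3(K)-new-base} with a suitable choice of $a_{11},a_{22},a_{31},a_{32}$ I would scale $\lb_5$ to $1$ and absorb $\lb_1,\lb_2$, arriving at structure \cref{star-for-lb_5-ne-0-(12)}. \textit{Case 2:} $\lb_5=0$, $\lb_6\ne 0$. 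By \cref{[e_23_e_23_e_12]-(12)} we get $\lb_3\lb_6=0$, so $\lb_3=0$; normalizing $\lb_6$ to $1$ via \cref{(12)-matching-UT_3(K)-new-base} and clearing $\lb_2,\lb_4$ yields \cref{star-for-lb_5=0-lb_6=0-lb_6-ne-0-(12)}, i.e.\ \cref{star-for-lb_5=0-lb_6-ne-0-(12)}. \textit{Case 3:} $\lb_5=\lb_6=0$. Then every associator in \cref{associators-UT_3-(12)-match} vanishes automatically, so $*$ is always associative and all products land in $\spn_K\{e_{13}\}=\Ann(UT_3(K),\cdot)$; these are annihilator-valued structures of the form \cref{annihilator-valued-structure}. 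Here the change of basis reduces to $e_{12}\star e_{12}=a_{11}^{-1}a_{22}\lb_1e_{13}$, $e_{23}\star e_{23}=a_{11}a_{22}^{-1}\lb_4e_{13}$, while $e_{12}\star e_{23}$ and $e_{23}\star e_{12}$ keep their coefficients $\lb_2,\lb_3$. I would subdivide according to whether $\lb_4\ne 0$ (normalize to $1$, giving the family \cref{star-for-lb_5=0-lb_6=0-lb_4-ne-0-(12)}), then $\lb_4=0$ with $\lb_1\ne 0$ (normalize to $1$, giving \cref{star-for-lb_5=0-lb_6=0-lb_4=0-lb_1-ne-0-(12)}), and finally $\lb_1=\lb_4=0$, leaving the two-parameter family \cref{star-for-lb_5=0-lb_6=0-lb_1=0-lb_4=0-(12)}.

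For the converse and the pairwise non-isomorphism claim, I would argue as in the $\id$-case: each listed product is a special instance of \cref{(12)-matching-UT_3(K)-general-form} and is associative by inspection of \cref{associators-UT_3-(12)-match}, hence is a genuine $(12)$-matching structure by \cref{structures-M^(12)1_ij-n=3}. Pairwise non-isomorphism across the five items follows because the invariants $(\lb_5,\lb_6)$ (more precisely, the rank and structure of the ``non-annihilator'' part of $*$) are preserved under the action \cref{(12)-matching-UT_3(K)-new-base}, separating items \cref{star-for-lb_5-ne-0-(12),star-for-lb_5=0-lb_6-ne-0-(12)} from the annihilator-valued cases \cref{star-for-lb_5=0-lb_6=0-lb_4-ne-0-(12),star-for-lb_5=0-lb_6=0-lb_4=0-lb_1-ne-0-(12),star-for-lb_5=0-lb_6=0-lb_1=0-lb_4=0-(12)} and distinguishing \cref{star-for-lb_5-ne-0-(12)} from \cref{star-for-lb_5=0-lb_6-ne-0-(12)} by which product $e_{12}\star e_{12}$ or $e_{23}\star e_{23}$ is nonzero. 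Within the annihilator-valued stratum, the remaining free parameters $\af,\bt,\gm$ are honest moduli that cannot be further normalized, so different parameter values give non-isomorphic structures.

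The main obstacle I anticipate is bookkeeping within \textit{Case 3}, where associativity imposes no constraint at all and the entire classification is carried by the automorphism action on the annihilator-valued coefficients. I must be careful that the scalings used to normalize $\lb_4$ (or $\lb_1$) do not secretly allow one to also eliminate $\lb_2$ and $\lb_3$: the formulas in \cref{(12)-matching-UT_3(K)-new-base} show that when $\lb_5=\lb_6=0$ the coefficients of $e_{12}\star e_{23}$ and $e_{23}\star e_{12}$ transform trivially (they equal $\lb_2$ and $\lb_3$ respectively, independent of the $a_{ij}$), so these really do survive as continuous invariants. Confirming that the two remaining scaling degrees of freedom $a_{11},a_{22}$ are exactly consumed by normalizing one of $\lb_1,\lb_4$—and hence cannot simultaneously be reused—is the delicate point that guarantees the families in \cref{star-for-lb_5=0-lb_6=0-lb_4-ne-0-(12),star-for-lb_5=0-lb_6=0-lb_4=0-lb_1-ne-0-(12),star-for-lb_5=0-lb_6=0-lb_1=0-lb_4=0-(12)} are genuinely pairwise non-isomorphic.
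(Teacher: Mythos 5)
Your proposal follows the paper's proof essentially verbatim: the same stratification by $(\lb_5,\lb_6)$, the same use of the associators in \cref{associators-UT_3-(12)-match} to cut the parameter space, the same normalization via \cref{(12)-matching-UT_3(K)-new-base}, and the same converse/non-isomorphism argument. Your Case 3 analysis is exactly the paper's, including the correct key observation that for $\lb_5=\lb_6=0$ the coefficients $\lb_2,\lb_3$ transform trivially while only the ratio $a_{11}/a_{22}$ acts on $\lb_1,\lb_4$, so the residual parameters $\af,\bt,\gm$ in items \cref{star-for-lb_5=0-lb_6=0-lb_4-ne-0-(12),star-for-lb_5=0-lb_6=0-lb_4=0-lb_1-ne-0-(12),star-for-lb_5=0-lb_6=0-lb_1=0-lb_4=0-(12)} are genuine moduli.

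One deduction is missing in your Case 2, and as written that step would fail: besides $\lb_3=0$ you must also derive $\lb_1=0$, which follows from \cref{[e_12_e_12_e_23]-(12)}, whose $e_{13}$-coefficient reduces to $-\lb_1\lb_6$ once $\lb_5=0$. This cannot be absorbed by the normalization you invoke, because by \cref{(12)-matching-UT_3(K)-new-base} the automorphism only rescales the coefficient of $e_{13}$ in $e_{12}\star e_{12}$ by $a_{22}a_{11}^{-1}$ and never annihilates it, so ``clearing $\lb_2,\lb_4$'' would leave a residual $e_{12}\star e_{12}$ term and you would not reach item \cref{star-for-lb_5=0-lb_6-ne-0-(12)}. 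Similarly, in Case 1 the associator that kills $\lb_4$ (which you left unnamed) is \cref{[e_12_e_23_e_23]-(12)}, whose $e_{13}$-coefficient is $\lb_4\lb_5$. With these two lines filled in, your argument coincides with the paper's proof, down to the choices $a_{11}=\lb_5$, $a_{31}=-\lb_1$, $a_{32}=-\lb_2$ in Case 1 and $a_{22}=\lb_6$, $a_{31}=-\lb_2$, $a_{32}=-\lb_4$ in Case 2.
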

\begin{proof}
	Let $*$ be an $\id$-matching structure on $UT_3(K)$. By \cref{structures-M^(12)1_ij-n=3} the product $*$ is of the form \cref{(12)-matching-UT_3(K)-general-form}. 
	
	\textit{Case 1.} $\lb_5\ne 0$. It follows from \cref{[e_12_e_12_e_23]-(12)} that $\lb_6=0$. Furthermore, by \cref{[e_12_e_23_e_12]-(12)} we have $\lb_3=0$ and by \cref{[e_12_e_23_e_23]-(12)} we have $\lb_4=0$. Let $\phi\in\Aut(UT_3(K))$ given by \cref{aut-UT_3-on-e_12_e_23_e_13}. Then by \cref{(12)-matching-UT_3(K)-new-base} the automorphism $\phi$ defines the following isomorphic structure
	\begin{align*}
		e_{12} \star e_{12} = a_{11}^{-1}\lb_5e_{12} + a_{11}^{-2}(a_{11}a_{22}\lb_1 + a_{31}\lb_5)e_{13},\ 
		e_{12} \star e_{23} = a_{11}^{-1}\lb_5e_{23} + a_{11}^{-1}a_{22}^{-1}(a_{11}a_{22}\lb_2 + a_{32}\lb_5)e_{13}.
	\end{align*}
	Choosing $a_{11}=\lb_5$, $a_{22}=1$, $a_{31} =-\lb_1$ and $a_{32} =-\lb_2$, we get \cref{star-for-lb_5-ne-0-(12)}.

	\textit{Case 2.} $\lb_5=0$ and $\lb_6\ne 0$. Then it follows from \cref{[e_12_e_12_e_23]-(12)} that $\lb_1=0$, and \cref{[e_23_e_23_e_12]-(12)} implies $\lb_3=0$. By \cref{(12)-matching-UT_3(K)-new-base} the automorphism $\phi$ defines the following isomorphic structure
	\begin{align*}
		e_{12} \star e_{23} = a_{22}^{-1} \lb_6 e_{12} + a_{11}^{-1} a_{22}^{-1}(a_{11} a_{22} \lb_2 +  a_{31} \lb_6) e_{13},\
		e_{23} \star e_{23} = a_{22}^{-1} \lb_6 e_{23} + a_{22}^{-2}(a_{11}a_{22} \lb_4 + a_{32} \lb_6) e_{13}.
	\end{align*}
	Choosing $a_{11}=1$, $a_{22}=\lb_6$, $a_{31} = -\lb_2$ and $a_{32} = -\lb_4$, we get \cref{star-for-lb_5=0-lb_6-ne-0-(12)}.
	
	\textit{Case 3.} $\lb_5=\lb_6=0$. Then $*$ is associative. By \cref{(12)-matching-UT_3(K)-new-base} the automorphism $\phi$ defines the following isomorphic structure
	\begin{align*}
		e_{12}\star e_{12}=a_{22}a_{11}^{-1}\lb_1e_{13},\
		e_{12}\star e_{23}=\lb_2e_{13},\
		e_{23}\star e_{12}=\lb_3e_{13},\
		e_{23}\star e_{23}=a_{11}a_{22}^{-1}\lb_4e_{13}.
	\end{align*}
	If $\lb_4\ne 0$, we get the family \cref{star-for-lb_5=0-lb_6=0-lb_4-ne-0-(12)}. If $\lb_4=0$ and $\lb_1\ne 0$, we get the family \cref{star-for-lb_5=0-lb_6=0-lb_4=0-lb_1-ne-0-(12)}. Finally, if $\lb_1=\lb_4=0$, we get the family \cref{star-for-lb_5=0-lb_6=0-lb_1=0-lb_4=0-(12)}.
	
	Conversely, each product of the list \cref{star-for-lb_5-ne-0-(12),star-for-lb_5=0-lb_6-ne-0-(12),star-for-lb_5=0-lb_6=0-lb_4-ne-0-(12),star-for-lb_5=0-lb_6=0-lb_4=0-lb_1-ne-0-(12),star-for-lb_5=0-lb_6=0-lb_1=0-lb_4=0-(12)} is a particular case of \cref{(12)-matching-UT_3(K)-general-form}, and it is associative by \cref{associators-UT_3-(12)-match}, so it is a $(12)$-matching structure on $UT_3(K)$ by \cref{structures-M^(12)1_ij-n=3}. Moreover, it is directly seen by \cref{(12)-matching-UT_3(K)-new-base} that all the listed structures are pairwise non-isomorphic.
\end{proof}

	\section{Interchangeable structures on $UT_n(K)$}\label{sec-interchangeable}

	\subsection{Some general lemmas}

	\begin{lem}\label{a-cdot-b=0=>a*b-in-Ann(A)}
		Let $(A,\cdot)$ be a (not necessarily associative) algebra and $*$ a bilinear product on $A$ interchangeable with $\cdot$. Then for all $a,b\in A$ if $a\cdot b=0$, then $a*b\in\Ann(A,\cdot)$.
	\end{lem}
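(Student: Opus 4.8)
The plan is to unwind the definition of the annihilator and verify the two defining conditions directly. Since $\Ann(A,\cdot)=\{x\in A: x\cdot c=c\cdot x=0\text{ for all }c\in A\}$, to show $a*b\in\Ann(A,\cdot)$ it suffices to establish, for every $c\in A$, that $(a*b)\cdot c=0$ and $c\cdot(a*b)=0$. Each of these will come from one of the two interchangeability relations together with the hypothesis $a\cdot b=0$ and bilinearity of $*$.

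For the right-annihilator condition I would invoke the first interchangeability identity, which in the present notation ($\cdot_1=\cdot$, $\cdot_2=*$) reads $(a\cdot b)*c=(a*b)\cdot c$ for all $a,b,c\in A$. Reading it from right to left and substituting $a\cdot b=0$ gives
\[
	(a*b)\cdot c=(a\cdot b)*c=0*c=0,
\]
where the last equality uses that $*$ is bilinear, so multiplying the zero element gives zero.

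For the left-annihilator condition I would use the second interchangeability identity, $c\cdot(a*b)=c*(a\cdot b)$, which is the relation $x\cdot(y*z)=x*(y\cdot z)$ specialized to $x=c$, $y=a$, $z=b$. Again substituting $a\cdot b=0$ yields
\[
	c\cdot(a*b)=c*(a\cdot b)=c*0=0.
\]
Combining the two computations shows $a*b$ kills every $c\in A$ on both sides, hence $a*b\in\Ann(A,\cdot)$.

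There is no genuine obstacle here: the statement is an immediate consequence of the definition of interchangeability, and the only point worth flagging is keeping track of which of the two identities supplies the right versus the left annihilation, together with the trivial (but necessary) remark that bilinearity forces $0*c=c*0=0$.
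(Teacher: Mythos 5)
Your proof is correct and is essentially identical to the paper's: both verify $(a*b)\cdot c=(a\cdot b)*c=0$ and $c\cdot(a*b)=c*(a\cdot b)=0$ directly from the two interchangeability identities together with bilinearity. The paper simply compresses this into one line.
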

	\begin{proof}
		Let $a\cdot b=0$. Then for all $c\in A$ we have $(a*b)\cdot c=(a\cdot b)*c=0$ and $c\cdot (a*b)=c*(a\cdot b)=0$.
	\end{proof}
	
	\begin{lem}\label{(a*b)-cdot-(c*d)-several-forms}
		Let $(A,\cdot)$ be an associative algebra and $*$ a bilinear product on $A$ interchangeable with $\cdot$. Then
		\begin{align*}
			(a*b)\cdot (c\cdot d)=(a\cdot b)\cdot (c*d)=(a\cdot b\cdot c)*d=a*(b\cdot c\cdot d)=(a\cdot b)*(c\cdot d)
		\end{align*}
		for all $a,b,c,d\in A$.
	\end{lem}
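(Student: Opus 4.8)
The goal is to prove the chain of five equalities in \cref{(a*b)-cdot-(c*d)-several-forms} for all $a,b,c,d\in A$. The plan is to establish the equalities one link at a time, using associativity of $\cdot$ together with the two interchangeability identities $(x\cdot_1 y)\cdot_2 z=(x\cdot_2 y)\cdot_1 z$ and $x\cdot_1(y\cdot_2 z)=x\cdot_2(y\cdot_1 z)$ (here with $\cdot_1=\cdot$ and $\cdot_2=*$). First I would observe that the two outer terms involving $*$ as the \emph{first} operation on a product, namely $(a*b)\cdot(c\cdot d)$ and $(a\cdot b)\cdot(c*d)$, should both collapse to a single bracketing once the $*$ is moved by interchangeability. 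The point is that $(a*b)\cdot(c\cdot d)=(a*b)\cdot c\cdot d$ by associativity of $\cdot$, and then the first interchangeability identity applied to the leftmost pair $a*b$ turns $(a*b)\cdot c$ into $(a\cdot b)*c$, while the second identity handles the symmetric situation on the right.

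Concretely, I would proceed as follows. Start from $(a\cdot b\cdot c)*d$; by associativity this is $((a\cdot b)\cdot c)*d$, and applying the first interchangeability identity (with the triple $(a\cdot b),c,d$, swapping $\cdot$ and $*$ in the outermost operation) gives $((a\cdot b)*c)\cdot d=(a\cdot b)*c$ composed with $\cdot d$. A second application, now pulling the $*$ onto the pair $a\cdot b$, should let me rewrite $(a\cdot b)*c$ in terms of $(a*b)\cdot c$, yielding $(a*b)\cdot c\cdot d=(a*b)\cdot(c\cdot d)$ after reassociating. The symmetric computation, starting from $a*(b\cdot c\cdot d)$ and peeling the product off on the right via the second interchangeability identity, gives $(a\cdot b)\cdot(c*d)$. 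Finally, the middle term $(a\cdot b)*(c\cdot d)$ is reached by applying interchangeability to $(a\cdot b\cdot c)*d$ one more time, grouping $a\cdot b$ and $c\cdot d$ as the two factors of the $*$-product; alternatively it equals $(a*b)\cdot(c\cdot d)$ directly once $*$ and $\cdot$ are interchanged in the outer slot. Each step is a single substitution, so I would simply present the equalities as a display chain.

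The main thing to be careful about—rather than a genuine obstacle—is bookkeeping: at each step one must verify that the interchangeability identity is being applied to a \emph{product} (one of whose factors is itself a $\cdot$-product, so that the hypothesis applies), and that the associativity of $\cdot$ is invoked to regroup freely. Because interchangeability only lets me swap $\cdot$ and $*$ when the brackets are fixed, I must keep the bracketing explicit and change it only through associativity of the \emph{associative} product $\cdot$. I expect the whole argument to amount to a four- or five-line display of the form
\begin{align*}
	(a\cdot b\cdot c)*d&=((a\cdot b)*c)\cdot d=((a*b)\cdot c)\cdot d=(a*b)\cdot(c\cdot d),\\
	a*(b\cdot c\cdot d)&=a\cdot(b*(c\cdot d))=a\cdot((b\cdot c)*d)=\dots=(a\cdot b)\cdot(c*d),
\end{align*}
together with the identification of the common middle value $(a\cdot b)*(c\cdot d)$, each equality justified by either associativity of $\cdot$ or one application of interchangeability. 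No step requires more than a direct substitution, so there is no real difficulty beyond getting the order of the swaps right.
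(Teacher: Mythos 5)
Your proposal is correct and follows essentially the same argument as the paper: the paper proves the lemma by two short chains emanating from the middle term $(a\cdot b)*(c\cdot d)$, namely $(a\cdot b)*(c\cdot d)=(a*b)\cdot(c\cdot d)=((a*b)\cdot c)\cdot d=((a\cdot b)*c)\cdot d=((a\cdot b)\cdot c)*d$ and $(a\cdot b)*(c\cdot d)=(a\cdot b)\cdot(c*d)=a\cdot(b\cdot(c*d))=a\cdot(b*(c\cdot d))=a*(b\cdot(c\cdot d))$, which are exactly your two chains traversed from the opposite endpoints. One phrasing is loose---you cannot pass from $(a\cdot b\cdot c)*d$ to $(a\cdot b)*(c\cdot d)$ by ``one more'' application of interchangeability, since the identities never regroup brackets---but this is harmless because your stated alternative, $(a\cdot b)*(c\cdot d)=(a*b)\cdot(c\cdot d)$ by a single application of the first identity, correctly closes the chain.
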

	\begin{proof}
		For all $a,b,c,d\in A$ we have 
		\begin{align*}
			(a\cdot b)*(c\cdot d)&=(a*b)\cdot (c\cdot d)=((a*b)\cdot c)\cdot d=((a\cdot b)*c)\cdot d=((a\cdot b)\cdot c)*d,\\
			(a\cdot b)*(c\cdot d)&=(a\cdot b)\cdot (c*d)=a\cdot(b\cdot (c*d))=a\cdot(b*(c\cdot d))=a*(b\cdot (c\cdot d)).\qedhere
		\end{align*}
	\end{proof}
	
	\subsection{Some classes of products on $UT_n(K)$ that are interchangeable with $\cdot$}\label{sec-interchangeable-classes}
	
	We begin by observing that $\M^{\id,4}_{i,j}$ from \cref{structures-M^id1_ij} is interchangeable with $\cdot$ for all $1<i\le n$ and $1\le j<n$ by \cref{M^id4_ij-totally-comp}.
	
	
	\begin{lem}\label{e_12*e_jj+1=e_1n-and-e_jj+1*e_j+1j+2=e_2n}
		For each fixed $1\le j\le n-2$ the bilinear product
		\begin{align*}
			e_{12}*e_{j,j+2}=e_{1n},\ e_{j,j+1}*e_{j+1,j+2}=e_{2n}
		\end{align*}
		is interchangeable with $\cdot$.
	\end{lem}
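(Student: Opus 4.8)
The plan is to verify the two defining identities of interchangeability, namely $(a\cdot b)*c=(a*b)\cdot c$ and $a\cdot(b*c)=a*(b\cdot c)$, directly on matrix units $e_{xy}$, the general case following by bilinearity. Write $*$ for the product in question, so that $e_{xy}*e_{zw}$ equals $e_{1n}$ when $(e_{xy},e_{zw})=(e_{12},e_{j,j+2})$, equals $e_{2n}$ when $(e_{xy},e_{zw})=(e_{j,j+1},e_{j+1,j+2})$, and is $0$ otherwise. Throughout I would lean on two structural facts: every product $e_{xy}\cdot e_{zw}$ of matrix units lies in $UT_n(K)^2=\spn_K\{e_{pq}:q>p+1\}$, hence has column minus row index at least $2$; and the image of $*$ is contained in $\spn_K\{e_{1n},e_{2n}\}$, whose elements have column index $n$ and therefore right-annihilate $UT_n(K)$.

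First I would dispatch $(a\cdot b)*c=(a*b)\cdot c$ by showing that both sides vanish identically. For the left-hand side, $a\cdot b$ lies in $UT_n(K)^2$ and so is never one of the width-one generators $e_{12}$ or $e_{j,j+1}$, which are the only left arguments on which $*$ is nonzero; thus $(a\cdot b)*c=0$. For the right-hand side, $a*b\in\spn_K\{e_{1n},e_{2n}\}$ is a column-$n$ element, so $(a*b)\cdot c=0$. Hence this identity holds trivially, $0=0$.

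The remaining identity $a\cdot(b*c)=a*(b\cdot c)$ is where the single nonzero contribution surfaces, and checking it is the crux. On the left, $b*c\in\spn_K\{e_{1n},e_{2n}\}$; left multiplication kills $e_{1n}$ since no matrix unit has column index $1$, while $a\cdot e_{2n}\ne 0$ forces $a=e_{12}$, and $b*c=e_{2n}$ forces $(b,c)=(e_{j,j+1},e_{j+1,j+2})$; so the left-hand side is nonzero only for $(a,b,c)=(e_{12},e_{j,j+1},e_{j+1,j+2})$, where it equals $e_{12}\cdot e_{2n}=e_{1n}$. On the right, $a*(b\cdot c)\ne 0$ forces $a\in\{e_{12},e_{j,j+1}\}$; the case $a=e_{j,j+1}$ would require $b\cdot c=e_{j+1,j+2}$, impossible because a width-one unit is not a product of two matrix units, whereas $a=e_{12}$ requires $b\cdot c=e_{j,j+2}$, whose only factorization into matrix units is $e_{j,j+1}\cdot e_{j+1,j+2}$; so the right-hand side is nonzero only for the same triple, where it equals $e_{12}*e_{j,j+2}=e_{1n}$. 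The two sides agree, completing the verification.

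I expect no serious obstacle: the argument is a finite inspection of matrix-unit products. The only point demanding care is the bookkeeping of factorizations, namely that the width-one units $e_{12}$, $e_{j,j+1}$, $e_{j+1,j+2}$ admit no nontrivial factorization in $UT_n(K)$ while $e_{j,j+2}$ factors uniquely as $e_{j,j+1}\cdot e_{j+1,j+2}$, since this is exactly what collapses the first identity to $0=0$ and isolates the unique surviving triple in the second.
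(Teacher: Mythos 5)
Your proof is correct and follows essentially the same route as the paper: both arguments kill $(a\cdot b)*c$ because $e_{12}$ and $e_{j,j+1}$ lie outside $UT_n(K)^2$, kill $(a*b)\cdot c$ because $e_{1n}$ and $e_{2n}$ annihilate $UT_n(K)$ on the right, and then reduce the identity $a\cdot(b*c)=a*(b\cdot c)$ to the single surviving triple $(e_{12},e_{j,j+1},e_{j+1,j+2})$, on which both sides equal $e_{1n}$. Your explicit bookkeeping of factorizations of matrix units (width-one units are irreducible, $e_{j,j+2}$ factors uniquely) just makes precise the uniqueness claim the paper states in its displayed equivalence, so the two proofs coincide in substance.
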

	\begin{proof}
		Since $e_{12}, e_{j,j+1}\in\spn_K\{e_{i,i+1}: 1\le i<n\}$, which is a complement of $UT_n(K)^2$, then $(a\cdot b)*c=0$ for all $a,b,c\in UT_n(K)$. Furthermore, $e_{1n}\cdot UT_n(K)=e_{2n}\cdot UT_n(K)=\{0\}$, whence $(a*b)\cdot c=0$ for all $a,b,c\in UT_n(K)$.
		
		Now, 
		\begin{align*}
			e_{xy}*(e_{zw}\cdot e_{uv})\ne 0\iff e_{xy}\cdot(e_{zw}*e_{uv})\ne 0&\iff (x,y)=(1,2),\ (z,w)=(j,j+1)
			\text{ and }(u,v)=(j+1,j+2)
		\end{align*}
		for some $1\le j\le n-2$, in which case 
		\begin{align*}
			e_{xy}*(e_{zw}\cdot e_{uv})&=e_{12}*(e_{j,j+1}\cdot e_{j+1,j+2})=e_{12}*e_{j,j+2}=e_{1n},\\
			e_{xy}\cdot(e_{zw}*e_{uv})&=e_{12}\cdot(e_{j,j+1}*e_{j+1,j+2})=e_{12}\cdot e_{2n}=e_{1n}.
		\end{align*}
		Thus, $a*(b\cdot c)=a\cdot(b*c)$ for all $a,b,c\in UT_n(K)$.
	\end{proof}
	
	\begin{cor}
		For each fixed $1\le j\le n-2$ the bilinear product
		\begin{align*}
			e_{j,j+2}*e_{n-1,n}=e_{1n},\ e_{j,j+1}*e_{j+1,j+2}=e_{1,n-1}
		\end{align*}
		is interchangeable with $\cdot$.
	\end{cor}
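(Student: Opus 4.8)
The plan is to obtain this corollary directly from \cref{e_12*e_jj+1=e_1n-and-e_jj+1*e_j+1j+2=e_2n} by transporting the product along the involution $\phi$ of \cref{vf(e_kl)=e_(n-l+1_n-k+1)}, using the construction of \cref{from-*-to-star}. Concretely, for a fixed $1\le j_0\le n-2$ let $*$ be the product from \cref{e_12*e_jj+1=e_1n-and-e_jj+1*e_j+1j+2=e_2n}, which is interchangeable with $\cdot$, and let $\star$ be the product built from it via \cref{a-star-b=phi(phi-inv(b)*phi-inv(a))}. Since $\phi$ is an involution, $\phi\m=\phi$, so $a\star b=\phi(\phi(b)*\phi(a))$ for all $a,b\in UT_n(K)$, and it remains only to identify $\star$ explicitly on the basis.

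First I would compute $e_{ab}\star e_{cd}=\phi(\phi(e_{cd})*\phi(e_{ab}))$, which is nonzero precisely when $\phi(e_{cd})*\phi(e_{ab})$ is one of the two defining products of $*$. Using $\phi(e_{ab})=e_{n-b+1,n-a+1}$ and solving the resulting index equations, the relation $e_{12}*e_{j_0,j_0+2}=e_{1n}$ forces $e_{cd}=e_{n-1,n}$ and $e_{ab}=e_{n-j_0-1,n-j_0+1}$, giving $e_{n-j_0-1,n-j_0+1}\star e_{n-1,n}=\phi(e_{1n})=e_{1n}$; likewise the relation $e_{j_0,j_0+1}*e_{j_0+1,j_0+2}=e_{2n}$ forces $e_{cd}=e_{n-j_0,n-j_0+1}$ and $e_{ab}=e_{n-j_0-1,n-j_0}$, giving $e_{n-j_0-1,n-j_0}\star e_{n-j_0,n-j_0+1}=\phi(e_{2n})=e_{1,n-1}$. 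Putting $j=n-j_0-1$, which again ranges over $1,\dots,n-2$ as $j_0$ does, these two relations become exactly $e_{j,j+2}\star e_{n-1,n}=e_{1n}$ and $e_{j,j+1}\star e_{j+1,j+2}=e_{1,n-1}$, i.e.\ the product in the statement.

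Finally, since $*$ is interchangeable with $\cdot$ by \cref{e_12*e_jj+1=e_1n-and-e_jj+1*e_j+1j+2=e_2n}, the interchangeability part of \cref{from-*-to-star} guarantees that $\star$ is interchangeable with $\cdot$ as well, which is the assertion of the corollary.

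The only genuine work is the index bookkeeping under $\phi$: one must confirm both that the transported products coincide \emph{exactly} with those stated and that the reparametrisation $j=n-j_0-1$ sweeps out the full range $1\le j\le n-2$. This is routine once one records the two fixed images $\phi(e_{1n})=e_{1n}$ and $\phi(e_{2n})=e_{1,n-1}$, so I do not anticipate any substantial obstacle.
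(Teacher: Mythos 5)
Your proposal is correct and takes essentially the same route as the paper, whose entire proof is the one-liner ``apply \cref{e_12*e_jj+1=e_1n-and-e_jj+1*e_j+1j+2=e_2n} to the product $\star$ from \cref{from-*-to-star}''; your index bookkeeping under $\phi$ (including $\phi(e_{1n})=e_{1n}$, $\phi(e_{2n})=e_{1,n-1}$, and the reparametrisation $j=n-j_0-1$) just makes explicit what the paper leaves implicit, and it checks out.
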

	\begin{proof}
		This follows by applying \cref{e_12*e_jj+1=e_1n-and-e_jj+1*e_j+1j+2=e_2n} to the product $\star$ from \cref{from-*-to-star}.
	\end{proof}
	
	\subsection{The decomposition of an interchangeable structure on $UT_n(K)$}
	
	Let $*$ be a (not necessarily associative) bilinear product on $UT_n(K)$ that is interchangeable with $\cdot$.
	
\subsubsection{The product $e_{ij}*e_{kl}$ with $j\ne k$}\label{sec-e_j*e_kl-inter}

\begin{lem}\label{e_ik*e_kl-in-spn-e_1n}
	For all $1\le i<j\le n$ and $1\le k<l\le n$ if $j\ne k$, then $e_{ij}*e_{kl}\in\spn_K\{e_{1n}\}$.
\end{lem}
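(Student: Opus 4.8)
The plan is to deduce this immediately from \cref{a-cdot-b=0=>a*b-in-Ann(A)}, which is the whole point of having isolated that lemma. First I would observe that the hypothesis $j\ne k$ forces the original product to vanish: since $e_{ij}\cdot e_{kl}=\dl_{jk}e_{il}$ and $j\ne k$, we have $e_{ij}\cdot e_{kl}=0$. By \cref{a-cdot-b=0=>a*b-in-Ann(A)} (applied to $a=e_{ij}$, $b=e_{kl}$ in the algebra $A=UT_n(K)$) this gives $e_{ij}*e_{kl}\in\Ann(UT_n(K),\cdot)$. Finally, invoking the computation $\Ann(UT_n(K))=\spn_K\{e_{1n}\}$ recorded in the preliminaries, I conclude $e_{ij}*e_{kl}\in\spn_K\{e_{1n}\}$, as claimed.

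So in fact there is no real obstacle here: the entire content has been pushed into \cref{a-cdot-b=0=>a*b-in-Ann(A)}, and the statement follows in one line. It is worth contrasting this with the $(12)$-matching analogue \cref{e_ij*e_kl=0-for-j-ne-k-and-j-i>1-or-l-k>1}, where the conclusion was the stronger $e_{ij}*e_{kl}=0$ but only under the extra hypotheses $j>i+1$ or $l>k+1$, proved by a genuine associativity-type manipulation. The interchangeable setting is cleaner precisely because the interchangeability relations force any $*$-product of a $\cdot$-orthogonal pair into the annihilator, which is one-dimensional for $UT_n(K)$; this yields the weaker containment in $\spn_K\{e_{1n}\}$ uniformly, with no case analysis on the indices $i$ and $l$ needed.
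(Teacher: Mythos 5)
Your proof is correct and coincides with the paper's own argument: both deduce the claim immediately from \cref{a-cdot-b=0=>a*b-in-Ann(A)}, using that $e_{ij}\cdot e_{kl}=0$ when $j\ne k$ and that $\Ann(UT_n(K),\cdot)=\spn_K\{e_{1n}\}$ as recorded in the preliminaries. No gaps; the surrounding commentary contrasting this with the $(12)$-matching case is accurate but not needed for the proof itself.
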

\begin{proof}
	This is a consequence of \cref{a-cdot-b=0=>a*b-in-Ann(A)}, because $e_{ij}\cdot e_{kl}=0$ for $j\ne k$ and $\Ann(UT_n(K),\cdot)=\spn_K\{e_{1n}\}$.
\end{proof}

\begin{lem}\label{e_ij*e_kl=0-for-j-i+l-k>3}
	For all $1\le i<j\le n$ and $1\le k<l\le n$ if $j\ne k$ and $j-i+l-k>3$, then $e_{ij}*e_{kl}=0$.
\end{lem}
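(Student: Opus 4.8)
The plan is to reduce the statement to the vanishing of an ordinary $\cdot$-product $e_{pq}\cdot e_{rs}$ with $q\ne r$, exploiting the chain of equalities collected in \cref{(a*b)-cdot-(c*d)-several-forms}. Since $i<j$, $k<l$ and $j-i+l-k>3$, the widths $j-i$ and $l-k$ are positive integers with $j-i+l-k\ge 4$; in particular at least one of them is $\ge 2$. By \cref{e_ik*e_kl-in-spn-e_1n} we already know $e_{ij}*e_{kl}\in\spn_K\{e_{1n}\}$, but I will in fact show the product is $0$ directly, without invoking that membership and without using associativity of $*$.

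First I would treat the balanced case $j-i\ge 2$ and $l-k\ge 2$. Here both factors split, $e_{ij}=e_{i,i+1}\cdot e_{i+1,j}$ and $e_{kl}=e_{k,l-1}\cdot e_{l-1,l}$, so taking $a=e_{i,i+1}$, $b=e_{i+1,j}$, $c=e_{k,l-1}$, $d=e_{l-1,l}$ and using the equality $(a\cdot b)*(c\cdot d)=(a\cdot b\cdot c)*d$ of \cref{(a*b)-cdot-(c*d)-several-forms} gives $e_{ij}*e_{kl}=(e_{ij}\cdot e_{k,l-1})*e_{l-1,l}$. As $j\ne k$ we have $e_{ij}\cdot e_{k,l-1}=0$, hence the whole product vanishes.

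It remains to handle the unbalanced cases in which one of the two widths equals $1$; this is the only place that needs care, since a generator cannot be factored and one must instead split the opposite factor (of width $\ge 3$) into three pieces and select the correct asymmetric form of \cref{(a*b)-cdot-(c*d)-several-forms}. If $l-k=1$, then $j-i\ge 3$, so I would write $e_{ij}=a\cdot b\cdot c$ with $a=e_{i,i+1}$, $b=e_{i+1,i+2}$, $c=e_{i+2,j}$ and set $d=e_{kl}$; the equality $(a\cdot b\cdot c)*d=(a*b)\cdot(c\cdot d)$ then reduces the product to $(a*b)\cdot(e_{i+2,j}\cdot e_{kl})$, which is $0$ because $e_{i+2,j}\cdot e_{kl}=0$ (again as $j\ne k$). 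Symmetrically, if $j-i=1$, then $l-k\ge 3$, and writing $e_{kl}=b\cdot c\cdot d$ with $a=e_{ij}$, $b=e_{k,k+1}$, $c=e_{k+1,k+2}$, $d=e_{k+2,l}$ and using $a*(b\cdot c\cdot d)=(a\cdot b)\cdot(c*d)$ reduces to $(e_{ij}\cdot e_{k,k+1})\cdot(c*d)=0$, since $e_{ij}\cdot e_{k,k+1}=0$ as $j=i+1\ne k$. These three cases exhaust all pairs of widths with $j-i+l-k\ge 4$.

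The only genuine idea is thus to arrange, in each case, a factorization in which the mismatch $j\ne k$ forces an inner $\cdot$-product of two matrix units to collapse; I expect the main (purely bookkeeping) obstacle to be covering the width-one subcases, where one must split the opposite factor into three and pick the right one among the five equal expressions of \cref{(a*b)-cdot-(c*d)-several-forms}.
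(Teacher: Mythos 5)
Your proof is correct and takes essentially the same route as the paper's: both arguments rest entirely on the chain of identities in \cref{(a*b)-cdot-(c*d)-several-forms}, rearranging a factorization of $e_{ij}*e_{kl}$ into generators so that the mismatch $j\ne k$ annihilates an inner $\cdot$-product of matrix units, and neither uses associativity of $*$. The only difference is bookkeeping: the paper splits into the cases $j-i>2$, $l-k>2$, and $j-i=l-k=2$, whereas you split into ``both widths $\ge 2$'' versus ``one width equal to $1$'', picking correspondingly different members of the five equal expressions.
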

\begin{proof}
	The proof is based on \cref{(a*b)-cdot-(c*d)-several-forms}.
	
	\textit{Case 1.} $j-i>2$. Then $i<i+1<i+2<j$, so that 
	\begin{align*}
		e_{ij}*e_{kl}=(e_{i,i+1}\cdot e_{i+1,i+2}\cdot e_{i+2,j})*e_{kl}=(e_{i,i+1}*e_{i+1,i+2})\cdot (e_{i+2,j}\cdot e_{kl})
		=(e_{i,i+1}*e_{i+1,i+2})\cdot 0=0.
	\end{align*}
	
	\textit{Case 2.} $l-k>2$. Then $k<k+1<k+2<l$, so that
	\begin{align*}
		e_{ij}*e_{kl}=e_{ij}*(e_{k,k+1}\cdot e_{k+1,k+2}\cdot e_{k+2,l})=(e_{ij}\cdot e_{k,k+1})\cdot (e_{k+1,k+2}*e_{k+2,l})
		=0\cdot (e_{k+1,k+2}*e_{k+2,l})=0.
	\end{align*}
	
	\textit{Case 3.} $j-i=l-k=2$. Then $i<i+1<i+2=j$ and $k<k+1<k+2=l$, so that
	\begin{align*}
		e_{ij}*e_{kl}&=(e_{i,i+1}\cdot e_{i+1,j})*(e_{k,k+1}\cdot e_{k+1,l})=e_{i,i+1}*(e_{i+1,j}\cdot e_{k,k+1}\cdot e_{k+1,l})
		=e_{i,i+1}*0=0.\qedhere
	\end{align*}
\end{proof}

\begin{lem}\label{e_ii+1*e_jj+2=0}
	Let $1<i\le n-1$ and $1\le j\le n-2$ with $j\ne i+1$. Then $e_{i,i+1}*e_{j,j+2}=0$. 
\end{lem}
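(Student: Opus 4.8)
The plan is to pin down $e_{i,i+1}*e_{j,j+2}$ as a scalar multiple of $e_{1n}$ and then force that scalar to vanish via a second, incompatible constraint coming from interchangeability. Note first that the hypothesis $j\ne i+1$ is exactly the condition ``$j\ne k$'' needed to apply \cref{e_ik*e_kl-in-spn-e_1n} to the pair $(e_{i,i+1},e_{j,j+2})$ (here the inner indices are $i+1$ and $j$), which gives $e_{i,i+1}*e_{j,j+2}\in\spn_K\{e_{1n}\}$. I would emphasize that the cruder vanishing lemma \cref{e_ij*e_kl=0-for-j-i+l-k>3} does \emph{not} settle the matter here, since $(i+1-i)+((j+2)-j)=3$ lands exactly on the excluded boundary $j-i+l-k>3$. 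So a genuinely separate argument is required, and recognizing that this boundary case needs its own treatment is the only real obstacle.

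For the second constraint the idea is to factor the right-hand argument as $e_{j,j+2}=e_{j,j+1}\cdot e_{j+1,j+2}$ (legitimate since $1\le j$ and $j+2\le n$) and then transport $*$ across $\cdot$ using interchangeability. Concretely, the identity $a*(b\cdot c)=a\cdot(b*c)$ yields
\[
	e_{i,i+1}*e_{j,j+2}=e_{i,i+1}*(e_{j,j+1}\cdot e_{j+1,j+2})=e_{i,i+1}\cdot(e_{j,j+1}*e_{j+1,j+2}).
\]
Whatever the element $e_{j,j+1}*e_{j+1,j+2}$ happens to be, left-multiplying it by $e_{i,i+1}$ lands in row $i$, that is, in $\spn_K\{e_{i,m}:i<m\le n\}$, by the standard formula $e_{i,i+1}\cdot a=\sum_m a(i+1,m)e_{i,m}$ recorded in \cref{sec-prelim}. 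The key point is that I never need to evaluate $e_{j,j+1}*e_{j+1,j+2}$; only its membership in $UT_n(K)$ matters.

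Finally I would intersect the two memberships. Since $1<i$, the row-$i$ span does not contain $e_{1n}$, so
\[
	e_{i,i+1}*e_{j,j+2}\in\spn_K\{e_{1n}\}\cap\spn_K\{e_{i,m}:i<m\le n\}=\{0\},
\]
which is the assertion. The hypothesis $1<i$ is used precisely at this last step: for $i=1$ the element $e_{1n}$ would itself lie in row $i$ and the intersection argument would collapse, so the statement genuinely requires $i\ge 2$.
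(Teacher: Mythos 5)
Your proof is correct and takes essentially the same route as the paper: both apply \cref{e_ik*e_kl-in-spn-e_1n} to place $e_{i,i+1}*e_{j,j+2}$ in $\spn_K\{e_{1n}\}$, then factor $e_{j,j+2}=e_{j,j+1}\cdot e_{j+1,j+2}$ and use interchangeability to get $e_{i,i+1}*e_{j,j+2}=e_{i,i+1}\cdot(e_{j,j+1}*e_{j+1,j+2})$. The paper concludes by noting the $(1,n)$-entry vanishes because $i>1$, which is exactly your row-$i$ intersection argument in entrywise form.
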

\begin{proof}
	We have 
	\begin{align*}
		e_{i,i+1}*e_{j,j+2}=e_{i,i+1}*(e_{j,j+1}\cdot e_{j+1,j+2})=e_{i,i+1}\cdot(e_{j,j+1}*e_{j+1,j+2}),
	\end{align*}
	whence $(e_{i,i+1}*e_{j,j+2})(1,n)=0$ because $i>1$. Since $e_{i,i+1}*e_{j,j+2}\in\spn_K\{e_{1n}\}$ by \cref{e_ik*e_kl-in-spn-e_1n}, we are done.
\end{proof}

\begin{cor}
	Let $1\le i\le n-2$ and $1\le j<n-1$ with $j\ne i+2$. Then $e_{i,i+2}*e_{j,j+1}=0$. 
\end{cor}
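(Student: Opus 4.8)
The plan is to obtain this corollary as the involution-dual of \cref{e_ii+1*e_jj+2=0}, exactly as in the earlier pairs of statements throughout \cref{sec-interchangeable}. Since \cref{e_ii+1*e_jj+2=0} was proved for an arbitrary product interchangeable with $\cdot$, and since \cref{from-*-to-star} guarantees that the product $\star$ defined by \cref{a-star-b=phi(phi-inv(b)*phi-inv(a))} is again interchangeable with $\cdot$, I would apply that lemma to $\star$ rather than to $*$ and then transport the conclusion back through $\phi$.

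First I would fix the index translation. Setting $i'=n-j$ and $j'=n-i-1$, the hypotheses $1\le i\le n-2$, $1\le j<n-1$ and $j\ne i+2$ turn precisely into $1<i'\le n-1$, $1\le j'\le n-2$ and $j'\ne i'+1$, which are exactly the hypotheses of \cref{e_ii+1*e_jj+2=0}. Applying that lemma to the product $\star$ therefore yields $e_{i',i'+1}\star e_{j',j'+2}=0$.

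Next I would unwind the definition of $\star$. By \cref{a-star-b=phi(phi-inv(b)*phi-inv(a))}, together with the fact that $\phi$ is an involution (so $\phi\m=\phi$), we have $e_{i',i'+1}\star e_{j',j'+2}=\phi(\phi(e_{j',j'+2})*\phi(e_{i',i'+1}))$. Using the formula \cref{vf(e_kl)=e_(n-l+1_n-k+1)} one computes $\phi(e_{j',j'+2})=e_{i,i+2}$ and $\phi(e_{i',i'+1})=e_{j,j+1}$ for the chosen indices, so the right-hand side equals $\phi(e_{i,i+2}*e_{j,j+1})$. Hence $\phi(e_{i,i+2}*e_{j,j+1})=0$, and since $\phi$ is bijective the desired equality $e_{i,i+2}*e_{j,j+1}=0$ follows.

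There is no genuine obstacle here; the only point requiring care is the index bookkeeping, namely verifying that the substitution $i'=n-j$, $j'=n-i-1$ matches the three hypotheses on the nose and that $\phi$ sends the two factors to $e_{i,i+2}$ and $e_{j,j+1}$ (rather than to some transposed pair, which is ruled out by a direct check with \cref{vf(e_kl)=e_(n-l+1_n-k+1)}). Everything else is a routine application of the already-established machinery.
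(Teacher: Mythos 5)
Your proof is correct and is exactly the paper's argument: the paper proves this corollary by applying \cref{e_ii+1*e_jj+2=0} to the product $\star$ from \cref{from-*-to-star}, which is precisely your route. Your index bookkeeping ($i'=n-j$, $j'=n-i-1$) and the computation $\phi(e_{j',j'+2})=e_{i,i+2}$, $\phi(e_{i',i'+1})=e_{j,j+1}$ check out, merely making explicit what the paper leaves to the reader.
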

\begin{proof}
	This follows by applying \cref{e_ii+1*e_jj+2=0} to the product $\star$ from \cref{from-*-to-star}.
\end{proof}

\begin{lem}
	For all $1\le j\le n-2$ we have 
	\begin{align*}
		(e_{12}*e_{j,j+2})(1,n)&=(e_{j,j+1}*e_{j+1,j+2})(2,n),\\
		(e_{j,j+2}*e_{n-1,n})(1,n)&=(e_{j,j+1}*e_{j+1,j+2})(1,n-1).
	\end{align*}
\end{lem}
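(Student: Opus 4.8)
The plan is to reduce each identity to a single application of an interchangeability relation together with the evaluation formulas for products with matrix units. The crucial observation is that the ``length-two'' unit factors as $e_{j,j+2}=e_{j,j+1}\cdot e_{j+1,j+2}$, which is valid for every $1\le j\le n-2$, so that the products appearing on the left of each identity can be rewritten in terms of the product $e_{j,j+1}*e_{j+1,j+2}$ appearing on the right.

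For the first identity I would start from $e_{12}*e_{j,j+2}=e_{12}*(e_{j,j+1}\cdot e_{j+1,j+2})$ and apply the interchangeability relation $a*(b\cdot c)=a\cdot(b*c)$ with $a=e_{12}$, $b=e_{j,j+1}$, $c=e_{j+1,j+2}$, which yields $e_{12}*e_{j,j+2}=e_{12}\cdot(e_{j,j+1}*e_{j+1,j+2})$. Evaluating both sides at $(1,n)$ and using the formula $e_{12}\cdot x=\sum_{t=2}^n x(2,t)e_{1t}$, so that $(e_{12}\cdot x)(1,n)=x(2,n)$, gives exactly $(e_{12}*e_{j,j+2})(1,n)=(e_{j,j+1}*e_{j+1,j+2})(2,n)$.

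For the second identity I would argue symmetrically, writing $e_{j,j+2}*e_{n-1,n}=(e_{j,j+1}\cdot e_{j+1,j+2})*e_{n-1,n}$ and applying the interchangeability relation $(a\cdot b)*c=(a*b)\cdot c$ with $a=e_{j,j+1}$, $b=e_{j+1,j+2}$, $c=e_{n-1,n}$. This produces $e_{j,j+2}*e_{n-1,n}=(e_{j,j+1}*e_{j+1,j+2})\cdot e_{n-1,n}$, and evaluating at $(1,n)$ via $x\cdot e_{n-1,n}=\sum_{i=1}^{n-1}x(i,n-1)e_{in}$, so that $(x\cdot e_{n-1,n})(1,n)=x(1,n-1)$, finishes the argument. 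Alternatively, the second identity follows from the first by applying it to the product $\star$ from \cref{from-*-to-star}, since $\phi$ swaps the roles of $e_{12}$ and $e_{n-1,n}$.

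There is essentially no obstacle here: both identities are one-line consequences of interchangeability once the factorization of $e_{j,j+2}$ is in place. The only points requiring care are selecting the correct interchangeability relation for each identity (the ``$a*(b\cdot c)$'' form for the first and the ``$(a\cdot b)*c$'' form for the second) and reading off the correct entry from the two evaluation formulas; note that the argument is uniform in $j$ and requires no case distinction.
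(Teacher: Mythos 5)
Your proposal is correct and is essentially identical to the paper's proof: the paper likewise factors $e_{j,j+2}=e_{j,j+1}\cdot e_{j+1,j+2}$, applies the two interchangeability identities $a*(b\cdot c)=a\cdot(b*c)$ and $(a\cdot b)*c=(a*b)\cdot c$, and evaluates the resulting products at $(1,n)$. Your explicit use of the evaluation formulas and the remark that the second identity also follows from the first via the involution $\phi$ are fine additions, but the core argument is the same.
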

\begin{proof}
	Just evaluate the following products at $(1,n)$:
	\begin{align*}
		e_{12}*e_{j,j+2}&=e_{12}*(e_{j,j+1}\cdot e_{j+1,j+2})
		=e_{12}\cdot(e_{j,j+1}*e_{j+1,j+2}),\\
		e_{j,j+2}*e_{n-1,n}&=(e_{j,j+1}\cdot e_{j+1,j+2})*e_{n-1,n}
		=(e_{j,j+1}*e_{j+1,j+2})\cdot e_{n-1,n}.\qedhere
	\end{align*}
\end{proof}

\subsubsection{The product $e_{ik}*e_{kj}$}\label{sec-e_ik*e_kj-inter}

\begin{lem}\label{e_ik*e_kj-in-span-e_ij-e_1j}
	Let $1\le i<k<j\le n$. If $k>i+1$, then $e_{ik}*e_{kj}\in\spn_K\{e_{ij},e_{1j}\}$.
\end{lem}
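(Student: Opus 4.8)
The plan is to collapse the matching product into a single right-multiplication and then control its rows by a few left-multiplications. Since $k>i+1$, I would first factor $e_{ik}=e_{i,i+1}\cdot e_{i+1,k}$ and apply the interchangeability identity $(a\cdot b)*c=(a*b)\cdot c$ to get
\[
	e_{ik}*e_{kj}=(e_{i,i+1}\cdot e_{i+1,k})*e_{kj}=(e_{i,i+1}*e_{i+1,k})\cdot e_{kj}.
\]
Setting $Y=e_{i,i+1}*e_{i+1,k}$ and using the multiplication rule $a\cdot e_{kj}=\sum_{s\le k}a(s,k)e_{sj}$, this becomes $e_{ik}*e_{kj}=\sum_{s<k}Y(s,k)e_{sj}$, which already lies in the column-$j$ subspace $\spn_K\{e_{sj}:1\le s<k\}$. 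It therefore remains only to show that $Y(s,k)=0$ for every $s<k$ with $s\notin\{1,i\}$.

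To kill the unwanted rows of $Y$, I would use the other interchangeability identity $a\cdot(b*c)=a*(b\cdot c)$, which gives, for any $p<s$,
\[
	e_{ps}\cdot Y=e_{ps}\cdot(e_{i,i+1}*e_{i+1,k})=e_{ps}*(e_{i,i+1}\cdot e_{i+1,k})=e_{ps}*e_{ik}.
\]
When $s\ne i$ the inner indices of the product $e_{ps}*e_{ik}$ differ, so $e_{ps}*e_{ik}\in\spn_K\{e_{1n}\}$ by \cref{e_ik*e_kl-in-spn-e_1n}. For $s\ge 3$ with $s\ne i$ I would take $p=2$: since $e_{2s}\cdot Y=\sum_{t>s}Y(s,t)e_{2t}$ has every row index equal to $2\ne1$, it cannot be a nonzero multiple of $e_{1n}$, so the entire row vanishes and in particular $Y(s,k)=0$.

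The one genuinely delicate point is the row $s=2$ when $i\ne 2$, since here there is no $p$ with $1<p<s$ to run the previous argument. Instead I would take $p=1$, so that $e_{12}\cdot Y=\sum_{t>2}Y(2,t)e_{1t}\in\spn_K\{e_{1n}\}$; the only surviving entry is then at column $n$, and because $k<j\le n$ forces $k<n$, I obtain $Y(2,k)=0$. Combining all three cases, the only possibly nonzero coefficients in $\sum_{s<k}Y(s,k)e_{sj}$ are those at $s=1$ and $s=i$, whence $e_{ik}*e_{kj}=Y(i,k)e_{ij}+Y(1,k)e_{1j}\in\spn_K\{e_{ij},e_{1j}\}$. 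Apart from this $s=2$ subtlety, everything is a routine application of the two defining identities of interchangeability together with \cref{e_ik*e_kl-in-spn-e_1n}.
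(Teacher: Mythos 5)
Your proof is correct, and while it opens exactly as the paper does --- factoring $e_{ik}=e_{i,i+1}\cdot e_{i+1,k}$ and applying $(a\cdot b)*c=(a*b)\cdot c$ to land in the column-$j$ span --- the second half takes a genuinely different route. The paper pins down the entries of the product itself: for $1<t<j$ with $t\ne i$ it writes $(e_{ik}*e_{kj})(t,j)=(e_{t-1,t}\cdot(e_{ik}*e_{kj}))(t-1,j)=(e_{t-1,t}*(e_{ik}\cdot e_{kj}))(t-1,j)=(e_{t-1,t}*e_{ij})(t-1,j)$, and kills this via the vanishing lemma \cref{e_ij*e_kl=0-for-j-i+l-k>3}, which applies uniformly because $k>i+1$ together with $j>k$ forces $j-i>2$; no case split is needed. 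You instead control the rows of the intermediate factor $Y=e_{i,i+1}*e_{i+1,k}$, using only the weaker annihilator lemma \cref{e_ik*e_kl-in-spn-e_1n} plus row bookkeeping: $e_{2s}\cdot Y=e_{2s}*e_{ik}$ is supported in row $2$ yet lies in $\spn_K\{e_{1n}\}$, hence vanishes, and the leftover row $s=2$ (when $i\ne 2$) is handled via $e_{12}\cdot Y$, where your observation that $k<j\le n$ forces $k<n$ is exactly the point that saves the argument. Your version buys independence from \cref{e_ij*e_kl=0-for-j-i+l-k>3} --- only the $\Ann$-valuedness of products with mismatched inner indices is used --- at the cost of the $s=2$ case analysis; the paper's choice of adjacent multipliers $e_{t-1,t}$ applied to the full product buys a uniform, split-free argument. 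Both proofs are complete.
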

\begin{proof}
	We have 
	\begin{align}\label{e_ik*e_kj-in-span-e_kj}
		e_{ik}*e_{kj}=(e_{i,i+1}\cdot e_{i+1,k})*e_{kj}
		=(e_{i,i+1}*e_{i+1,k})\cdot e_{kj}\in\spn_K\{e_{lj}: 1\le l<j\le n\}.
	\end{align}
	Now, $j-i>2$, so for all $1<t<j$ with $t\ne i$ we have $e_{t-1,t}*e_{ij}=0$ by \cref{e_ij*e_kl=0-for-j-i+l-k>3}. Hence,
	\begin{align*}
		(e_{ik}*e_{kj})(t,j)=(e_{t-1,t}\cdot(e_{ik}*e_{kj}))(t-1,j)=(e_{t-1,t}*(e_{ik}\cdot e_{kj}))(t-1,j)
		=(e_{t-1,t}*e_{ij})(t-1,j)=0.
	\end{align*}
	Thus, in fact $l\in\{1,i\}$ in \cref{e_ik*e_kj-in-span-e_kj}.
\end{proof}

\begin{cor}\label{e_ik*e_kj-in-span-e_ij-e_in}
	Let $1\le i<k<j\le n$. If $j>k+1$, then $e_{ik}*e_{kj}\in\spn_K\{e_{ij},e_{in}\}$.
\end{cor}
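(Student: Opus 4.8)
The plan is to deduce this corollary from \cref{e_ik*e_kj-in-span-e_ij-e_1j} by the same reflection trick already used for \cref{e_ik*e_kj-for-1<i<k<j-le-n}, exploiting the involution $\phi$ of \cref{inv-phi}. The guiding observation is that the present hypothesis $j>k+1$ (a large gap between the middle index and the last index) together with the conclusion $\spn_K\{e_{ij},e_{in}\}$ (extra term supported in the last row $n$) are exactly the $\phi$-mirror images of the hypothesis $k>i+1$ and conclusion $\spn_K\{e_{ij},e_{1j}\}$ of \cref{e_ik*e_kj-in-span-e_ij-e_1j} (large gap on the left, extra term supported in the first column $1$).

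Concretely, I would first note that, since $*$ is interchangeable with $\cdot$, \cref{from-*-to-star} guarantees that the reflected product $\star$ defined by \cref{a-star-b=phi(phi-inv(b)*phi-inv(a))} is again interchangeable with $\cdot$. Hence \cref{e_ik*e_kj-in-span-e_ij-e_1j} applies verbatim to $\star$: whenever $1\le I<K<J\le n$ with $K>I+1$, one has $e_{IK}\star e_{KJ}\in\spn_K\{e_{IJ},e_{1J}\}$.

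Next I would substitute the mirrored indices $I=n-j+1$, $K=n-k+1$, $J=n-i+1$. A one-line check shows that the chain $1\le I<K<J\le n$ is equivalent to $1\le i<k<j\le n$ and that $K>I+1$ is equivalent to precisely our hypothesis $j>k+1$. Expanding $e_{IK}\star e_{KJ}=\phi(\phi(e_{KJ})*\phi(e_{IK}))$ via \cref{vf(e_kl)=e_(n-l+1_n-k+1)} identifies $\phi(e_{KJ})*\phi(e_{IK})$ with $e_{ik}*e_{kj}$, so the membership for $\star$ reads $\phi(e_{ik}*e_{kj})\in\spn_K\{e_{n-j+1,n-i+1},e_{1,n-i+1}\}$. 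Applying the involution $\phi$ once more and using $\phi(e_{n-j+1,n-i+1})=e_{ij}$ and $\phi(e_{1,n-i+1})=e_{in}$ yields $e_{ik}*e_{kj}\in\spn_K\{e_{ij},e_{in}\}$, as claimed.

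The argument is entirely formal, so there is no genuine obstacle; the only care needed is the index bookkeeping under $\phi$, namely verifying that $K>I+1$ translates exactly to $j>k+1$ and that the two basis vectors $e_{IJ},e_{1J}$ reflect to $e_{ij},e_{in}$. Since each of these is a single substitution, the whole proof collapses to the one sentence ``apply \cref{e_ik*e_kj-in-span-e_ij-e_1j} to the product $\star$ from \cref{from-*-to-star}.''
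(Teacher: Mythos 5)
Your proposal is correct and is exactly the paper's argument: the paper's proof is the one-liner ``apply \cref{e_ik*e_kj-in-span-e_ij-e_1j} to the product $\star$ from \cref{from-*-to-star}'', and your index bookkeeping (including the translation of $K>I+1$ into $j>k+1$ and of $e_{IJ},e_{1J}$ into $e_{ij},e_{in}$ under the involution \cref{vf(e_kl)=e_(n-l+1_n-k+1)}) spells out precisely what that sentence leaves implicit.
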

\begin{proof}
	This follows by applying \cref{e_ik*e_kj-in-span-e_ij-e_1j} to the product $\star$ from \cref{from-*-to-star}.
\end{proof}

\begin{cor}\label{e_ik*e_kj-in-span-e_ij}
	Let $1\le i<k<j\le n$. If $k>i+1$ and $j>k+1$, then $e_{ik}*e_{kj}\in\spn_K\{e_{ij}\}$.
\end{cor}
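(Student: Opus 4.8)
The plan is to obtain the result simply as the intersection of the two inclusions already established for $e_{ik}*e_{kj}$. Under the standing hypotheses $1\le i<k<j\le n$, the extra assumption $k>i+1$ is precisely what is needed to invoke \cref{e_ik*e_kj-in-span-e_ij-e_1j}, giving $e_{ik}*e_{kj}\in\spn_K\{e_{ij},e_{1j}\}$, while the assumption $j>k+1$ is exactly the hypothesis of \cref{e_ik*e_kj-in-span-e_ij-e_in}, giving $e_{ik}*e_{kj}\in\spn_K\{e_{ij},e_{in}\}$. Since the product lies in both subspaces, it lies in their intersection, and the whole task reduces to identifying that intersection.

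The second step is to compute it. In the generic case $i>1$ and $j<n$ the three matrix units $e_{ij}$, $e_{1j}$, $e_{in}$ are pairwise distinct elements of the canonical basis of $UT_n(K)$, so $\spn_K\{e_{ij},e_{1j}\}\cap\spn_K\{e_{ij},e_{in}\}=\spn_K\{e_{ij}\}$, whence the claim. In the boundary cases one of the two spans degenerates: if $i=1$ then $e_{1j}=e_{ij}$ and \cref{e_ik*e_kj-in-span-e_ij-e_1j} alone already yields $e_{ik}*e_{kj}\in\spn_K\{e_{ij}\}$; symmetrically, if $j=n$ then $e_{in}=e_{ij}$ and \cref{e_ik*e_kj-in-span-e_ij-e_in} alone suffices. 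Thus in every case the intersection is $\spn_K\{e_{ij}\}$, which is exactly the assertion.

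This mirrors the one-line arguments used for the analogous statements earlier (e.g. \cref{e_ii+1*e_jj+1-in-span-e_1n}), so no genuine obstacle arises; the only point requiring a moment's care is the bookkeeping of the degenerate indices $i=1$ and $j=n$, where a basis element coincides with $e_{ij}$. These coincidences only collapse one of the two spans to $\spn_K\{e_{ij}\}$ outright, so they strengthen rather than threaten the conclusion.
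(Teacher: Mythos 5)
Your proposal is correct and matches the paper's proof exactly: the paper likewise intersects the conclusions of \cref{e_ik*e_kj-in-span-e_ij-e_1j} and \cref{e_ik*e_kj-in-span-e_ij-e_in} to get $\spn_K\{e_{ij},e_{1j}\}\cap\spn_K\{e_{ij},e_{in}\}=\spn_K\{e_{ij}\}$. Your extra bookkeeping for the degenerate cases $i=1$ and $j=n$ is a harmless refinement the paper leaves implicit, since in those cases one span already collapses to $\spn_K\{e_{ij}\}$ and the stated intersection formula still holds.
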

\begin{proof}
	By \cref{e_ik*e_kj-in-span-e_ij-e_1j,e_ik*e_kj-in-span-e_ij-e_in} we have
	$
		e_{ik}*e_{kj}\in\spn_K\{e_{ij},e_{1j}\}\cap\spn_K\{e_{ij},e_{in}\}=\spn_K\{e_{ij}\}.
	$
\end{proof}

\begin{lem}\label{e_ik*e_kk+1-in-span-e_ik+1}
	Let $1\le i<k<n-1$. If $k>i+1$, then $e_{ik}*e_{k,k+1}\in\spn_K\{e_{i,k+1}\}$.
\end{lem}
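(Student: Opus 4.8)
The plan is to first pin $e_{ik}*e_{k,k+1}$ down to a two-dimensional span and then eliminate the unwanted term. Since $k>i+1$, \cref{e_ik*e_kj-in-span-e_ij-e_1j} already gives $e_{ik}*e_{k,k+1}\in\spn_K\{e_{i,k+1},e_{1,k+1}\}$. When $i=1$ the two basis vectors coincide and there is nothing to prove, so I would assume $i>1$ and aim to show that the coefficient of $e_{1,k+1}$, i.e. the value $(e_{ik}*e_{k,k+1})(1,k+1)$, vanishes. Note that one cannot simply invoke \cref{e_ik*e_kj-in-span-e_ij} here: that corollary requires both gaps to exceed $1$, whereas the second factor $e_{k,k+1}$ makes the right gap equal to $1$, so the complementary bound $\spn_K\{e_{ij},e_{in}\}$ of \cref{e_ik*e_kj-in-span-e_ij-e_in} is unavailable for this product.

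The key device is to transport the offending entry out to the last column, where it becomes distinguishable. Since $k<n-1$ we have $k+1<n$, so right multiplication by $e_{k+1,n}$ is legitimate, and interchangeability ($(a*b)\cdot c=(a\cdot b)*c$) gives
\[
(e_{ik}*e_{k,k+1})\cdot e_{k+1,n}=(e_{ik}\cdot e_{k,k+1})*e_{k+1,n}=e_{i,k+1}*e_{k+1,n}.
\]
As $e_{i,k+1}\cdot e_{k+1,n}=e_{in}$ and $e_{1,k+1}\cdot e_{k+1,n}=e_{1n}$, this carries the $(1,k+1)$-entry to the $(1,n)$-entry; hence it suffices to prove $(e_{i,k+1}*e_{k+1,n})(1,n)=0$.

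For this last step I would split off the leading elementary factor. Writing $m=k+1$, the hypothesis $k\ge i+2$ gives $m\ge i+3$, so $e_{im}=e_{i,i+1}\cdot e_{i+1,m}$ with $i+1<m$, and interchangeability yields $e_{im}*e_{m,n}=(e_{i,i+1}*e_{i+1,m})\cdot e_{m,n}$. The point of this rearrangement is that the remaining product $e_{i,i+1}*e_{i+1,m}$ has its right gap equal to $m-(i+1)=k-i\ge2>1$, so \cref{e_ik*e_kj-in-span-e_ij-e_in} now applies and places it in $\spn_K\{e_{im},e_{in}\}$. Multiplying back by $e_{m,n}$ and using $e_{im}\cdot e_{m,n}=e_{in}$ together with $e_{in}\cdot e_{m,n}=0$ (valid because $m<n$) forces $e_{im}*e_{m,n}\in\spn_K\{e_{in}\}$, whose $(1,n)$-entry is zero since $i>1$. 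This gives $(e_{ik}*e_{k,k+1})(1,k+1)=0$ and completes the argument.

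The main obstacle is exactly the degenerate right gap: because $j=k+1$, neither \cref{e_ik*e_kj-in-span-e_ij-e_in} nor \cref{e_ik*e_kj-in-span-e_ij} can be applied to $e_{ik}*e_{k,k+1}$ directly. The two maneuvers above — pushing the column out to $n$ and then peeling off $e_{i,i+1}$ to restore a right gap of size $\ge2$ — are designed precisely to circumvent this, and one must check at each stage that the index restrictions ($k+1<n$ and $m\ge i+3$) required by the invoked results hold under the hypotheses $k>i+1$ and $k<n-1$.
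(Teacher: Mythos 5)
Your proof is correct and follows essentially the same strategy as the paper's: both reduce the claim (via \cref{e_ik*e_kj-in-span-e_ij-e_1j}, with the case $i=1$ trivial) to showing $(e_{ik}*e_{k,k+1})(1,k+1)=0$ for $i>1$, and both accomplish this by applying interchangeability twice — peeling off the leading factor $e_{i,i+1}$ and transporting the offending entry to the last column. The only difference is the order of the two maneuvers: the paper peels first and lands on $(e_{ik}*e_{kn})(1,n)$, which vanishes directly by \cref{e_ik*e_kj-in-span-e_ij}, whereas you transport first (arriving at $e_{i,k+1}*e_{k+1,n}$, where the right gap may degenerate when $k=n-2$) and therefore need \cref{e_ik*e_kj-in-span-e_ij-e_in} together with one extra right multiplication by $e_{k+1,n}$ — a slightly longer but equally valid route, with all index restrictions correctly verified.
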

\begin{proof}
	If $i=1$, then the statement trivially follows from \cref{e_ik*e_kj-in-span-e_ij-e_1j}. Let $i>1$. In view of \cref{e_ik*e_kj-in-span-e_ij-e_1j} it suffices to prove that $(e_{ik}*e_{k,k+1})(1,k+1)=0$. We have 
	\begin{align*}
		(e_{ik}*e_{k,k+1})(1,k+1)&=((e_{i,i+1}\cdot e_{i+1,k})*e_{k,k+1})(1,k+1)
		=((e_{i,i+1}*e_{i+1,k})\cdot e_{k,k+1})(1,k+1)\\
		&=(e_{i,i+1}*e_{i+1,k})(1,k)=((e_{i,i+1}*e_{i+1,k})\cdot e_{kn})(1,n)\\
		&=((e_{i,i+1}\cdot e_{i+1,k})*e_{kn})(1,n)=(e_{ik}*e_{kn})(1,n),
	\end{align*}
	which is zero by \cref{e_ik*e_kj-in-span-e_ij}, because $k>i+1$ and $n>k+1$ and $i>1$. 
\end{proof}

\begin{cor}\label{e_ii+1*e_i+1j-in-span-e_ij}
	Let $2<k<j\le n$. If $j>k+1$, then $e_{k-1,k}*e_{kj}\in\spn_K\{e_{k-1,j}\}$.
\end{cor}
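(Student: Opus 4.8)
The plan is to deduce this corollary from \cref{e_ik*e_kk+1-in-span-e_ik+1} by applying that lemma to the product $\star$ obtained from $*$ via \cref{from-*-to-star}, exactly as was done for the neighbouring corollaries such as \cref{e_ik*e_kj-in-span-e_ij-e_in}. Since $*$ is interchangeable with $\cdot$, so is $\star$ by \cref{from-*-to-star}, hence \cref{e_ik*e_kk+1-in-span-e_ik+1} is available for $\star$.

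First I would apply \cref{e_ik*e_kk+1-in-span-e_ik+1} to $\star$ in the form: for all $1\le i<k'<n-1$ with $k'>i+1$ one has $e_{ik'}\star e_{k',k'+1}\in\spn_K\{e_{i,k'+1}\}$. Next I would unwind the definition \cref{a-star-b=phi(phi-inv(b)*phi-inv(a))} of $\star$ together with the involution \cref{vf(e_kl)=e_(n-l+1_n-k+1)}: using $\phi\m=\phi$ and $\phi(e_{st})=e_{n-t+1,n-s+1}$, the left-hand side becomes $\phi\bigl(e_{n-k',n-k'+1}*e_{n-k'+1,n-i+1}\bigr)$, so applying the involution $\phi$ to the membership above yields
\begin{align*}
	e_{n-k',n-k'+1}*e_{n-k'+1,n-i+1}\in\spn_K\{e_{n-k',n-i+1}\}.
\end{align*}

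Then I would make the substitution $k'=n-k+1$ and $i=n-j+1$, so that the first factor becomes $e_{k-1,k}$, the second factor becomes $e_{kj}$, and the target span becomes $\spn_K\{e_{k-1,j}\}$, giving exactly the desired conclusion. It remains to check that the hypotheses transform correctly: $1\le i$ corresponds to $j\le n$, the inequality $i<k'$ to $k<j$, the bound $k'<n-1$ to $k>2$, and $k'>i+1$ to $j>k+1$; these are precisely the hypotheses $2<k<j\le n$ and $j>k+1$ of the corollary.

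The only possible obstacle is bookkeeping: one must track the index reversal under $\phi$ carefully to confirm that the admissible range of indices in \cref{e_ik*e_kk+1-in-span-e_ik+1} maps exactly onto the range claimed here, with no boundary case lost or spuriously introduced. Once the index dictionary $k'=n-k+1$, $i=n-j+1$ is fixed, this is a routine verification with no genuine mathematical difficulty.
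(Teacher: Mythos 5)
Your proposal is correct and is exactly the paper's argument: the paper also proves this corollary by applying \cref{e_ik*e_kk+1-in-span-e_ik+1} to the product $\star$ from \cref{from-*-to-star}. Your index dictionary $k'=n-k+1$, $i=n-j+1$ and the verification that the hypotheses $1\le i<k'<n-1$, $k'>i+1$ translate precisely into $2<k<j\le n$, $j>k+1$ are all accurate, so you have merely made explicit the bookkeeping the paper leaves implicit.
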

\begin{proof}
	This follows by applying \cref{e_ik*e_kk+1-in-span-e_ik+1} to the product $\star$ from \cref{from-*-to-star}.
\end{proof}

\begin{lem}\label{e_in-1*e_n-1n-in-span-e_in}
	Let $1\le i<n-3$. Then $e_{i,n-1}*e_{n-1,n}\in\spn_K\{e_{in}\}$.
\end{lem}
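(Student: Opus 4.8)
The plan is to reduce the statement to \cref{e_ik*e_kj-in-span-e_ij-e_in} by factoring the left-hand argument. Observe first that $e_{i,n-1}*e_{n-1,n}$ is a product of the form $e_{ik}*e_{kj}$ with $k=n-1$ and $j=n$, i.e. $j=k+1$. This is precisely the boundary case $k=n-1$ that is \emph{excluded} from \cref{e_ik*e_kk+1-in-span-e_ik+1}: the argument there reduces $(e_{ik}*e_{k,k+1})(1,k+1)$ to $(e_{ik}*e_{kn})(1,n)$ and invokes \cref{e_ik*e_kj-in-span-e_ij} under the assumption $n>k+1$, which fails exactly when $k=n-1$. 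So a slightly different route is needed.

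Since $i<n-3$, we have $i+1<n-1$, hence $e_{i,n-1}=e_{i,i+1}\cdot e_{i+1,n-1}$. Using interchangeability in the form $(a\cdot b)*c=(a*b)\cdot c$, I would write
\[
	e_{i,n-1}*e_{n-1,n}=(e_{i,i+1}\cdot e_{i+1,n-1})*e_{n-1,n}=(e_{i,i+1}*e_{i+1,n-1})\cdot e_{n-1,n}.
\]
Now $e_{i,i+1}*e_{i+1,n-1}$ is of the form $e_{ik}*e_{kj}$ with $k=i+1$ and $j=n-1$, and the hypothesis $i<n-3$ gives exactly $j=n-1>k+1=i+2$. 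Thus \cref{e_ik*e_kj-in-span-e_ij-e_in} applies (note that \cref{e_ik*e_kj-in-span-e_ij-e_1j} does \emph{not}, since here $k=i+1$ is not $>i+1$), yielding $e_{i,i+1}*e_{i+1,n-1}\in\spn_K\{e_{i,n-1},e_{in}\}$.

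Finally I would multiply on the right by $e_{n-1,n}$: since $e_{i,n-1}\cdot e_{n-1,n}=e_{in}$ and $e_{in}\cdot e_{n-1,n}=0$, the displayed expression lies in $\spn_K\{e_{in}\}$, which is the assertion. The only point requiring care is the index bookkeeping, namely that the hypothesis $i<n-3$ is precisely what guarantees $n-1>i+2$ and hence the applicability of \cref{e_ik*e_kj-in-span-e_ij-e_in} to $e_{i,i+1}*e_{i+1,n-1}$; beyond this there is no real obstacle.
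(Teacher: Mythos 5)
Your proof is correct, and it takes a genuinely different (and slightly cleaner) route than the paper's. Both arguments start from the same factorization $e_{i,n-1}=e_{i,i+1}\cdot e_{i+1,n-1}$ and the same interchangeability move $(a\cdot b)*c=(a*b)\cdot c$, but they diverge after that. The paper first places the whole product in $\spn_K\{e_{in},e_{1n}\}$ by applying \cref{e_ik*e_kj-in-span-e_ij-e_1j} with $k=n-1$ (legitimate since $i<n-3$ gives $n-1>i+1$), disposes of $i=1$ as trivial, and for $i>1$ kills the residual $(1,n)$-entry by evaluating $(e_{i,n-1}*e_{n-1,n})(1,n)=(e_{i,i+1}*e_{i+1,n-1})(1,n-1)$ and invoking \cref{e_ii+1*e_i+1j-in-span-e_ij}, whose hypothesis $2<k$ is precisely what forces the case split. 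You instead apply the dual statement \cref{e_ik*e_kj-in-span-e_ij-e_in} to the \emph{inner} factor: with $k=i+1$ and $j=n-1$ the hypothesis $j>k+1$ is exactly $i<n-3$, giving $e_{i,i+1}*e_{i+1,n-1}\in\spn_K\{e_{i,n-1},e_{in}\}$, after which right multiplication by $e_{n-1,n}$ maps this span into $\spn_K\{e_{in}\}$ because $e_{i,n-1}\cdot e_{n-1,n}=e_{in}$ and $e_{in}\cdot e_{n-1,n}=0$. Your version needs only one prior result, works uniformly in $i$ (no separate $i=1$ case), and replaces the entry-wise evaluation by a one-line span computation; the paper's version has the mild structural virtue of matching the entry-evaluation style used throughout \cref{sec-e_ik*e_kj-inter} and of being the literal mirror (via the involution of \cref{from-*-to-star}) of the argument for \cref{e_12*e_2j-in-span-e_in}. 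Your side remarks are also accurate: \cref{e_ik*e_kj-in-span-e_ij-e_1j} indeed does not apply to the inner product because there $k=i+1$, and \cref{e_ik*e_kk+1-in-span-e_ik+1} indeed excludes $k=n-1$ for exactly the reason you identify.
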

\begin{proof}
	The statement is trivial for $i=1$ in view of \cref{e_ik*e_kj-in-span-e_ij-e_1j}. Let $i>1$. By \cref{e_ik*e_kj-in-span-e_ij-e_1j} it suffices to prove that $(e_{i,n-1}*e_{n-1,n})(1,n)=0$. We have 
	\begin{align*}
		(e_{i,n-1}*e_{n-1,n})(1,n)=((e_{i,i+1}\cdot e_{i+1,n-1})*e_{n-1,n})(1,n)
		&=((e_{i,i+1}*e_{i+1,n-1})\cdot e_{n-1,n})(1,n)\\
		&=(e_{i,i+1}*e_{i+1,n-1})(1,n-1),
	\end{align*}
	which is zero by \cref{e_ii+1*e_i+1j-in-span-e_ij} because $n-1>i+2$ and $i>1$.
\end{proof}

\begin{cor}\label{e_12*e_2j-in-span-e_in}
	Let $4<j\le n$. Then $e_{12}*e_{2j}\in\spn_K\{e_{1j}\}$.
\end{cor}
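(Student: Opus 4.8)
The plan is to derive this corollary exactly as the previous ones of this kind, namely by transporting the preceding lemma through the dualized product $\star$ of \cref{from-*-to-star}. First I would note that since $*$ is interchangeable with $\cdot$, so is $\star$ by the second item of \cref{from-*-to-star}; consequently \cref{e_in-1*e_n-1n-in-span-e_in} applies verbatim with $*$ replaced by $\star$, giving $e_{i,n-1}\star e_{n-1,n}\in\spn_K\{e_{in}\}$ for every $1\le i<n-3$.

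Next I would unwind the definition \cref{a-star-b=phi(phi-inv(b)*phi-inv(a))} of $\star$, using that $\phi$ is the involution of \cref{inv-phi}, so $\phi\m=\phi$. Evaluating $\phi$ on the two factors gives $\phi(e_{n-1,n})=e_{12}$ and $\phi(e_{i,n-1})=e_{2,n-i+1}$, whence $e_{i,n-1}\star e_{n-1,n}=\phi(e_{12}*e_{2,n-i+1})$. Applying $\phi$ to the membership above and using $\phi(e_{in})=e_{1,n-i+1}$, I obtain $e_{12}*e_{2,n-i+1}\in\spn_K\{e_{1,n-i+1}\}$.

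Finally I would set $j=n-i+1$: as $i$ runs over $1\le i<n-3$ (that is, $1\le i\le n-4$), the index $j$ runs over $5\le j\le n$, which is precisely $4<j\le n$, and the membership becomes the desired $e_{12}*e_{2j}\in\spn_K\{e_{1j}\}$. There is no substantive obstacle here; the only point requiring care is the index bookkeeping, i.e.\ verifying that the constraint $1\le i<n-3$ translates exactly into the range $4<j\le n$ under the substitution $j=n-i+1$.
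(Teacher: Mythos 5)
Your proof is correct and is essentially the paper's intended argument: transport through the involution $\phi$ of \cref{inv-phi} and the dualized product $\star$ of \cref{from-*-to-star}, with the substitution $j=n-i+1$ and the translation of $1\le i<n-3$ into $4<j\le n$ both checked correctly. One remark in your favor: the paper's one-line proof cites \cref{e_ik*e_kk+1-in-span-e_ik+1}, but that lemma dualizes to \cref{e_ii+1*e_i+1j-in-span-e_ij} (which only covers $e_{k-1,k}*e_{kj}$ with $k>2$ and so cannot reach $e_{12}*e_{2j}$); the citation is evidently a slip for \cref{e_in-1*e_n-1n-in-span-e_in}, which is exactly the lemma you dualize, so your write-up actually supplies the corrected reference together with the explicit index bookkeeping the paper omits.
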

\begin{proof}
	This follows by applying \cref{e_ik*e_kk+1-in-span-e_ik+1}  to the product $\star$ from \cref{from-*-to-star}.
\end{proof}

\begin{lem}
	Let $1\le i\le n-2$. Then
	$
		e_{i,i+1}*e_{i+1,i+2}\in\spn_K\{e_{i,i+2},e_{1,n-1},e_{1n},e_{2n}\}.
	$
\end{lem}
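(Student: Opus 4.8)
The plan is to set $M:=e_{i,i+1}*e_{i+1,i+2}=\sum_{1\le s<t\le n}c_{st}e_{st}$ and to pin down the support $\{(s,t):c_{st}\ne 0\}$ by multiplying $M$ by suitable matrix units on the left and on the right and invoking interchangeability together with \cref{e_ik*e_kl-in-spn-e_1n}. Note that the middle indices of $e_{i,i+1}$ and $e_{i+1,i+2}$ coincide, so the earlier ``off-diagonal'' lemmas do not apply directly and $M$ is not forced into the annihilator; this is exactly the diagonal case left open by \cref{e_ik*e_kj-in-span-e_ij,e_ik*e_kk+1-in-span-e_ik+1}.

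First I would exploit the row structure. For $2\le s\le n$ interchangeability gives $e_{s-1,s}\cdot M=e_{s-1,s}*(e_{i,i+1}\cdot e_{i+1,i+2})=e_{s-1,s}*e_{i,i+2}$, and when $s\ne i$ the middle indices differ, so \cref{e_ik*e_kl-in-spn-e_1n} yields $e_{s-1,s}\cdot M\in\spn_K\{e_{1n}\}$. Reading off $e_{s-1,s}\cdot M=\sum_{t>s}c_{st}e_{s-1,t}$, this kills row $s$ whenever $s\ge 3$ and $s\ne i$, and forces $c_{2t}=0$ for $2<t<n$ when $i\ne 2$. Dually, for $1\le t\le n-1$ interchangeability gives $M\cdot e_{t,t+1}=(e_{i,i+1}\cdot e_{i+1,i+2})*e_{t,t+1}=e_{i,i+2}*e_{t,t+1}$, which lies in $\spn_K\{e_{1n}\}$ when $t\ne i+2$; from $M\cdot e_{t,t+1}=\sum_{s<t}c_{st}e_{s,t+1}$ this kills column $t$ whenever $t\le n-2$ and $t\ne i+2$, and leaves only $c_{1,n-1}$ alive in column $n-1$ when $i\ne n-3$. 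Combining the two constraints, the only positions that can survive are
\[
(1,i+2),\quad (1,n-1),\quad (1,n),\quad (2,n),\quad (i,i+2),\quad (i,n).
\]

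Among these, $(1,n-1)$, $(1,n)$, $(2,n)$ and $(i,i+2)$ are precisely the four generators allowed by the statement, so it remains to kill $(1,i+2)$ and $(i,n)$, and only in the ranges where they are genuinely new. For $(1,i+2)$, assuming $2\le i\le n-4$ (so that $i+2\notin\{n-1,n\}$), I would multiply on the right by $e_{i+2,i+3}$: interchangeability turns $M\cdot e_{i+2,i+3}$ into $e_{i,i+2}*e_{i+2,i+3}$, which lies in $\spn_K\{e_{i,i+3}\}$ by \cref{e_ik*e_kk+1-in-span-e_ik+1}; comparing with $M\cdot e_{i+2,i+3}=c_{1,i+2}e_{1,i+3}+c_{i,i+2}e_{i,i+3}$ and using $i\ge 2$ forces $c_{1,i+2}=0$. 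Symmetrically, for $(i,n)$, assuming $3\le i\le n-3$ (so that $i\notin\{1,2,n-2\}$), I would multiply on the left by $e_{i-1,i}$: interchangeability turns $e_{i-1,i}\cdot M$ into $e_{i-1,i}*e_{i,i+2}$, which lies in $\spn_K\{e_{i-1,i+2}\}$ by \cref{e_ii+1*e_i+1j-in-span-e_ij}; comparing with $e_{i-1,i}\cdot M=c_{i,i+2}e_{i-1,i+2}+c_{i,n}e_{i-1,n}$ and using $i\ne n-2$ forces $c_{i,n}=0$. In the excluded boundary values of $i$ each offending position simply coincides with one of $e_{i,i+2}$, $e_{1,n-1}$, $e_{1n}$, $e_{2n}$, so no extra work is required.

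The extraction computations are routine; the main thing to watch is the boundary bookkeeping near $i=1$ and $i=n-2$, where several of the six candidate positions collapse onto one another or onto allowed generators, and where one must verify that the index hypotheses of \cref{e_ik*e_kk+1-in-span-e_ik+1,e_ii+1*e_i+1j-in-span-e_ij} are actually satisfied before applying them.
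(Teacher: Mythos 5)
Your proof is correct, and its first stage is the same as the paper's: multiplying $M=e_{i,i+1}*e_{i+1,i+2}$ on the left by $e_{s-1,s}$ and on the right by $e_{t,t+1}$, converting via interchangeability to $e_{s-1,s}*e_{i,i+2}$ and $e_{i,i+2}*e_{t,t+1}$, and invoking \cref{e_ik*e_kl-in-spn-e_1n}. You actually push this stage slightly further than the paper does: by also reading off the $s=2$ and $t=n-1$ cases you reduce the support to six candidate positions at once, whereas the paper's \cref{e_ii+1*e_i+1i+2-in-span-e_st} retains nine (all $s\in\{1,2,i\}$, $t\in\{i+2,n-1,n\}$) and must clean up row $2$ separately inside each case. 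The second stage is where you genuinely diverge. The paper splits into the cases $i>2$, $i=2$, $i=1$ and multiplies by ``long'' matrix units --- $e_{1i}$ on the left and $e_{i+2,n}$ (or $e_{4n}$, $e_{3n}$) on the right --- invoking \cref{e_ik*e_kj-in-span-e_ij,e_ik*e_kj-in-span-e_ij-e_in,e_ik*e_kj-in-span-e_ij-e_1j}. You instead kill the two stray positions $(1,i+2)$ and $(i,n)$ uniformly, multiplying by the \emph{adjacent} units $e_{i+2,i+3}$ and $e_{i-1,i}$ and invoking the sharper one-dimensional span statements \cref{e_ik*e_kk+1-in-span-e_ik+1,e_ii+1*e_i+1j-in-span-e_ij}; I checked that their index hypotheses ($i+2<n-1$, resp.\ $i>2$ and $i+2\le n$) hold exactly in your stated ranges $2\le i\le n-4$ and $3\le i\le n-3$, and that outside these ranges the offending positions do collapse onto allowed generators, as you claim (e.g.\ $(1,i+2)=(1,n-1)$ for $i=n-3$, $(i,n)=(i,i+2)$ for $i=n-2$). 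Your route buys a cleaner, case-free cleanup at the price of the boundary bookkeeping you flag; the paper's case split avoids delicate range conditions but repeats similar computations three times. Both are valid, and neither uses any result proved after the lemma, so there is no circularity.
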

\begin{proof}
	For all $2<s<t$ with $s\ne i$ we have $e_{s-1,s}*e_{i,i+2}\in\spn_K\{e_{1n}\}$ by \cref{e_ik*e_kl-in-spn-e_1n}. Hence,
	\begin{align*}
		(e_{i,i+1}*e_{i+1,i+2})(s,t)=(e_{s-1,s}\cdot(e_{i,i+1}*e_{i+1,i+2}))(s-1,t)
		&=(e_{s-1,s}*(e_{i,i+1}\cdot e_{i+1,i+2}))(s-1,t)\\
		&=(e_{s-1,s}*e_{i,i+2})(s-1,t)=0.
	\end{align*} 
	Similarly, for all $1\le s<t<n-1$ with $t\ne i+2$ we have $e_{i,i+2}*e_{t,t+1}\in\spn_K\{e_{1n}\}$ by \cref{e_ik*e_kl-in-spn-e_1n}. Hence,
	\begin{align*}
		(e_{i,i+1}*e_{i+1,i+2})(s,t)=((e_{i,i+1}*e_{i+1,i+2})\cdot e_{t,t+1})(s,t+1)
		&=((e_{i,i+1}\cdot e_{i+1,i+2})* e_{t,t+1})(s,t+1)\\
		&=(e_{i,i+2}*e_{t,t+1})(s,t+1)=0.
	\end{align*}
	Thus,
	\begin{align}\label{e_ii+1*e_i+1i+2-in-span-e_st}
		e_{i,i+1}*e_{i+1,i+2}\in\spn_K\{e_{st}: s\in\{1,2,i\},t\in\{i+2,n-1,n\}\}.
	\end{align}
	
	\textit{Case 1.} $i>2$. Then 
	\begin{align*}
		e_{1i}\cdot(e_{i,i+1}*e_{i+1,i+2})=e_{1i}*(e_{i,i+1}\cdot e_{i+1,i+2})=e_{1i}*e_{i,i+2}\in\spn_K\{e_{1,i+2}\}
	\end{align*}
	by \cref{e_ik*e_kj-in-span-e_ij}. So, one can remove $(s,t)=(i,n-1)$ and $(s,t)=(i,n)$ from \cref{e_ii+1*e_i+1i+2-in-span-e_st}. Furthermore,
	\begin{align*}
		e_{12}\cdot(e_{i,i+1}*e_{i+1,i+2})=e_{12}*(e_{i,i+1}\cdot e_{i+1,i+2})=e_{12}*e_{i,i+2}\in\spn_K\{e_{1n}\}
	\end{align*}
	by \cref{e_ik*e_kl-in-spn-e_1n}. Hence, $(s,t)=(2,i+2)$ and $(s,t)=(2,n-1)$ can also be removed from \cref{e_ii+1*e_i+1i+2-in-span-e_st}. If $i+2\in\{n-1,n\}$, we are done. Otherwise $i+2<n-1$, so we have 
	\begin{align*}
		(e_{i,i+1}*e_{i+1,i+2})\cdot e_{i+2,n}=(e_{i,i+1}\cdot e_{i+1,i+2})*e_{i+2,n}=e_{i,i+2}*e_{i+2,n}\in\spn_K\{e_{in}\}
	\end{align*}
	by \cref{e_ik*e_kj-in-span-e_ij}, and we can finally remove $(s,t)=(1,i+2)$ from \cref{e_ii+1*e_i+1i+2-in-span-e_st}.
	
	\textit{Case 2.} $i=2$. Then 
	\begin{align}\label{e_i_i+1*e_i+1_i+2-in-span-i=2}
		e_{i,i+1}*e_{i+1,i+2}\in\spn_K\{e_{st}: s\in\{1,2\},t\in\{4,n-1,n\}\}
	\end{align}
	by \cref{e_ii+1*e_i+1i+2-in-span-e_st}. To complete the proof, we must discard $(s,t)=(2,n-1)$ and $(s,t)=(1,4)$ from \cref{e_i_i+1*e_i+1_i+2-in-span-i=2}. We have 
	\begin{align*}
		e_{12}\cdot(e_{i,i+1}*e_{i+1,i+2})=e_{12}\cdot(e_{23}*e_{34})
		=e_{12}*(e_{23}\cdot e_{34})
		=e_{12}*e_{24}\in\spn_K\{e_{14},e_{1n}\}
	\end{align*}
	by \cref{e_ik*e_kj-in-span-e_ij-e_in}, so $(s,t)=(2,n-1)$ can be discarded from \cref{e_i_i+1*e_i+1_i+2-in-span-i=2}. If $4\in\{n-1,n\}$, we are done. Otherwise, $4<n-1$, so, as in Case 1, we have 
	\begin{align*}
		(e_{i,i+1}*e_{i+1,i+2})\cdot e_{4n}=(e_{23}*e_{34})\cdot e_{4n}=(e_{23}\cdot e_{34})*e_{4n}
		=e_{24}*e_{4n}\in\spn_K\{e_{2n}\}
	\end{align*}
	 by \cref{e_ik*e_kj-in-span-e_ij}, and we can also discard $(s,t)=(1,4)$ from \cref{e_i_i+1*e_i+1_i+2-in-span-i=2}.
	
	\textit{Case 3.} $i=1$. Then 
	\begin{align}\label{e_i_i+1*e_i+1_i+2-in-span-i=1}
		e_{i,i+1}*e_{i+1,i+2}\in\spn_K\{e_{st}: s\in\{1,2\},t\in\{3,n-1,n\}\}
	\end{align}
	by\cref{e_ii+1*e_i+1i+2-in-span-e_st}. To complete the proof, we must discard $(s,t)=(2,n-1)$ and $(s,t)=(2,3)$ from \cref{e_i_i+1*e_i+1_i+2-in-span-i=2}. As in Case 1, we have
	\begin{align*}
		e_{12}\cdot(e_{i,i+1}*e_{i+1,i+2})=e_{12}\cdot(e_{12}*e_{23})=e_{12}*(e_{12}\cdot e_{23})=e_{12}*e_{13}\in\spn_K\{e_{1n}\}
	\end{align*}
	by \cref{e_ik*e_kl-in-spn-e_1n}. Hence, one can discard $(s,t)=(2,n-1)$ from \cref{e_i_i+1*e_i+1_i+2-in-span-i=1}. If $n=3$, we are done. Otherwise, $n>3$, so we have
	\begin{align*}
		(e_{i,i+1}*e_{i+1,i+2})\cdot e_{3n}=(e_{12}*e_{23})\cdot e_{3n}=(e_{12}\cdot e_{23})*e_{3n}
		=e_{13}*e_{3n}\in\spn_K\{e_{1n}\}
	\end{align*}
	 by \cref{e_ik*e_kj-in-span-e_ij-e_1j}, and we can also discard $(s,t)=(2,3)$ from \cref{e_i_i+1*e_i+1_i+2-in-span-i=1}.
\end{proof}

\begin{lem}
	Let $n>4$. Then for all $1\le i<k<j\le n$ we have
	$
		(e_{ik}*e_{kj})(i,j)=(e_{12}*e_{23})(1,3).
	$
\end{lem}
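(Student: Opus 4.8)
The plan is to reduce every entry $(e_{ik}*e_{kj})(i,j)$ to the $(1,n)$-entry of a ``corner'' product $e_{1m}*e_{mn}$ with $1<m<n$, and then to show that all such corner entries coincide. For the reduction I would use the entry formulas $e_{1i}\cdot a=\sum_m a(i,m)e_{1m}$ and $a\cdot e_{jn}=\sum_m a(m,j)e_{mn}$ together with interchangeability (i.e. $(a\cdot b)*c=(a*b)\cdot c$ and $a\cdot(b*c)=a*(b\cdot c)$). Concretely, when $j<n$ I would write $(e_{ik}*e_{kj})(i,j)=((e_{ik}*e_{kj})\cdot e_{jn})(i,n)=((e_{ik}\cdot e_{kj})*e_{jn})(i,n)=(e_{ij}*e_{jn})(i,n)$, and when $i>1$ further left-multiply by $e_{1i}$ and use $e_{1i}\cdot(e_{ij}*e_{jn})=e_{1i}*(e_{ij}\cdot e_{jn})=e_{1i}*e_{in}$ to land on $(e_{1i}*e_{in})(1,n)$; the remaining variants ($i=1$ with $j<n$, and $j=n$ with $i=1$ or $i>1$) are immediate. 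Thus in every case $(e_{ik}*e_{kj})(i,j)=(e_{1m}*e_{mn})(1,n)$ for some $m$ with $1<m<n$.

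The heart of the argument is that these corner entries are independent of $m$, and this is where I expect the main obstacle. The two-sided multiplication tricks alone only equate $(e_{1m}*e_{mn})(1,n)$ and $(e_{1m'}*e_{m'n})(1,n)$ when $|m-m'|\ge 2$, which leaves adjacent indices unlinked (for instance $m=3$ when $n=5$). To overcome this I would apply \cref{(a*b)-cdot-(c*d)-several-forms} to a four-step chain: for any $1<p<q<r<n$, taking $a=e_{1p}$, $b=e_{pq}$, $c=e_{qr}$, $d=e_{rn}$ gives $e_{1p}*e_{pn}=a*(b\cdot c\cdot d)=(a\cdot b)*(c\cdot d)=e_{1q}*e_{qn}=(a\cdot b\cdot c)*d=e_{1r}*e_{rn}$, so three corner products coincide \emph{as full elements} simultaneously. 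Since $n>4$ forces $\{2,\dots,n-1\}$ to have at least three elements, any two indices $m,m'$ can be completed to such a triple $p<q<r$, whence $(e_{1m}*e_{mn})(1,n)=(e_{1m'}*e_{m'n})(1,n)$ for all $1<m,m'<n$.

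Finally I would pin the common value to $(e_{12}*e_{23})(1,3)$ by applying the reduction of the first step to the triple $(i,k,j)=(1,2,3)$: since $3<n$, we get $(e_{12}*e_{23})(1,3)=((e_{12}\cdot e_{23})*e_{3n})(1,n)=(e_{13}*e_{3n})(1,n)$, which is itself a corner entry. Combining the three steps, every $(e_{ik}*e_{kj})(i,j)$ equals a corner entry, all corner entries are equal, and one of them is exactly $(e_{12}*e_{23})(1,3)$, yielding the claim. The only delicate point is the connectivity of the corner indices for adjacent values, handled by the four-element identity above; everything else is bookkeeping with the entry formulas and interchangeability.
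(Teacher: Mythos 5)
Your proof is correct, but it takes a genuinely different route from the paper's. Both arguments begin with the same normalization moves (left-multiplying by $e_{1i}$ and right-multiplying by $e_{jn}$ via interchangeability), but they then diverge. The paper never passes through corner products: it peels off left factors one at a time, $(e_{1k}*e_{kj})(1,j)=((e_{1,k-1}*e_{k-1,k})\cdot e_{kj})(1,j)=(e_{1,k-1}*e_{k-1,k})(1,k)$, descending until it reaches $(e_{12}*e_{23})(1,3)$ directly, and it handles the stubborn case $(i,k)=(1,2)$, $j=n$ by two successive shifts $(e_{12}*e_{2n})(1,n)=(e_{23}*e_{3n})(2,n)=(e_{34}*e_{4n})(3,n)$, which is precisely where the paper uses $n>4$. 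You instead funnel every entry into the $(1,n)$-entry of a corner product $e_{1m}*e_{mn}$ and invoke the four-factor identity of \cref{(a*b)-cdot-(c*d)-several-forms} with $a=e_{1p}$, $b=e_{pq}$, $c=e_{qr}$, $d=e_{rn}$ to get $e_{1p}*e_{pn}=e_{1q}*e_{qn}=e_{1r}*e_{rn}$ for all $1<p<q<r<n$; for you $n>4$ enters only through the cardinality $|\{2,\dots,n-1\}|\ge 3$ needed to complete any two corner indices to such a triple. Deploying \cref{(a*b)-cdot-(c*d)-several-forms} here (the paper proves it but uses it elsewhere, in \cref{e_ij*e_kl=0-for-j-i+l-k>3}) buys you a strictly stronger intermediate statement, namely equality of the full corner products as elements rather than of single entries, and it eliminates both the iterative peeling and the ad hoc treatment of $(1,2,n)$; the paper's proof, by contrast, gets by with only the two defining interchange identities and entry bookkeeping. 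Your connectivity remark (that the basic two-factor moves fail to link adjacent corner indices) is exactly the right diagnosis of why either the four-factor lemma or the paper's double shift is unavoidable, and all your reductions check out, including the anchoring step $(e_{12}*e_{23})(1,3)=(e_{13}*e_{3n})(1,n)$, which uses $3<n$.
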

\begin{proof}
	\textit{Case 1.} $i>1$. Then
	\begin{align*}
		(e_{ik}*e_{kj})(i,j)=(e_{1i}\cdot(e_{ik}*e_{kj}))(1,j)=(e_{1i}*(e_{ik}\cdot e_{kj}))(1,j)=(e_{1i}*e_{ij})(1,j).
	\end{align*}
	So, it suffices to consider $i=1$.
	
	\textit{Case 2.1.} $i=1$ and $k>2$. Then 
	\begin{align*}
		(e_{1k}*e_{kj})(1,j)=((e_{1,k-1}\cdot e_{k-1,k})*e_{kj})(1,j)=((e_{1,k-1}*e_{k-1,k})\cdot e_{kj})(1,j)
		=(e_{1,k-1}*e_{k-1,k})(1,k).
	\end{align*}
	If $k-1=2$, we are done. Otherwise, applying the same argument, we get
	\begin{align*}
		(e_{1,k-1}*e_{k-1,k})(1,k)=(e_{1,k-2}*e_{k-2,k-1})(1,k-1),
	\end{align*}
	and so on, until we arrive at $(e_{12}*e_{23})(1,3)$. 
	
	\textit{Case 2.2.} $(i,k)=(1,2)$. If $j<n$, then
	\begin{align*}
		(e_{ik}*e_{kj})(i,j)=(e_{12}*e_{2j})(1,j)=((e_{12}*e_{2j})\cdot e_{jn})(1,n)
		=((e_{12}\cdot e_{2j})*e_{jn})(1,n)=(e_{1j}*e_{jn})(1,n),
	\end{align*}
	and we are in the conditions of the Case 2.1. Let $j=n>4$. Then
	\begin{align*}
		(e_{ik}*e_{kj})(i,j)=(e_{12}*e_{2n})(1,n)=(e_{12}*(e_{23}\cdot e_{3n}))(1,n)
		=(e_{12}\cdot (e_{23}*e_{3n}))(1,n)=(e_{23}*e_{3n})(2,n).
	\end{align*}
	Applying the same argument once again, we get $(e_{23}*e_{3n})(2,n)=(e_{34}*e_{4n})(3,n)$, which coincides with $(e_{13}*e_{3n})(1,n)$ by Case 1. Thus, the result again follows by Case 2.1.
\end{proof}

\begin{thrm}\label{interchangeable-UT_n-descr}
	If $n>4$, then the interchangeable structures on $UT_n(K)$ are exactly the associative products that are linear combinations of the structures:
	\begin{enumerate}
		\item\label{e_i-1i*e_jj+1=e_1n-inter} $\I^1_{i,j} (1<i\le n, 1\le j<n):$ $e_{i-1,i}*e_{j,j+1}=e_{1n}$; 
		
		\item\label{e_12*e_ii+2=e_1n_e_ii+1*e_i+1i+2=e_2n} $\I^2_i (1\le i<n-1):$ 
		$e_{12}*e_{i,i+2}=e_{1n}$, $e_{i,i+1}*e_{i+1,i+2}=e_{2n}$;
		
		\item\label{e_ii+2*e_n-1n=e_1n_e_ii+1*e_i+1i+2=e_1n-1} $\I^3_i (1\le i<n-1):$  $e_{i,i+2}*e_{n-1,n}=e_{1n}$, $e_{i,i+1}*e_{i+1,i+2}=e_{1,n-1}$;
		
		\item\label{e_ik*e_kj=e_ij-inter} $\I^4:$ $e_{ik}*e_{kj}=e_{ij}$ for all $1\le i<k<j\le n$.
	\end{enumerate}
	If $n\in\{3,4\}$, then the interchangeable structures on $UT_n(K)$ are exactly the associative products that are linear combinations of the structures \cref{e_i-1i*e_jj+1=e_1n-inter,e_12*e_ii+2=e_1n_e_ii+1*e_i+1i+2=e_2n,e_ii+2*e_n-1n=e_1n_e_ii+1*e_i+1i+2=e_1n-1} above.
\end{thrm}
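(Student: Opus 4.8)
The plan is to mirror the proof of \cref{(12)-matching-UT_n-full-descr}. Given an arbitrary interchangeable structure $*$ on $UT_n(K)$, I would read off every structure constant $e_{ij}*e_{kl}$ from the two preceding subsections and recognize the resulting data as a unique associative linear combination of $\I^1_{i,j}$, $\I^2_i$, $\I^3_i$ and $\I^4$. Since $\{e_{ij}:1\le i<j\le n\}$ is a basis, these constants determine $*$ completely, so no separate ``generated by $e_{i,i+1}$'' argument is needed; instead every pair $(e_{ij},e_{kl})$ falls into exactly one of two regimes, namely the non-chaining case $j\ne k$, governed by the lemmas of \cref{sec-e_j*e_kl-inter}, and the chaining case $j=k$, governed by those of \cref{sec-e_ik*e_kj-inter}.

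First I would dispose of the non-chaining products. By \cref{e_ik*e_kl-in-spn-e_1n} each of them lies in $\spn_K\{e_{1n}\}$, and by \cref{e_ij*e_kl=0-for-j-i+l-k>3} it vanishes as soon as $j-i+l-k>3$; \cref{e_ii+1*e_jj+2=0} and its corollary then eliminate the remaining off-diagonal generator-by-two-step products. What survives are the generator pairs $e_{i,i+1}*e_{j,j+1}$ with $j\ne i+1$, whose coefficients I record as the scalars $\af_{ij}$ of $\I^1$, together with the scalars attached to $e_{12}*e_{i,i+2}$ and to $e_{i,i+2}*e_{n-1,n}$, which will furnish the first relations of $\I^2_i$ and $\I^3_i$ respectively.

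Next I would treat the chaining products $e_{ik}*e_{kj}$. For interior middles, \cref{e_ik*e_kj-in-span-e_ij} collapses them into $\spn_K\{e_{ij}\}$, and for $n>4$ the constant-coefficient lemma forces the single scalar $\eta=(e_{12}*e_{23})(1,3)$ on every diagonal entry; this is exactly $\eta\,\I^4$, and since $\I^4$ is nothing but $\cdot$ itself, the value at $(1,n)$ is covered automatically. The delicate pieces are the short chains $e_{i,i+1}*e_{i+1,i+2}$, which a priori live in $\spn_K\{e_{i,i+2},e_{1,n-1},e_{1n},e_{2n}\}$: their $e_{i,i+2}$-component is $\eta$ by the same lemma, the two evaluation identities of \cref{sec-e_j*e_kl-inter} identify the $e_{2n}$- and $e_{1,n-1}$-components with the $\I^2_i$- and $\I^3_i$-coefficients already named, and the residual $e_{1n}$-component is absorbed into $\af_{i+1,i+1}\,\I^1_{i+1,i+1}$. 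Assembling all of this yields $*=\sum_{i,j}\af_{ij}\,\I^1_{i,j}+\sum_i\gm_i\,\I^2_i+\sum_i\dl_i\,\I^3_i+\eta\,\I^4$.

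The main obstacle is precisely this matching step: a single family of scalars must simultaneously fit both relations of each $\I^2_i$ and each $\I^3_i$, so the boundary components of the short chains are forced to agree with the $e_{1n}$-valued non-chaining products, and it is exactly the two evaluation identities that secure this agreement. The rest is bookkeeping with small $n$: for $n\in\{3,4\}$ there are no interior chains at all, since an interior chain needs $k\ge i+2$ and $j\ge k+2$, hence $j\ge i+4\ge 5$, so the constant-coefficient lemma does not apply and $\I^4$ is redundant (indeed $\I^4=\I^1_{2,2}$ when $n=3$ and $\I^4=\I^3_1+\I^2_2$ when $n=4$); I would record these coincidences and impose the stated index restrictions exactly as done for \cref{(12)-matching-UT_n-full-descr}. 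Finally, the converse is immediate: each $\I^1_{i,j}$, $\I^2_i$, $\I^3_i$ is interchangeable with $\cdot$ by \cref{sec-interchangeable-classes} (using \cref{M^id4_ij-totally-comp} for $\I^1$), and $\I^4=\cdot$ is trivially interchangeable with itself, so every associative linear combination of the four families is again interchangeable.
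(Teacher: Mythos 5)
Your proposal is correct and takes essentially the same route as the paper's proof: since the lemmas of the two preceding subsections cover every basis pair $(e_{ij},e_{kl})$, one reads off all structure constants directly (no generation argument needed), assembles them as $\sum_{i,j}\af_{ij}\I^1_{i,j}+\sum_i\bt_i\I^2_i+\sum_i\gm_i\I^3_i+\eta\I^4$ with the two evaluation identities securing that the $e_{2n}$- and $e_{1,n-1}$-components of the short chains $e_{i,i+1}*e_{i+1,i+2}$ match the coefficients already attached to $e_{12}*e_{i,i+2}$ and $e_{i,i+2}*e_{n-1,n}$, records the small-$n$ coincidences, and quotes the converse from the subsection on classes of interchangeable products. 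As a bonus, your coincidence $\I^4=\I^2_2+\I^3_1$ for $n=4$ is the correct one, whereas the paper's proof states $\I^4=\I^2_2+\I^3_2$ (a typo): one checks directly that $\I^2_2+\I^3_1$ gives exactly $e_{12}*e_{23}=e_{13}$, $e_{12}*e_{24}=e_{14}$, $e_{13}*e_{34}=e_{14}$, $e_{23}*e_{34}=e_{24}$, which is $\I^4$ on $UT_4(K)$.
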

\begin{proof}
	Let $*$ be an interchangeable structure on $UT_n(K)$. Then by the results of \cref{sec-e_j*e_kl-inter} there exist $\{\af_{ij}: 1<i\le n,1\le j<n,i\ne j\}\sst K$, $\{\bt_i: 1\le i\le n-2, i\ne 2\}\sst K$ and $\{\gm_i: 1\le i\le n-2, i\ne 2\}\sst K$ such that
	\begin{align*}
		e_{i-1,i}*e_{j,j+1}&=\af_{ij}e_{1n},\ 1<i\le n,\ 1\le j<n,\ i\ne j,\\
		e_{12}*e_{i,i+2}&=\bt_ie_{1n},\ 1\le i\le n-2,\ i\ne 2,\\
		e_{i,i+2}*e_{n-1,n}&=\gm_ie_{1n},\ 1\le i\le n-2,\ i\ne n-3.
	\end{align*}
	Moreover, by the results of \cref{sec-e_ik*e_kj-inter}, there exist $\{\af_{ii}\}_{i=2}^{n-1}\sst K$ and $\eta\in K$ such that
	\begin{align*}
		e_{ik}*e_{kj}&=\eta e_{ij},\ j>i+2,\ (i,k,j)\not\in\{(1,2,4),(n-3,n-1,n)\},\\
		e_{12}*e_{24}&=\begin{cases}
			\bt_2e_{14}, & n=4,\\
			\bt_2e_{1n}+\eta e_{14}, & n>4,
		\end{cases}\\
		e_{n-3,n-1}*e_{n-1,n}&=\begin{cases}
			\gm_1e_{14}, & n=4,\\
			\gm_{n-3}e_{1n}+\eta e_{n-3,n}, & n>4,
		\end{cases}\\
		e_{i,i+1}*e_{i+1,i+2}&=\begin{cases}
			\gm_ie_{12}+\af_{i+1,i+1}e_{13}+\bt_ie_{23}, & n=3,\\
			\gm_ie_{13}+\af_{i+1,i+1}e_{14}+\bt_ie_{24}, & n=4,\\
			\eta e_{i,i+2}+\gm_ie_{1,n-1}+\af_{i+1,i+1}e_{1n}+\bt_ie_{2n}, & n>5,
		\end{cases}\\
		&\quad\quad 1\le i\le n-2.
	\end{align*}
	Thus, $*$ is the linear combination $\sum_{i,j} \af_{ij}\I^1_{i,j}+\sum_i\bt_i\I^2_i+\sum_i\gm_i\I^3_i+\eta\I^4$. Observe that  $\I^4=\I^1_{2,2}$, whenever $n=3$, and $\I^4=\I^2_2+\I^3_2$, whenever $n=4$.
	
	Conversely, each of the products \cref{e_i-1i*e_jj+1=e_1n-inter,e_12*e_ii+2=e_1n_e_ii+1*e_i+1i+2=e_2n,e_ii+2*e_n-1n=e_1n_e_ii+1*e_i+1i+2=e_1n-1} is interchangeable with $\cdot$ as seen in \cref{sec-interchangeable-classes}. Moreover, \cref{e_ik*e_kj=e_ij-inter} coincides with the original product $\cdot$ on $UT_n(K)$, so it is interchangeable with $\cdot$. Thus, any associative linear combination of \cref{e_i-1i*e_jj+1=e_1n-inter,e_12*e_ii+2=e_1n_e_ii+1*e_i+1i+2=e_2n,e_ii+2*e_n-1n=e_1n_e_ii+1*e_i+1i+2=e_1n-1,e_ik*e_kj=e_ij-inter} is an interchangeable structure on $UT_n(K)$.
\end{proof}

\begin{rem}
	There are $(n-1)^2$ products of the form \cref{e_i-1i*e_jj+1=e_1n-inter}, $n-2$ products of the form \cref{e_12*e_ii+2=e_1n_e_ii+1*e_i+1i+2=e_2n} and $n-2$ products of the form \cref{e_ii+2*e_n-1n=e_1n_e_ii+1*e_i+1i+2=e_1n-1}. In total, there are $n^2-3$ linearly independent products of the form \cref{e_i-1i*e_jj+1=e_1n-inter,e_12*e_ii+2=e_1n_e_ii+1*e_i+1i+2=e_2n,e_ii+2*e_n-1n=e_1n_e_ii+1*e_i+1i+2=e_1n-1} and $1$ product of the form \cref{e_ik*e_kj=e_ij-inter} (which is linearly independent with \cref{e_i-1i*e_jj+1=e_1n-inter,e_12*e_ii+2=e_1n_e_ii+1*e_i+1i+2=e_2n,e_ii+2*e_n-1n=e_1n_e_ii+1*e_i+1i+2=e_1n-1} only for $n>4$).
\end{rem}

\subsection{The complete classification of interchangeable structures on $UT_3(K)$}\label{sec-inter-n=3}

In order to classify the interchangeable structures on $UT_3(K)$ we first specify \cref{interchangeable-UT_n-descr} to the case $n=3$.
\begin{cor}\label{structures-I_ij-n=3}
	The interchangeable structures on $UT_3(K)$ are exactly the associative products that are linear combinations of the structures:
	\begin{multicols}{2}
		\begin{enumerate}
			\item\label{I^1_21-n=3} $\I^1_{2,1}:$ $e_{12}*e_{12}=e_{13}$;
			\item\label{I^_22-n=3} $\I^1_{2,2}:$ $e_{12}*e_{23}=e_{13}$;
			\item\label{I^_31-n=3} $\I^1_{3,1}:$ $e_{23}*e_{12}=e_{13}$;
			\item\label{I^_32-n=3} $\I^1_{3,2}:$ $e_{23}*e_{23}=e_{13}$;
			\item\label{I^2_1-n=3} $\I^2_1:$ 
			$e_{12}*e_{13}=e_{13}$, $e_{12}*e_{23}=e_{23}$;
			\item\label{I^3_1-n=3} $\I^3_1:$  $e_{13}*e_{23}=e_{13}$, $e_{12}*e_{23}=e_{12}$.
		\end{enumerate}
	\end{multicols}
	i.e. the associative products of the form
	\begin{align}\label{interchangeable-UT_3(K)-general-form}
		\begin{array}{rclrclrcl}
			e_{12}*e_{12}&=&\lb_1 e_{13},& e_{12}*e_{23}&=&\lb_6 e_{12}+\lb_2 e_{13}+\lb_5 e_{23}, & e_{23}*e_{12}&=&\lb_3 e_{13},\\
			e_{12}*e_{13}&=&\lb_5 e_{13},& e_{13}*e_{23}&=&\lb_6 e_{13},
			 & e_{23}*e_{23}&=&\lb_4 e_{13}
		\end{array}
	\end{align}
	for some $\{\lb_i\}_{i=1}^6\sst K$.
\end{cor}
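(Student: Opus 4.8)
The plan is to obtain this statement as a direct specialization of \cref{interchangeable-UT_n-descr} to $n=3$. Since $3\in\{3,4\}$, that theorem asserts that every interchangeable structure on $UT_3(K)$ is an associative linear combination of the products $\I^1_{i,j}$, $\I^2_i$ and $\I^3_i$; the product $\I^4$ need not be listed separately, because the theorem records the coincidence $\I^4=\I^1_{2,2}$ for $n=3$.

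First I would run through the index restrictions with $n=3$ substituted. For $\I^1_{i,j}$ the constraints $1<i\le 3$ and $1\le j<3$ leave exactly the four products $\I^1_{2,1}$, $\I^1_{2,2}$, $\I^1_{3,1}$, $\I^1_{3,2}$, while the constraint $1\le i<n-1=2$ admits only $i=1$ both for $\I^2_i$ and for $\I^3_i$, giving $\I^2_1$ and $\I^3_1$. This already reproduces the six structures of the statement. I would then write out their multiplication tables by plugging $n=3$ into the defining relations, using $e_{1n}=e_{13}$, $e_{2n}=e_{23}$ and $e_{1,n-1}=e_{12}$; for instance $\I^2_1$ becomes $e_{12}*e_{13}=e_{13}$, $e_{12}*e_{23}=e_{23}$, and $\I^3_1$ becomes $e_{13}*e_{23}=e_{13}$, $e_{12}*e_{23}=e_{12}$, in agreement with the corollary.

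Finally, I would assemble the general associative linear combination, labelling the coefficients of $\I^1_{2,1}$, $\I^1_{2,2}$, $\I^1_{3,1}$, $\I^1_{3,2}$, $\I^2_1$, $\I^3_1$ by $\lb_1,\dots,\lb_6$ respectively, and collect the contributions to each product $e_{pq}*e_{rs}$. The only product receiving several contributions is $e_{12}*e_{23}$, which picks up $\lb_2 e_{13}$ from $\I^1_{2,2}$, $\lb_5 e_{23}$ from $\I^2_1$ and $\lb_6 e_{12}$ from $\I^3_1$; each of the remaining products is governed by a single basis structure. Reading off the resulting table yields precisely the form \cref{interchangeable-UT_3(K)-general-form}. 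There is no genuine obstacle here: the entire content is the bookkeeping of admissible indices (in particular not double-counting $\I^4$) together with the substitution $n=3$ and the collection of like terms, so the corollary follows immediately from the general theorem.
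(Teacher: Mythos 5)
Your proposal is correct and takes exactly the paper's route: the corollary is stated there without a separate proof, precisely as the specialization of \cref{interchangeable-UT_n-descr} to $n=3$, which is what you carry out. Your index bookkeeping (four products $\I^1_{i,j}$, only $i=1$ for $\I^2_i$ and $\I^3_i$, and the coincidence $\I^4=\I^1_{2,2}$ noted in the theorem's proof) together with the substitutions $e_{1n}=e_{13}$, $e_{2n}=e_{23}$, $e_{1,n-1}=e_{12}$ and the collection of the coefficients $\lb_1,\dots,\lb_6$ reproduces \cref{interchangeable-UT_3(K)-general-form} exactly.
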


\begin{lem}\label{associators-inter-UT_3}
	We have the following non-trivial associators of the basis elements $\{e_{12},e_{23},e_{13}\}$ of $UT_3(K)$ under \cref{interchangeable-UT_3(K)-general-form}:
	\begin{align*}
		\begin{array}{rclrcl}
			{[}e_{12},e_{12},e_{12}{]}&=&-\lb_1\lb_5e_{13},& {[}e_{12},e_{23},e_{13}{]}&=&\lb_5\lb_6e_{13},\\
			{[}e_{12},e_{12},e_{23}{]}&=&-\lb_5\lb_6e_{12}-\lb_5^2e_{23}-2\lb_2\lb_5e_{13}, & {[}e_{23},e_{12},e_{23}{]}&=&-\lb_4\lb_5e_{13},\\
			{[}e_{12},e_{12},e_{13}{]}&=&-\lb_5^2e_{13}, & {[}e_{23},e_{23},e_{23}{]}&=&\lb_4\lb_6e_{13},\\
			{[}e_{12},e_{23},e_{12}{]}&=&\lb_1\lb_6e_{13}, & {[}e_{13},e_{12},e_{23}{]}&=&-\lb_5\lb_6e_{13},\\
			{[}e_{12},e_{23},e_{23}{]}&=&\lb_6^2e_{12}+\lb_5\lb_6e_{23}+2\lb_2\lb_6e_{13}, & {[}e_{13},e_{23},e_{23}{]}&=&\lb_6^2e_{13}.
		\end{array}
	\end{align*}
	In particular, \cref{interchangeable-UT_3(K)-general-form} is associative if and only if $\lb_5=\lb_6=0$.
\end{lem}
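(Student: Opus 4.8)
The plan is to verify all the formulas by a single direct trilinear computation. Since the associator $[a,b,c]=(a*b)*c-a*(b*c)$ is linear in each of its three arguments, it suffices to evaluate it on the $27$ triples $(a,b,c)$ of basis elements drawn from $\{e_{12},e_{23},e_{13}\}$, so the entire proof reduces to finite bookkeeping. First I would record the complete multiplication table of $*$, namely the six products of \cref{interchangeable-UT_3(K)-general-form} together with the three that vanish by our convention, $e_{23}*e_{13}=e_{13}*e_{12}=e_{13}*e_{13}=0$. The key structural feature to exploit is that $e_{13}$ almost annihilates under $*$: left multiplication by $e_{13}$ is nonzero only on $e_{23}$ (giving $\lb_6 e_{13}$) and right multiplication by $e_{13}$ is nonzero only on $e_{12}$ (giving $\lb_5 e_{13}$).

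Next I would organize the $27$ evaluations using the observation that, except for the single pair $(e_{12},e_{23})$, every product $a*b$ is a scalar multiple of $e_{13}$. Consequently, for the left bracketing $(a*b)*c$ with $a*b=\mu e_{13}$ one has $(a*b)*c=\mu(e_{13}*c)$, which survives only when $c=e_{23}$; dually, for the right bracketing $a*(b*c)$ with $b*c=\nu e_{13}$ one has $a*(b*c)=\nu(a*e_{13})$, which survives only when $a=e_{12}$. This immediately collapses the vast majority of the cases to zero, and the only genuinely branching situation is the three-term product $e_{12}*e_{23}=\lb_6 e_{12}+\lb_2 e_{13}+\lb_5 e_{23}$, which must be fed through all three of its components. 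Carrying out this reduction on both bracketings and subtracting produces the ten displayed associators, while the remaining seventeen triples give $0$. As sample checks, $[e_{12},e_{12},e_{12}]=(\lb_1 e_{13})*e_{12}-e_{12}*(\lb_1 e_{13})=-\lb_1\lb_5 e_{13}$, and $[e_{12},e_{12},e_{13}]=(\lb_1 e_{13})*e_{13}-e_{12}*(\lb_5 e_{13})=-\lb_5^2 e_{13}$.

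The computation is entirely routine; the only real care required is to track signs and to keep all three components of $e_{12}*e_{23}$ so that no nonzero contribution is overlooked, which is the sole place where slips are likely. For the final assertion, I note that each of the ten listed associators carries a factor of $\lb_5$ or $\lb_6$, so if $\lb_5=\lb_6=0$ then every associator vanishes and $*$ is associative. Conversely, if $*$ is associative, then in particular $[e_{12},e_{12},e_{13}]=-\lb_5^2 e_{13}=0$ and $[e_{13},e_{23},e_{23}]=\lb_6^2 e_{13}=0$, and since $K$ is a field these force $\lb_5=\lb_6=0$.
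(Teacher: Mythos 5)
Your proposal is correct and matches the paper's approach: the paper omits the proof of this lemma precisely because it is the straightforward trilinear computation you carry out, and your organizing observation (all products except $e_{12}*e_{23}$ land in $\spn_K\{e_{13}\}$, which only survives further multiplication via $e_{13}*e_{23}=\lb_6 e_{13}$ and $e_{12}*e_{13}=\lb_5 e_{13}$) correctly collapses the remaining seventeen triples to zero. Your choice of $[e_{12},e_{12},e_{13}]=-\lb_5^2e_{13}$ and $[e_{13},e_{23},e_{23}]=\lb_6^2e_{13}$ for the converse is also sound, and wisely avoids the coefficients $2\lb_2\lb_5$, $2\lb_2\lb_6$ that would degenerate in characteristic $2$.
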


\begin{prop}\label{interchangeable-UT_3-full-descr}
	Any interchangeable structure on $UT_3(K)$ is isomorphic to exactly one of the following structures:
	\begin{enumerate}
		\item\label{star-for-lb_5=0-lb_6=0-lb_1-ne-0-inter} $e_{12}\star e_{12} = \af e_{13},\ e_{12}\star e_{23} = \bt e_{13},\ e_{23}\star e_{12} = \gm e_{13},\ e_{23}\star e_{23} = e_{13}$;
		\item\label{star-for-lb_5=0-lb_6=0-lb_1=0-lb_4-ne-0-inter} $e_{12}\star e_{12} = e_{13},\ e_{12} \star e_{23} = \af e_{13},\ e_{23} \star e_{12} = \bt e_{13}$;
		\item\label{star-for-lb_5=0-lb_6=0-lb_1=0-lb_4=0-inter} $e_{12}\star e_{23}=\af e_{13}$, $e_{23}\star e_{12}=\bt e_{13}$.
	\end{enumerate}
	In particular, any such structure is totally compatible with $\cdot$.
\end{prop}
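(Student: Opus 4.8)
The plan is to run the same reduction used for the $(12)$-matching case, exploiting the fact that associativity collapses the interchangeable general form onto a purely annihilator-valued product. First I would invoke \cref{structures-I_ij-n=3}: any interchangeable structure $*$ on $UT_3(K)$ is an associative product of the form \cref{interchangeable-UT_3(K)-general-form}, governed by parameters $\lb_1,\dots,\lb_6$. By \cref{associators-inter-UT_3}, associativity is equivalent to $\lb_5=\lb_6=0$. Substituting this back, the multiplication table reduces to $e_{12}*e_{12}=\lb_1e_{13}$, $e_{12}*e_{23}=\lb_2e_{13}$, $e_{23}*e_{12}=\lb_3e_{13}$, $e_{23}*e_{23}=\lb_4e_{13}$, with all remaining products of basis elements equal to zero. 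This is the key simplification: every product now lands in $\spn_K\{e_{13}\}=\Ann(UT_3(K),\cdot)$, and $*$ vanishes as soon as one of its arguments lies in $UT_3(K)^2=\spn_K\{e_{13}\}$. Hence $*$ is exactly a structure of the form \cref{annihilator-valued-structure}, with $V=\spn_K\{e_{12},e_{23}\}$ and $\mu$ the bilinear map sending $(e_{12},e_{12}),(e_{12},e_{23}),(e_{23},e_{12}),(e_{23},e_{23})$ to $\lb_1e_{13},\lb_2e_{13},\lb_3e_{13},\lb_4e_{13}$. By the remark accompanying \cref{annihilator-valued-structure}, such an associative $*$ is automatically totally compatible with $\cdot$, which yields the final ``in particular'' assertion at once.

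For the classification up to isomorphism, I would observe that the reduced table above is literally identical to the $(12)$-matching general form \cref{(12)-matching-UT_3(K)-general-form} specialized to $\lb_5=\lb_6=0$. Since the notion of isomorphism is the same in both settings (it is governed by $\Aut(UT_3(K),\cdot)$), I can apply \cref{(12)-matching-UT_3(K)-new-base} with $\lb_5=\lb_6=0$ to read off the action of an automorphism $\phi$ from \cref{aut-UT_3-on-e_12_e_23_e_13}: it gives $e_{12}\star e_{12}=a_{11}^{-1}a_{22}\lb_1e_{13}$, $e_{12}\star e_{23}=\lb_2e_{13}$, $e_{23}\star e_{12}=\lb_3e_{13}$, $e_{23}\star e_{23}=a_{11}a_{22}^{-1}\lb_4e_{13}$. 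Thus $\lb_2,\lb_3$ and the product $\lb_1\lb_4$ are invariants, while $\lb_1$ and $\lb_4$ can be rescaled by inverse factors. This is precisely Case~3 of the proof of \cref{(12)-matching-UT_3-full-descr}, so the normalization there applies verbatim: $\lb_4\ne0$ gives family \cref{star-for-lb_5=0-lb_6=0-lb_1-ne-0-inter}, $\lb_4=0\ne\lb_1$ gives \cref{star-for-lb_5=0-lb_6=0-lb_1=0-lb_4-ne-0-inter}, and $\lb_1=\lb_4=0$ gives \cref{star-for-lb_5=0-lb_6=0-lb_1=0-lb_4=0-inter}. Non-isomorphism between the three families follows from the invariance of the vanishing of $\lb_1$ and of $\lb_4$, and within each family from the invariance of the surviving parameters.

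There is essentially no hard obstacle here; the only point requiring care is the bookkeeping observation that the reduced interchangeable table \emph{coincides as a product} with the reduced $(12)$-matching table, so that both the automorphism computation and the subsequent normalization can be imported rather than recomputed. One should also dispatch the converse direction — that each listed structure is genuinely associative and interchangeable — which is immediate, since each is a special case of \cref{interchangeable-UT_3(K)-general-form} with $\lb_5=\lb_6=0$, hence associative by \cref{associators-inter-UT_3} and interchangeable by \cref{structures-I_ij-n=3}.
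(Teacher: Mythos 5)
Your proposal is correct and follows essentially the same route as the paper's own proof: reduce via \cref{associators-inter-UT_3} to $\lb_5=\lb_6=0$, observe that the resulting product is exactly \cref{(12)-matching-UT_3(K)-general-form} with $\lb_5=\lb_6=0$, and import Case~3 of \cref{(12)-matching-UT_3-full-descr} for the normalization and non-isomorphism. Your explicit justification of the ``totally compatible'' claim via \cref{annihilator-valued-structure} is a welcome detail that the paper leaves implicit, but it does not change the argument.
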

\begin{proof}
	Let $*$ be an interchangeable structure on $UT_3(K)$. By \cref{structures-I_ij-n=3} the product $*$ is of the form \cref{interchangeable-UT_3(K)-general-form}, where $\lb_5=\lb_6=0$ thanks to \cref{associators-inter-UT_3}. So, $*$ is exactly the structure \cref{(12)-matching-UT_3(K)-general-form} with $\lb_5=\lb_6=0$. Such structures were classified in Case 3 of \cref{(12)-matching-UT_3-full-descr}, they are given in \cref{star-for-lb_5=0-lb_6=0-lb_4-ne-0-(12),star-for-lb_5=0-lb_6=0-lb_4=0-lb_1-ne-0-(12),star-for-lb_5=0-lb_6=0-lb_1=0-lb_4=0-(12)} of \cref{(12)-matching-UT_3-full-descr}. So, we just copy them here.
\end{proof}

\begin{cor}
	The totally compatible structures on $UT_3(K)$ are exactly \cref{star-for-lb_5=0-lb_6=0-lb_1-ne-0-inter,star-for-lb_5=0-lb_6=0-lb_1=0-lb_4-ne-0-inter,star-for-lb_5=0-lb_6=0-lb_1=0-lb_4=0-inter} of \cref{interchangeable-UT_3-full-descr}.\footnote{This can also be seen as a particular case of \cref{totally-comp-UT_n-descr} below.}
\end{cor}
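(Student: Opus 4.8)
The plan is to derive this corollary immediately from \cref{interchangeable-UT_3-full-descr} together with the characterization of total compatibility recalled at the end of the first subsection of \cref{sec-prelim}. Recall from there that a product $*$ is totally compatible with $\cdot$ if and only if $*$ is both interchangeable with $\cdot$ and $\sg$-matching with $\cdot$ for some $\sg\in S_2$. In particular, every totally compatible structure is interchangeable, and this is the only nontrivial implication I will need to invoke.

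First I would establish the forward inclusion. If $*$ is a totally compatible structure on $UT_3(K)$, then by the equivalence above $*$ is in particular interchangeable with $\cdot$, hence by \cref{interchangeable-UT_3-full-descr} it is isomorphic to exactly one of the three structures \cref{star-for-lb_5=0-lb_6=0-lb_1-ne-0-inter,star-for-lb_5=0-lb_6=0-lb_1=0-lb_4-ne-0-inter,star-for-lb_5=0-lb_6=0-lb_1=0-lb_4=0-inter}. For the converse, I would simply cite the final assertion of \cref{interchangeable-UT_3-full-descr}, which already states that each of these three families is totally compatible with $\cdot$. (Alternatively, one can see this directly: each of the listed products takes values only in $\spn_K\{e_{13}\}=\Ann(UT_3(K),\cdot)$, while all of its arguments lie in the complement $\spn_K\{e_{12},e_{23}\}$ of $UT_3(K)^2=\spn_K\{e_{13}\}$, so every such product is an instance of the annihilator-valued structure \cref{annihilator-valued-structure} and is therefore totally compatible by the Remark following \cref{annihilator-valued-structure}.)

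Combining the two directions shows that on $UT_3(K)$ the totally compatible structures coincide exactly with the interchangeable ones, which are precisely \cref{star-for-lb_5=0-lb_6=0-lb_1-ne-0-inter,star-for-lb_5=0-lb_6=0-lb_1=0-lb_4-ne-0-inter,star-for-lb_5=0-lb_6=0-lb_1=0-lb_4=0-inter} of \cref{interchangeable-UT_3-full-descr}, and the pairwise non-isomorphism already verified there carries over verbatim. There is no real obstacle here: the substantive work was done in proving \cref{interchangeable-UT_3-full-descr}, and the only point requiring care is to use the equivalence ``totally compatible $\iff$ interchangeable and $\sg$-matching for some $\sg$'' to obtain the inclusion of totally compatible structures into the interchangeable ones, the reverse inclusion being supplied by the ``in particular'' clause of that proposition.
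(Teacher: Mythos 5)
Your proposal is correct and follows exactly the route the paper intends: the corollary is left without explicit proof precisely because it is the trivial implication ``totally compatible $\impl$ interchangeable'' combined with the final clause of \cref{interchangeable-UT_3-full-descr} (``in particular, any such structure is totally compatible with $\cdot$''), which is what you spell out. Your alternative verification via the annihilator-valued construction \cref{annihilator-valued-structure} likewise matches the paper's own mechanism (cf.\ \cref{M^id4_ij-totally-comp} and the footnote pointing to \cref{totally-comp-UT_n-descr}), so there is nothing to add.
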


\section{Totally compatible structures on $UT_n(K)$}\label{sec-totally-comp}

We are finally ready to characterize totally compatible structures on $UT_n(K)$. 
\begin{thrm}\label{totally-comp-UT_n-descr}
	If $n>3$, then the totally compatible structures on $UT_n(K)$ are exactly the linear combinations of the structures:
	\begin{enumerate}
		\item\label{e_i-1i*e_jj+1=e_1n-totally-comp} $\T^1_{i,j} (1<i\le n, 1\le j<n):$ $e_{i-1,i}*e_{j,j+1}=e_{1n}$; 
		
		\item\label{e_ik*e_kj=e_ij-totally-comp} $\T^2:$ $e_{ik}*e_{kj}=e_{ij}$ for all $1\le i<k<j\le n$.
	\end{enumerate}
	If $n=3$, then the totally compatible structures on $UT_n(K)$ are exactly the linear combinations of the structures \cref{e_i-1i*e_jj+1=e_1n-totally-comp} above.
\end{thrm}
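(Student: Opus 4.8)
The plan is to exploit the equivalence recorded just after \cref{matching-ass-identity}: a product is totally compatible with $\cdot$ exactly when it is interchangeable with $\cdot$ and $\sg$-matching for some $\sg\in S_2$. Taking $\sg=\id$, a totally compatible structure is precisely a product that is simultaneously $\id$-matching and interchangeable with $\cdot$. I will prove the two inclusions separately, the second being the substantial one.

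For the inclusion ``$\supseteq$'', I first note that each $\T^1_{i,j}=\M^{\id,4}_{i,j}$ is totally compatible with $\cdot$ by \cref{M^id4_ij-totally-comp}, and $\T^2=\cdot$ is trivially totally compatible with itself. Since the defining identities \cref{totally-comp-ass-identity} (with $\cdot_1=\cdot$ fixed) are linear in the second product, every linear combination $*=\eta\,\cdot+\sum_{i,j}\lb_{i,j}\T^1_{i,j}$ again satisfies them. It remains to see that such a $*$ is associative. Writing $a*b=\eta(a\cdot b)+\ell(a,b)e_{1n}$, where $\ell(a,b)$ only reads off the ``width-one'' coordinates $a(k,k+1),b(l,l+1)$ [each $\T^1_{i,j}$ being $\Ann$-valued, cf.\ \cref{annihilator-valued-structure}], both $(a*b)*c$ and $a*(b*c)$ collapse to $\eta^2(a\cdot b\cdot c)$: indeed $a\cdot b\in UT_n(K)^2$ has no width-one coordinate and $e_{1n}\in\Ann(UT_n(K),\cdot)$, so the $\ell$-terms vanish under a further multiplication. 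Hence every such combination is a genuine totally compatible structure, which is why the statement needs no associativity hypothesis.

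For ``$\subseteq$'', let $*$ be totally compatible; being $\id$-matching, it is determined by its values on the generators $e_{i,i+1}$ by \cref{*_1=*_2-on-generators}, so it suffices to compute $e_{i,i+1}*e_{j,j+1}$. If $j\ne i+1$ then $e_{i,i+1}\cdot e_{j,j+1}=0$, whence $e_{i,i+1}*e_{j,j+1}\in\Ann(UT_n(K),\cdot)=\spn_K\{e_{1n}\}$ by \cref{a-cdot-b=0=>a*b-in-Ann(A)} (using interchangeability). If $j=i+1$, I intersect the $\id$-matching constraint of \cref{e_ii+1*e_jj+1-possible-cases} with the interchangeable constraint $e_{i,i+1}*e_{i+1,i+2}\in\spn_K\{e_{i,i+2},e_{1,n-1},e_{1n},e_{2n}\}$ from \cref{sec-e_ik*e_kj-inter}; generically these two spans meet only in $\spn_K\{e_{i,i+2},e_{1n}\}$. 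At the boundary indices, where the spans share extra matrix units, I kill the offending coordinates by total compatibility, e.g. $(e_{i,i+1}*e_{i+1,i+2})(2,n)=\bigl((e_{12}*e_{i,i+1})\cdot e_{i+1,i+2}\bigr)(1,n)=0$ because $e_{12}*e_{i,i+1}\in\spn_K\{e_{1n}\}$ for $i\ne2$, with the symmetric computation (via \cref{from-*-to-star}) disposing of the $e_{1,n-1}$-coordinate. The value lemma of \cref{sec-e_ik*e_kj-inter} shows the surviving $e_{i,i+2}$-coordinate equals one fixed scalar $\eta=(e_{12}*e_{23})(1,3)$ for all $i$. Consequently $e_{i,i+1}*e_{j,j+1}=\eta\,(e_{i,i+1}\cdot e_{j,j+1})+c_{i,j}e_{1n}$ for scalars $c_{i,j}$, which is exactly the action on generators of $\eta\,\T^2+\sum_{i,j}c_{i,j}\T^1_{i+1,j}$; by \cref{*_1=*_2-on-generators}, $*$ equals this combination. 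Finally, for $n=3$ one checks $\T^2=\T^1_{2,2}$ (both are $\id$-matching and agree on generators), so $\T^2$ is redundant and $*$ is a combination of the $\T^1_{i,j}$ alone, matching the stated special case.

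The main obstacle is the last step's boundary bookkeeping: for $i\in\{2,n-2\}$ and for the small values $n\in\{3,4\}$ several of the units $e_{i,i+2},e_{1,n-1},e_{1n},e_{2n}$ (together with $e_{1,i+2},e_{in}$ coming from the $\id$-matching span) collide, so the coordinate-killing identities and the verification that the $e_{i,i+2}$-coordinates genuinely share the common value $\eta$ must be checked case by case rather than read off the generic intersection.
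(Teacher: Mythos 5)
Your proposal is correct in strategy and takes a genuinely different route from the paper. The paper's proof first invokes the full classification of interchangeable structures (\cref{interchangeable-UT_n-descr}), subtracts the part already known to be totally compatible (the $\I^1_{i,j}$ components and $\eta\I^4$), and then kills the remaining coefficients $\bt_i,\gm_i$ with two short computations of the shape $e_{12}\cdot(e_{i,i+1}*e_{i+1,i+2})=(e_{12}\cdot e_{i,i+1})*e_{i+1,i+2}$ and its mirror, with an explicit $n=4$ subcase in which the residue turns out to be a multiple of $\cdot$. You instead work directly on generators: total compatibility equals $\id$-matching plus interchangeability, \cref{*_1=*_2-on-generators} reduces everything to the products $e_{i,i+1}*e_{j,j+1}$, \cref{a-cdot-b=0=>a*b-in-Ann(A)} settles $j\ne i+1$, and for $j=i+1$ you intersect the $\id$-matching span of \cref{e_ii+1*e_jj+1-possible-cases} with the interchangeable span from \cref{sec-e_ik*e_kj-inter}. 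This buys independence from \cref{interchangeable-UT_n-descr} itself (you need only two span lemmas, the value lemma of \cref{sec-e_ik*e_kj-inter}, and the annihilator lemma), at the price of boundary bookkeeping that the paper's subtraction trick bypasses entirely. Your explicit check that every linear combination is automatically associative is also a genuine improvement in completeness: unlike \cref{id-matching-UT_n-full-descr,(12)-matching-UT_n-full-descr}, the present theorem carries no associativity hypothesis, and the paper leaves that converse point implicit.

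One local repair is needed, however. Your sample coordinate-killing computation justifies $(e_{i,i+1}*e_{i+1,i+2})(2,n)=0$ by appealing to $e_{12}*e_{i,i+1}\in\spn_K\{e_{1n}\}$ \emph{for} $i\ne 2$; but for $i\ne 2$ the unit $e_{2n}$ does not survive the intersection of the two spans in the first place (it enters the $\id$-matching span only as $e_{in}$ with $i=2$), so your computation is vacuous where it is not needed and inapplicable at the single index $i=2$ where the $(2,n)$-coordinate genuinely persists. The fix is a one-liner of the same flavour: for $i=2$ and $n>4$, total compatibility gives $(e_{23}*e_{34})(2,n)=\bigl((e_{12}*e_{23})\cdot e_{34}\bigr)(1,n)$, and $e_{12}*e_{23}\in\spn_K\{e_{13},e_{1n}\}$ by case \cref{e_i_i+1*e_j_j+1-1=i<j+1<n} of \cref{e_ii+1*e_jj+1-possible-cases}, so $(e_{12}*e_{23})\cdot e_{34}\in\spn_K\{e_{14}\}$ and the $(1,n)$-entry vanishes; the involution of \cref{from-*-to-star} then disposes of the $(1,n-1)$-coordinate at $i=n-3$. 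Likewise, for $n=4$ the value lemma of \cref{sec-e_ik*e_kj-inter} is stated only for $n>4$, so the equality of the two diagonal coefficients must be checked directly from $(e_{12}*e_{23})\cdot e_{34}=e_{12}\cdot(e_{23}*e_{34})$. You flagged all of these collisions honestly and the method does carry through each of them, so I read this as an incomplete writeup rather than a wrong approach; but as written, the $i=2$ case, which is the only genuinely problematic one, is not covered by the argument you actually give.
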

\begin{proof}
	Let $*$ be a product on $UT_n(K)$ that is totally compatible with $\cdot$. In particular, $*$ is interchangeable with $\cdot$, so $*$ is a linear combination
	\begin{align}\label{sum_ij-af_ijI^1_ij+sum_i-bt_iI^2_i+sum_i-gm_iI^3_i+dlI^4}
		\sum_{i,j}\af_{ij}\I^1_{i,j}+\sum_{i=1}^{n-2} \bt_i\I^2_i+\sum_{i=1}^{n-2}\gm_i\I^3_i+\eta\I^4
	\end{align}
	by \cref{interchangeable-UT_n-descr}. However, $\I^1_{i,j}$ is totally compatible with $\cdot$ for all $i,j$ as a structure of the form \cref{annihilator-valued-structure} and $\I^4$ coincides with $\cdot$, so it is also totally compatible with $\cdot$. So, subtracting $\sum_{i,j}\af_{ij}\I^1_{i,j}+\eta\I^4$, one can assume that $\af_{ij}=\eta=0$ in \cref{sum_ij-af_ijI^1_ij+sum_i-bt_iI^2_i+sum_i-gm_iI^3_i+dlI^4} (by doing so one can a priori lose the associativity, but \cref{totally-comp-ass-identity} will be preserved). Thus, it is enough to consider $*$ of the form
	\begin{align*}
		e_{12}*e_{i,i+2}=\bt_ie_{1n},\ e_{i,i+1}*e_{i+1,i+2}=\bt_ie_{2n}+\gm_ie_{1,n-1},\ e_{i,i+2}*e_{n-1,n}=\gm_ie_{1n},\ 1\le i\le n-2.
	\end{align*}
	Given such a $*$, we have
	\begin{align*}
		e_{12}\cdot(e_{i,i+1}*e_{i+1,i+2})=e_{12}\cdot(\bt_ie_{2n}+\gm_ie_{1,n-1})=\bt_ie_{1n}
	\end{align*}
	for all $i$. On the other hand, if $n\ne 4$, then $(e_{12}\cdot e_{i,i+1})*e_{i+1,i+2}=0$ yielding $\bt_i=0$ for all $i$; and if $n=4$, then $(e_{12}\cdot e_{12})*e_{23}=0$ and $(e_{12}\cdot e_{23})*e_{34}=\gm_1e_{14}$ yielding $\bt_1=0$ and $\bt_2=\gm_1$. Similarly,
	\begin{align*}
		(e_{i,i+1}*e_{i+1,i+2})\cdot e_{n-1,n}=(\bt_ie_{2n}+\gm_ie_{1,n-1})\cdot e_{n-1,n}=\gm_ie_{1n}
	\end{align*}
	for all $i$. On the other hand, if $n\ne 4$, then $e_{i,i+1}*(e_{i+1,i+2}\cdot e_{n-1,n})=0$ yielding $\gm_i=0$ for all $i$; and if $n=4$, then $e_{12}*(e_{23}\cdot e_{n-1,n})=\bt_2e_{1n}$ and $e_{23}*(e_{34}\cdot e_{n-1,n})=0$ yielding $\gm_1=\bt_2$ and $\gm_2=0$. Thus, $*$ is zero for $n\ne 4$. For $n=4$ we have $\bt_1=\gm_2=0$ and $\bt_2=\gm_1=:\af$, in which case $*$ is of the form
	\begin{align*}
		e_{12}*e_{24}=\af e_{14},\ e_{12}*e_{23}=\af e_{13},\ e_{23}*e_{34}=\af e_{24},\ e_{13}*e_{34}=\af e_{14},
	\end{align*}
	i.e., $*$ is a multiple of the original product $\cdot$.
	
	Observe that in the case $n=3$ we have $\T^2=\T^1_{2,2}$, so the linear combinations of $\T^1_{i,j}$ already give all the totally compatible structures on $UT_n(K)$.
\end{proof}

\begin{rem}
	There are $(n-1)^2$ structures of the form \cref{e_i-1i*e_jj+1=e_1n-totally-comp}. Thus, the number of linearly independent totally compatible structures on $UT_n(K)$ is either $(n-1)^2+1$ (if $n>3$) or $(n-1)^2$ (if $n=3$).
\end{rem}

	\section*{Acknowledgements}
	
	The author thanks the referee for pointing out typos and for the suggestions that helped to improve the clarity of the paper.
	
	\section*{Declarations}
	
	
	\subsection*{Data availability statement}
	Data sharing is not applicable to this article as no datasets were generated or analysed during the current study.
	
	\subsection*{Conflict of interest statement}
	Declarations of interest: none.

	\bibliography{bibl}{}
	\bibliographystyle{acm}
	
\end{document}